\mathchardef\ordinarycolon\mathcode`\:
\def\vcentcolon{\mathrel{\mathop\ordinarycolon}}
\DeclareFontFamily{U}{mathx}{\hyphenchar\font45}
\DeclareFontShape{U}{mathx}{m}{n}{<5> <6> <7> <8> <9> <10> %
<10.95> <12> <14.4> <17.28> <20.74> <24.88> mathx10}{}
\DeclareSymbolFont{mathx}{U}{mathx}{m}{n}
\DeclareMathAccent{\widecheck}{0}{mathx}{"71}
\theoremstyle{plain}
\newtheorem{theorem}{Theorem}[section]
\newtheorem{lemma}[theorem]{Lemma}
\newtheorem{proposition}[theorem]{Proposition}
\newtheorem{corollary}[theorem]{Corollary}
\theoremstyle{definition}
\newtheorem{definition}[theorem]{Definition}
\newtheorem{example}[theorem]{Example}
\newtheorem{notation}[theorem]{Notation}
\newtheorem{remark}[theorem]{Remark}
\newtheorem{question}[theorem]{Question}
\newenvironment{mylist}%
{\begin{list}{}%
{\leftmargin 4.75em\labelwidth 3.5em\rightmargin 0.8em%
\topsep 0.75ex\itemsep 0.25ex}}%
{\end{list}}
{\begin{list}{}{}}{\end{list}}
\let\origthebibliography=\thebibliography
\def\thebibliography{\renewcommand{\section}[2]{}\origthebibliography}
\newcommand{\bop}[2]%
{\ifthenelse{\equal{#2}{}}{\bopp( #1 )}{\bopp( #1; #2 )}}
\newcommand{\bopp}{B}
\newcommand{\borel}{\mathcal{B}}
\newcommand{\bra}[1]{\langle#1|}
\newcommand{\elltwo}{L^2( \R_+; \mul )}
\newcommand{\esssup}{\mathop{\mathrm{ess\ sup}}}
\newcommand{\evec}[1]{\evecc(#1)}
\newcommand{\evecc}{\varepsilon}
\newcommand{\evecs}{\mathcal{E}}
\newcommand{\expn}{\mathbb{E}}
\newcommand{\fock}{\mathcal{F}}
\newcommand{\hilb}{\mathsf{H}}
\newcommand{\hlf}{\mbox{$\frac12$}}
\newcommand{\im}{\mathop{\mathrm{im}}}
\newcommand{\ket}[1]{|#1\rangle}
\newcommand{\mul}{\mathsf{k}}
\renewcommand{\Pr}{\mathbb{P}}
\newcommand{\probsp}{( \samplesp, \salg, \Pr )}
\newcommand{\qsto}{\Rightarrow}
\newcommand{\rd}{\mathrm{d}}
\newcommand{\salg}{\mathcal{A}}
\newcommand{\samplesp}{\Omega}
\newcommand{\sexp}[1]{\ssexp(#1)}
\newcommand{\ssexp}{\zeta}
\newcommand{\std}{\,\rd}
\newcommand{\stlim}{\mathop{\mathrm{st.lim}}\limits}
\newcommand{\Vac}{\Omega}
\newcommand{\wc}[1]{\widecheck{#1}}
\newcommand{\wh}[1]{\widehat{#1}}
\renewcommand{\C}{\mathbb{C}}
\newcommand{\N}{\mathbb{N}}
\newcommand{\R}{\mathbb{R}}
\newcommand{\Z}{\mathbb{Z}}
\renewcommand{\ge}{\geqslant}
\renewcommand{\le}{\leqslant}
\newcommand{\Cf}{\textit{Cf.\ }}
\newcommand{\cf}{\textit{cf.\ }}
\newcommand{\eg}{\textit{e.g., }}
\newcommand{\ie}{\textit{i.e., }}
\newcommand{\tu}[1]{\textup{#1}}
\numberwithin{equation}{section}
\begin{document}

\begin{center}
{\LARGE On stopping Fock-space processes}

\vspace*{1ex}
{\large Alexander C.~R.~Belton}\\[0.5ex]
{\small Department of Mathematics and Statistics\\
Lancaster University, United Kingdom\\[0.5ex]
\textsf{a.belton@lancaster.ac.uk}}\qquad%
{\small Tuesday 31st July 2018}
\end{center}

\begin{abstract}\noindent
We consider the theory of stopping bounded processes within the
framework of Hudson--Parthasarathy quantum stochastic calculus, for
both identity and vacuum adaptedness. This provides significant new
insight into Coquio's method of stopping
(J.~Funct.\ Anal.~238:149--180, 2006). Vacuum adaptedness is required
to express certain quantum stochastic representations, and many
results, including the proof of the optional-sampling theorem, take a
more natural form.
\end{abstract}

{\footnotesize\textit{Key words:}
quantum stopping time; quantum stop time; quantum stochastic calculus;
regular quantum semimartingale; regular $\Omega$-semimartingale.
}

{\footnotesize\textit{MSC 2010:} %
81S25 (primary);   
46L53,             
60G40 (secondary). 
}

\section{Introduction}

The extension of the notion of stopping time from classical to
non-commutative probability is straightforward, with the earliest
definition in the literature due to Hudson \cite{Hud79}. The idea was
developed in the setting of the Clifford probability gauge space by
Barnett and Lyons \cite{BaL86}, and for abstract filtered von~Neumann
algebras by Barnett and Thakrar \cite{BaT87,BaT90}, and
Barnett and Wilde \cite{BaW90,BaW93}; see also \cite{Sav88}, where
Sauvageot initiated a programme to solve a $C^*$-algebraic version of
the Dirichlet problem, and recent work by {\L}uczak \cite{Luc05}.

In the more concrete setting of Hudson--Parthasarathy quantum
stochastic calculus~\cite{HuP84}, an extensive theory was developed by
Parthasarathy and Sinha \cite{PaS87}. In particular, they showed that,
given a quantum stopping time~$S$, the Boson Fock space
$\fock = \fock_+\bigl( \elltwo \bigr)$ has the factorisation
$\fock_{S)} \otimes \fock_{[S}$, where the spaces $\fock_{S)}$ and
$\fock_{[S}$ are pre-$S$ and post-$S$ spaces; this provides a form of
the strong Markov property that generalises Hudson's result
\cite{Hud79}. Further contributions in this setting have been made by
Meyer \cite{Mey87}, Accardi and Sinha \cite{AcS89}, Attal and Sinha
\cite{AtS98}, Sinha~\cite{Sin03}, Attal and Coquio \cite{AtC04},
Coquio \cite{Coq06} and Hudson \cite{Hud07,Hud10}. Quantum stopping
times are applied to the CCR flow and its cocycles in \cite{BeS14} and
\cite{BeS16}, building on work of Applebaum \cite{App88}.
Parthasarathy and his collaborators developed the theory around
another quantum-probabilistic version of the Dirichlet problem:
see~\cite{AtP95}, \cite{BhP95}, \cite{AtP96} and~\cite{Par03}.

Here, we investigate stopping in parallel for both identity-adapted
and vacuum-adapted versions of quantum stochastic calculus; the latter
variant was introduced in~\cite{Blt01} and further developed
in~\cite{Blt04}. By developing the two forms of the theory in
parallel, we provide insights which illuminate Coquio's constructions
in~\cite{Coq06} and allow us to extend her results. Certain integrals,
which seem somewhat mysterious from the identity-adapted viewpoint,
can be seen to be products due to switching forms of
adaptedness. Furthermore, the vacuum-adapted theory is seen to have
superior properties, which correspond more naturally to the classical
situation. For simplicity and clarity of exposition, we restrict our
attention to processes composed of bounded operators.

In the Fock-space context, a quantum stopping time $S$ is a
projection-valued measure on the extended half~line $[ 0, \infty ]$,
such that $S\bigl( [ 0, \infty ] \bigr) = I$, the identity operator on
$\fock$, and $t \mapsto S\bigl( [ 0, t ] \bigr)$ is an
identity-adapted process, \ie
$S\bigl( [ 0, t ] \bigr) \in \bop{\fock_{t)}}{} \otimes I_{[t}$ for
all $t \ge 0$. (Here the Boson Fock space~$\fock$ is identified with
$\fock_{t)} \otimes \fock_{(t}$; this is the familiar deterministic
version of the Parthasarathy--Sinha factorisation.)
Section~\ref{sec:qst} presents this definition, derives some basic
results from it and sets out some classical and non-commutative
examples.

The primary object associated with a quantum stopping time $S$ is its
time projection $E_S$, and this is introduced in
Section~\ref{sec:timeproj}; if~$S$ is deterministic, so
that~$S\bigl( \{ t \} ) = I$ for some $t$, then $E_S$ is the
orthogonal projection $E_t = I_{t)} \otimes P^\Vac_{[t}$ onto
$\fock_{t} \otimes \evec{0}$, where here $\evec{0}$ is the vacuum
vector in $\fock_{[t}$. If $\fock$ is identified with the $L^2$~space
of a standard Brownian motion with filtration
$\salg = ( \salg_t )_{t \in \R_+}$ and~$S$ corresponds to a classical
$\salg$-stopping time $\tau$ then $E_S$ becomes the conditional
expectation with respect to the stopping time $\sigma$-algebra
$\salg_\tau$. The time projection $E_S$  has a quantum stochastic
representation (Theorem~\ref{thm:qstint}):
\begin{equation}\label{eqn:integral_rep}
E_S = I + \int_0^\infty I_\mul \otimes %
S\bigl( ( s, \infty ] \bigr) E_s \std \Lambda_s.
\end{equation}
The integrand is a vacuum-adapted process, which shows that this form
of adaptedness appears naturally when considering quantum stopping
times.

If $S$ is a quantum stopping time and $X = ( X_t )_{t \in \R_+}$ is a
process (\ie a family of operators on~$\fock$ satisfying suitable
measurability and adaptedness conditions) then there are three natural
approaches to stopping $X$ at $S$: these are
\begin{align*}
S \cdot X & := S\bigl( \{ 0 \} \bigr) X_0 + %
\lim_\pi \sum_{j = 0}^\infty S\bigl( ( \pi_{j - 1}, \pi_j ] \bigr) %
X_{\pi_j}, \tag*{(left)} \\[1ex]
X \cdot S & := X_0 S\bigl( \{ 0 \} \bigr) + %
\lim_\pi \sum_{j = 0}^\infty X_{\pi_j} %
S\bigl( ( \pi_{j - 1}, \pi_j ] \bigr) \tag*{(right)} \\[1ex]
\text{and} \qquad S \cdot X \cdot S & := %
S\bigl( \{ 0 \} \bigr) X_0 S\bigl( \{ 0 \} \bigr) + \lim_\pi %
\sum_{j = 0}^\infty S\bigl( ( \pi_{j - 1}, \pi_j ] \bigr)
X_{\pi_j} S\bigl( ( \pi_{j - 1}, \pi_j ] \bigr), \tag*{(double)}
\end{align*}
assuming these limits, taken over partitions of $\R_+$, exist in an
appropriate sense. (To establish convergence is often difficult, or
apparently impossible in general.) The time projection $E_S$ is the
result of stopping the vacuum-adapted process $( E_t )_{t \in \R_+}$
in any of these three senses. Each of the expressions yields the same
orthogonal projection, and convergence holds in the strong operator
topology, since these projections form an decreasing family as the
partition is refined: see Theorem~\ref{thm:expS} below.

In the vacuum-adapted setting, the value at time $t$ of a martingale
$M$ closed by the operator~$M_\infty$ is simply $E_t M_\infty E_t$;
see Section~\ref{sec:processes} for the definitions of martingales in
this context. Thus it is natural to define $M_{\wc{S}}$, the value of
the martingale $M$ stopped in a vacuum-adapted manner at $S$, to be
$E_S M_\infty E_S$; it is easy to see that this equals $S \cdot M
\cdot S$, the result of double stopping $M$ at $S$. The issue of
convergence becomes that of the existence of $E_S$, which is long
established, and this definition has various good properties: see
Sections~\ref{sec:algebras} and~\ref{sec:closedmgs} below. In
particular, the optional-sampling theorem,
Theorem~\ref{thm:vacoptional}, holds, and is a immediate consequence
of the identity $E_S \wedge E_T = E_{S \wedge T}$, which is true for
any two quantum stopping times~$S$ and $T$
(Theorem~\ref{thm:exporder}).

In \cite{Coq06}, Coquio puts forward a method of stopping for
identity-adapted processes which is not obviously one of the forms
given above. She begins by working with discrete stopping times, \ie
those with finite support: if $T$ is a quantum stopping time with
support $\{ t_1 < \cdots < t_n \} \subseteq \R_+$ and $X$ is a process
then
\begin{equation}\label{eqn:iddiscstop}
X_{\wh{T}} := \sum_{i, j = 1}^n \wh{\pi}( E_{t_i} )_{t_i \vee t_j} %
T\bigl( \{ t_i \} \bigr) %
\wh{\pi}( X_{t_i \vee t_j} )_{t_i \vee t_j} T\bigl( \{ t_j \} \bigr) %
\wh{\pi}( E_{t_j} )_{t_i \vee t_j}
\end{equation}
is the result of applying identity-adapted stopping to $X$ at $T$,
where $\wh{\pi}$ is the projection onto the space of identity-adapted
processes. For example, the operator
$\wh{\pi}( E_s )_t = I_{s)} \otimes P^\Vac_{[ s, t )} \otimes I_{[t}$
maps $\fock_{[ s, t )}$ onto the vacuum subspace and acts as
the identity on $\fock_{s)}$ and $\fock_{[t}$, with $\fock$ identified
with $\fock_{s)} \otimes \fock_{[ s, t )} \otimes \fock_{[t}$. (This
notation is explained further in Section~\ref{sec:processes}.) In
Section~\ref{sec:discrete} the vacuum-adapted version of this
definition is introduced and various consequences are derived; in
particular, Lemma~\ref{lem:discmg} shows that, for a closed
martingale, it agrees with the natural definition of $M_{\wc{T}}$
described above. In particular, Coquio's method is seen to be a form
of double stopping, at least for a large class of processes.

Working from the definition (\ref{eqn:iddiscstop}), Coquio obtains an
integral formula for the stopped process $M_{\wh{T}}$ when~$M$ is a
closed martingale (see Theorem~\ref{thm:idclosedmg}), and uses this to
extend the definition of $M_{\wh{T}}$ to arbitrary stopping times. The
key step in this result, Theorem~\ref{thm:idstopop} below, is to show
that, given any quantum stopping time $S$ and any $Z \in
\bop{\fock}{}$, the sum
\[
E_S Z E_S + \int_0^\infty I_\mul \otimes %
S\bigl( [ 0, s ] \bigr) \wh{\pi}\bigl( E_S Z E_S \bigr)_s %
S\bigl( [ 0, s ] \bigr) \std \Lambda_s
\]
extends to a bounded operator $Z_{\wh{S}}$; we provide a somewhat
shorter version of Coquio's proof in Section~\ref{sec:closedmgs}. It
follows that the difference between stopping a closed martingale $M$
at an arbitrary quantum stopping time $S$ in the identity-adapted and
vacuum-adapted senses is given by a gauge integral:
\[
M_{\wh{S}} - M_{\wc{S}} = \int_0^\infty I_\mul \otimes 
S\bigl( [ 0, s ] \bigr) \wh{\pi}\bigl( M_{\wc{S}} \bigr)_s %
S\bigl( [ 0, s ] \bigr) \std \Lambda_s;
\]
in particular, the integral in Coquio's definition of $M_{\wh{S}}$ can
be seen as an artifact produced by working with identity, rather than
vacuum, adaptedness.

In Section~\ref{sec:FV}, vacuum-adapted stopping for FV processes and
for semimartingales is extended from discrete to arbitrary times. An
FV process $Y$ is of the form $Y_t = \int_0^t H_s \std s$ for some
integrand process $H$, and a semimartingale is the sum of a martingale
and an FV process.

Given sufficient regularity, a semimartingale may be written as the
sum of four quantum stochastic integrals. Such a process is called a
regular quantum semimartingale if identity adapted, and a regular
$\Omega$-semimartingale if vacuum adapted. The integral formula
(\ref{eqn:integral_rep}) for~$E_S$ is used in Section~\ref{sec:vqsm}
to show that the class of regular $\Omega$-semimartingales is closed
under vacuum-adapted stopping (Theorem~\ref{thm:vacsmg}); Coquio has
obtained the analogous result for regular quantum semimartingales
\cite[Proposition~3.16]{Coq06}.

In the final Section~\ref{sec:postS}, it is shown that the projection
onto the future space may be stopping in the three different senses,
and that a natural relationship holds between these stopped
operators. Each has a vacuum-adapted quantum stochastic
representation.

An appendix, Section~\ref{sec:qsc}, is included to gather the
necessary results on quantum stochastic integration.

\subsection{Acknowledgement}

The author is grateful for the referee's comments on a previous
version of this work.

\subsection{Notation and conventions}

The term ``increasing'' applies in the weak sense. Hilbert spaces have
complex scalar fields; inner products are linear in the second
argument. The indicator function of the set $A$ is denoted by
$1_A$. The complement of an orthogonal projection $P$ is denoted by
$P^\perp$. The set of natural numbers $\N = \{ 1, 2, 3, \ldots \}$,
the set of non-negative integers $\Z_+ = \N \cup \{ 0 \}$ and the set
of non-negative real numbers $\R_+ = [ 0, \infty )$. The von~Neumann
algebra of bounded operators on a Hilbert space $\hilb$ is denoted by
$\bop{\hilb}{}$. Given a real number $x$, its ceiling
$\lceil x \rceil$ is the smallest integer at least as great as~$x$.

\section{Quantum stopping times}\label{sec:qst}

\begin{notation}
Let $\fock_A = \fock_+\bigl( L^2( A; \mul ) \bigr)$ denote Boson
Fock space over $L^2( A; \mul )$, where $A$ is a subinterval of~$\R_+$
and $\mul$ is a complex Hilbert space. For brevity, let
$\fock := \fock_{\R_+}$, $\fock_{t)} := \fock_{[ 0, t )}$ for all
$t \in ( 0, \infty )$ and~$\fock_{[t} := \fock_{[ t, \infty )}$ for
all $t \in \R_+$, with similar abbreviations $I$, $I_{t)}$
and~$I_{[t}$ for the identity operators on these spaces.

The set of exponential vectors $\{ \evec{f} : f \in \elltwo \}$ is
total in $\fock$ and linearly independent, with
$\langle \evec{f}, \evec{g} \rangle = \exp \langle f, g \rangle$ for
all $f$, $g \in \elltwo$; we let~$\evecs$ denote the linear span of this
set.
\end{notation}

\begin{definition}[{\cite[Section~3]{PaS87}}]\label{def:qst}
A \emph{spectral measure} on $\borel[ 0, \infty ]$, the Borel
$\sigma$-algebra on the extended half-line
$[ 0, \infty ] := \R_+ \cup \{ \infty \}$, is a map
\[
S : \borel[ 0, \infty ] \to \bop{\hilb}{},
\]
where $\hilb$ is a complex Hilbert space, such that
\begin{mylist}
\item[(i)] the operator $S( A )$ is an orthogonal projection for all
$A \in \borel[ 0, \infty ]$;
\item[(ii)] the mapping
$\borel[ 0, \infty ] \to \C; \ %
A \mapsto \langle x, S( A ) y \rangle$
is a complex measure for all~$x$,~$y \in \fock$;
\item[(iii)] the total measure $S\bigl( [ 0, \infty ] \bigr) = I$.
\end{mylist}
The spectral measure $S$ is a \emph{quantum stopping time} if
$\hilb = \fock$ and $S$ is identity adapted in the following sense:
\begin{mylist}
\item[(iv)] the operator $S\bigl( \{ 0 \} \bigr) \in \C I$ and
$S\bigl( [ 0, t ] \bigr) \in \bop{\fock_{t)}}{} \otimes I_{[t}$ for
all $t \in ( 0, \infty )$.
\end{mylist}
\end{definition}

\begin{remark}[{\Cf\cite[Definitions~3.1]{BaL86}}]
A spectral measure on $\borel[ 0, \infty ]$ may also be defined to be
an increasing family of orthogonal projections
$( S_t )_{t \in [ 0, \infty ]}$ in~$\bop{\hilb}{}$ such that
$S_\infty = I$. The equivalence of these definitions is ensured by the
spectral theorem for unbounded self-adjoint operators.
\end{remark}

\begin{proposition}\label{prp:qst}
Let $S$ be a spectral measure on $\borel[ 0, \infty ]$, and let
$A$ and $B$ be arbitrary elements of $\borel[ 0, \infty ]$.
\begin{mylist}
\item[\tu{(i)}] The operator $S( \emptyset ) = 0$.

\item[\tu{(ii)}] If $A \subseteq B$ then
$S( A ) S( B ) = S( A ) = S( B ) S( A )$.

\item[\tu{(iii)}] If $A$ and $B$ are disjoint then
$S( A \cup B ) = S( A ) + S( B )$ and $S( A ) S( B ) = 0$.

\item[\tu{(iv)}] In general,
$S( A ) S( B ) = S( A \cap B ) = S( B ) S( A )$.
\end{mylist}
\end{proposition}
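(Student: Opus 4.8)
The plan is to prove the four parts of Proposition~\ref{prp:qst} using only the three defining axioms of a spectral measure: that each $S(A)$ is an orthogonal projection, that $A \mapsto \langle x, S(A) y \rangle$ is a complex measure for all $x, y$, and that $S([0,\infty]) = I$. The key tool throughout will be the polarisation-free observation that two bounded operators $P, Q$ on $\hilb$ coincide if and only if $\langle x, P y \rangle = \langle x, Q y \rangle$ for all $x, y \in \hilb$; this lets me transfer the scalar countable-additivity of the measures in (ii) to operator identities.

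First I would prove (i) directly: since $A \mapsto \langle x, S(A) y \rangle$ is a complex measure, it assigns $0$ to the empty set, so $\langle x, S(\emptyset) y \rangle = 0$ for all $x, y$, whence $S(\emptyset) = 0$. Next, for (iii), disjointness and countable (hence finite) additivity of the scalar measures give $\langle x, S(A \cup B) y \rangle = \langle x, S(A) y \rangle + \langle x, S(B) y \rangle$ for all $x, y$, so $S(A \cup B) = S(A) + S(B)$. The crucial step here is to deduce $S(A) S(B) = 0$: since $S(A)$, $S(B)$ and $S(A \cup B)$ are all orthogonal projections, I square the identity $S(A) + S(B) = S(A \cup B)$, using $P^2 = P$ for each projection, to obtain $S(A) S(B) + S(B) S(A) = 0$; multiplying this relation on the left by $S(A)$ and separately on the right by $S(A)$ and comparing (or invoking the standard fact that a sum of two projections is a projection precisely when their ranges are orthogonal) yields $S(A) S(B) = S(B) S(A) = 0$.

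Part (ii) then follows by writing $B = A \cup (B \setminus A)$ as a disjoint union when $A \subseteq B$: by (iii), $S(B) = S(A) + S(B \setminus A)$ with $S(A) S(B \setminus A) = 0$, so multiplying by $S(A)$ on either side and using $S(A)^2 = S(A)$ gives $S(A) S(B) = S(A) = S(B) S(A)$. Finally, for the general statement (iv), I decompose $A = (A \cap B) \cup (A \setminus B)$ and $B = (A \cap B) \cup (B \setminus A)$ into disjoint unions and expand $S(A) S(B)$ using (iii); the cross terms $S(A \setminus B) S(A \cap B)$, $S(A \cap B) S(B \setminus A)$ and $S(A \setminus B) S(B \setminus A)$ all vanish by the orthogonality from (iii) applied to the relevant disjoint sets, leaving only $S(A \cap B)^2 = S(A \cap B)$, and symmetrically for $S(B) S(A)$.

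The main obstacle is the orthogonality claim $S(A) S(B) = 0$ in part (iii), since the measure axiom (ii) only delivers the \emph{additive} relation $S(A \cup B) = S(A) + S(B)$ directly; the vanishing of the product is a genuinely operator-algebraic fact that must be extracted from the projection property by the squaring argument above. Everything else is a routine manipulation of disjoint decompositions once (iii) is in hand, and no use of the adaptedness axiom~(iv) is needed, since the proposition concerns arbitrary spectral measures.
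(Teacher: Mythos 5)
Your proof is correct, but it inverts the logical order of the paper's argument. The paper proves (ii) first, directly from additivity: writing $S(B) = S(A) + S(B \setminus A) \ge S(A)$, it invokes the standard fact that an operator inequality between orthogonal projections forces $S(A)S(B) = S(A) = S(B)S(A)$; the orthogonality half of (iii) is then a one-line consequence, since $S(A) = S(A)S(A \cup B) = S(A)^2 + S(A)S(B)$ gives $S(A)S(B) = 0$. You instead establish the orthogonality in (iii) \emph{first}, by squaring $S(A) + S(B) = S(A \cup B)$ to get the anticommutator relation $S(A)S(B) + S(B)S(A) = 0$ and then multiplying by $S(A)$ on each side to conclude $S(A)S(B) = S(B)S(A) = 0$; part (ii) then follows from (iii) by pure algebra. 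Both routes are sound. What your version buys is that it is entirely algebraic: it uses only idempotence and additivity, and never appeals to the partial order on projections (a fact which, strictly speaking, itself requires a short positivity argument and which the paper uses without comment). What the paper's version buys is brevity, and a slightly leaner (iv): it decomposes only $S(B) = S(A \cap B) + S(B \setminus A)$, whereas you decompose both $S(A)$ and $S(B)$ and must check that three cross terms vanish rather than one.
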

\begin{proof}
This is a simple exercise.
\end{proof}

\begin{lemma}\label{lem:stmeas}
If $S : \borel[ 0, \infty ] \to \bop{\hilb}{}$ is a spectral measure
then $t \mapsto S\bigl( [ 0, t ] \bigr) x$ is right
continuous on $\R_+$, so strongly measurable, for all $x \in \hilb$.
\end{lemma}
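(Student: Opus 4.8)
The plan is to reduce strong measurability of the map $\phi : t \mapsto S\bigl( [ 0, t ] \bigr) x$ to the elementary fact that a bounded increasing real function has only countably many discontinuities. First I would record the key increment identity. For $s \le t$ the set $[ 0, t ]$ is the disjoint union $[ 0, s ] \cup ( s, t ]$, so Proposition~\ref{prp:qst}(iii) gives $S\bigl( ( s, t ] \bigr) = S\bigl( [ 0, t ] \bigr) - S\bigl( [ 0, s ] \bigr)$, and since this difference is an orthogonal projection,
\[
\bigl\| S\bigl( [ 0, t ] \bigr) x - S\bigl( [ 0, s ] \bigr) x \bigr\|^2
= \langle x, S\bigl( ( s, t ] \bigr) x \rangle = g( t ) - g( s ),
\]
where $g( t ) := \langle x, S\bigl( [ 0, t ] \bigr) x \rangle = \| S\bigl( [ 0, t ] \bigr) x \|^2$.

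Next I would observe that $g$ is an increasing function on $\R_+$ bounded above by $\| x \|^2$: monotonicity is exactly the positivity of the quantity $\langle x, S\bigl( ( s, t ] \bigr) x \rangle$ just computed, while the bound follows from $S\bigl( [ 0, t ] \bigr) \le S\bigl( [ 0, \infty ] \bigr) = I$. A bounded increasing function has at most countably many discontinuities; let $D \subseteq \R_+$ denote this countable set. The increment identity (which reads $\|\phi(t)-\phi(s)\|^2 = |g(t)-g(s)|$ for either ordering of $s$ and $t$) shows that $\phi$ is strongly continuous at precisely the points where $g$ is continuous, so $\phi$ is continuous on the co-countable set $\R_+ \setminus D$.

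Finally I would conclude strong measurability. Since $\phi$ is continuous at every point of $\R_+ \setminus D$, the preimage under $\phi$ of any open subset of $\hilb$ is the union of a relatively open subset of $\R_+ \setminus D$ and a subset of the countable set $D$, hence is Borel; thus $\phi$ is Borel measurable. Moreover the range of $\phi$ is separable, being the union of the countable set $\phi( D )$ with the continuous image $\phi( \R_+ \setminus D )$ of a separable metric space. Borel measurability together with separable range yields strong measurability, by Pettis's theorem. (Equivalently, one may exhibit $\phi$ as the pointwise norm-limit of the countably-valued step functions obtained by evaluating $\phi$ at the right-hand dyadic endpoints of a refining sequence of partitions, using the strong right-continuity of $\phi$ that follows from continuity from above of the projection-valued measure.)

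I do not anticipate a serious obstacle: the only point requiring a little care is verifying that $\phi$ inherits continuity from $g$ outside $D$, which is immediate from the increment identity once it is checked for both orderings of the arguments.
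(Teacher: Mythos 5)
Your proposal is correct and follows essentially the same route as the paper: the same increment identity $\| S\bigl( [ 0, t ] \bigr) x - S\bigl( [ 0, s ] \bigr) x \|^2 = \| S\bigl( ( s, t ] \bigr) x \|^2 = g(t) - g(s)$, monotonicity of the scalar function, countability of its discontinuity set, and transfer of continuity to the vector-valued map. The only difference is that you spell out the final step (continuity off a countable set implies strong measurability, via Borel measurability, separable range and Pettis's theorem), which the paper leaves implicit.
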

\begin{proof}
Let $x \in \hilb$ and note that if $s$, $t \in \R_+$ with $s < t$ then
\[
\| S\bigl( [ 0, t ] ) x - S\bigl( [ 0, s ] \bigr) x \|^2 = %
\| S\bigl( ( s, t ] \bigr) x \|^2 = %
\langle x, S\bigl( ( s, t ] \bigr) x \rangle.
\]
Thus if $t_n \to s+$ then, since $\cap_n ( s, t_n ] = \emptyset$, so
$S\bigl( [ 0, t_n ] \bigr) x \to S\bigl( [ 0, s ] \bigr) x$. Hence
setting
\[
f_n : \R_+ \to \hilb; \ t \mapsto 1_{[ 0, n ]}( t ) %
S\bigl( [ 0, 2^{-n} \lceil 2^n t \rceil ] \bigr) x %
\qquad \text{for all } n \in \N
\]
gives a sequence of simple functions converging
pointwise to $t \mapsto S\bigl( [ 0, t ] \bigr) x$.
\end{proof}

The following result will be applied without comment.

\begin{corollary}\label{cor:meas}
Let $S : \borel[ 0, \infty ] \to \bop{\hilb}{}$ be a spectral measure.
\begin{mylist}
\item[\tu{(i)}] If $f : \R_+ \to \hilb$ is strongly measurable then so
is $t \mapsto S\bigl( [ 0, t ] \bigr) f( t )$.

\item[\tu{(ii)}] If $F : \R_+ \to \bop{\hilb}{}$ is pointwise strongly
measurable, so that $t \mapsto F( t ) x$ is strongly measurable for
all $x \in \hilb$, then so is
$t \mapsto F( t ) S\bigl( [ 0, t ] \bigr)$.
\end{mylist}
\end{corollary}
\begin{proof}
For all $n \ge 1$, let
\[
S_n( t ) := 1_{[ 0, n ]}( t ) %
S\bigl( [ 0, 2^{-n} \lceil 2^n t \rceil ] \bigr) = \left\{ %
\begin{array}{ll}
S\bigl( \{ 0 \} \bigr) & \mbox{if } t = 0, \\[1ex]
S\bigl( [ 0, j 2^{-n} ] \bigr) & \mbox{if } t \in %
\bigl( ( j - 1 ) 2^{-n}, j 2^{-n} ] \ %
( j = 1, \ldots, n 2^n ), \\[1ex]
0 & \mbox{if } t \in ( n, \infty ).
\end{array}\right.
\]
(i) If $N \subseteq \R_+$ is a Lebesgue-null set and
$( f_n )_{n \ge 1}$ is a sequence of simple functions such that
$\| f_n( t ) - f( t ) \| \to 0$ for all $t \in \R_+ \setminus N$ then
$t \mapsto S_n( t ) f_n( t )$ is a simple function and
\[
\| S_n( t ) f_n( t ) - S\bigl( [ 0, t ] \bigr) f( t ) \| \le %
\| f_n( t ) - f( t ) \| + %
\| ( S_n( t ) - S\bigl( [ 0, t ] \bigr) ) f( t ) \| \to 0
\]
if $t \in \R_+ \setminus N$.

(ii) Let $x \in \hilb$ and note that $t \mapsto F( t ) S_n( t ) x$ is
the sum of finitely many strongly measurable functions, so is
thus itself strongly measurable.
Since~$F( t ) S_n( t ) x \to F( t ) S\bigl( [ 0, t ] \bigr) x$ for
all~$t \in \R_+$, the claim follows.
\end{proof}

\begin{example}
For any $t \in [ 0, \infty ]$, setting
\[
t : \borel[ 0, \infty ] \to \bop{\fock}{}; \ %
A \mapsto \left\{ \begin{array}{cl}
 0 & \text{if } t \not\in A, \\[1ex]
 I & \text{if } t \in A
\end{array}\right.
\]
defines a quantum stopping time which corresponds to the deterministic
time~$t$.
\end{example}

\begin{example}\label{eg:classicalst}
Let $B = ( B_t )_{t \in \R_+}$ be a standard Brownian motion and use
the Wiener--It\^o--Segal transform to identify 
the Fock space $\fock$, where $\mul = \C$, with the space
$L^2\probsp$. If $\tau$ is a classical stopping time for $B$ then
\[
S : \borel[ 0, \infty ] \to \bop{\fock}{}; \ %
A \mapsto 1_{\{ \tau \in A \}}
\]
is a quantum stopping time, where the function $1_{\{ \tau \in A \}}$
acts by multiplication on $L^2\probsp$. The same applies with $B$
replaced by any classical process with the chaotic representation
property, \eg the classical or monotone Poisson processes, or
Az\'ema's martingale: see \cite[Section~II.1]{Att98}.
\end{example}

\begin{example}[{\cite[pp.508--510]{AtC04}}]\label{eg:poissonst}
Let $\nu = ( \nu_t )_{t \in \R_+}$ be a standard Poisson process with
intensity~$1$ and unit jumps on the probability space $\probsp$, where
$\salg$ is complete and generated by $\nu$; for all $n \in \Z_+$, let
\[
\tau_n := \inf\{ t \in \R_+ : \nu_t = n \}
\]
be the $n$th jump time. The fact that
\[
\int_0^t \phi_s( \omega ) \std \nu_s( \omega ) = %
\sum_{k = 1}^{\nu_t( \omega )} \phi_{\tau_k( \omega )}( \omega ) %
\qquad \text{for all } \omega \in \samplesp,
\]
where $\phi$ is any process, together with the identity
$\{ \nu_t \ge n \} = \{ \tau_n \le t \}$, implies that
\[
\int_0^t \bigl( 1_{\{ \tau_{n - 1} < s \}}( \omega ) - %
1_{\{ \tau_n < s \}}( \omega ) \bigr) \std \nu_s( \omega ) = %
\int_0^t 1_{( \tau_{n - 1}( \omega ), \tau_n( \omega ) ]}( s ) %
\std \nu_s( \omega ) = 1_{\{ \tau_n \le t \}}( \omega )
\]
for all $\omega \in \samplesp$, $n \in \N$ and $t \in \R_+$. Since
$( \nu_t - t )_{t \in \R_+}$ is a normal martingale and $\tau_m$ has
the gamma distribution with mean and variance $m$, it holds that
\[
\expn\Bigl[ \Bigl( \int_0^t 1_{\{ \tau_m = s \}} \std( \nu_s - s ) %
\Bigr)^2 \Bigr] = %
\expn\Bigl[ \int_0^t 1_{\{ \tau_m = s \}} \std s \Bigr] = %
\int_0^t \Pr( \tau_m = s ) \std s = 0
\]
and therefore, as elements of $L^2\probsp$,
\begin{equation}
1_{\{ \tau_n \le t \}} = %
\int_0^t \bigl( 1_{\{ \tau_{n - 1} \le s \}} - %
1_{\{ \tau_n \le s \}} \bigr) \std \nu_s \qquad \text{for all } %
t \in \R_+.
\end{equation}
With $\mul = \C$, let $N_t = \Lambda_t + A_t + A^\dagger_t + t I$ for
all $t \in \R_+$, so that $N$ is the usual quantum stochastic
representation of the Poisson process $\nu$: there exists an isometric
isomorphism $U_P : L^2\probsp \to \fock$ such that $U_P^* N_t U_P$ is
essentially self adjoint, with closure corresponding to multiplication
by $\nu_t$, for all $t \in \R_+$, and~$U_P 1 = \evec{0}$
\cite[Theorems~6.1--2]{HuP84}. It follows from Lemma~\ref{lem:poisint}
that
\begin{equation}\label{eqn:poissonst}
\int_0^t T_n\bigl( [ 0, s ] \bigr) \std N_s = %
U_P \int_0^t 1_{\{ \tau_n \le s \}} \std \nu_s \, U_P^* %
\qquad \text{on } \evecs
\end{equation}
for all $t \in \R_+$, where the quantum stopping time $T_n$ is defined
by setting
\[
T_n( A ) := U_P 1_{\{ \tau_n \in A \}} U_P^* \qquad \text{for all } %
n \in \Z_+ \text{ and } A \in \borel[ 0, \infty ],
\]
as in Example~\ref{eg:classicalst}, and the stochastic integral on the
right-hand side of (\ref{eqn:poissonst}) acts by multiplication
on~$L^2\probsp$. In particular, $T_n$ satisfies the quantum stochastic
differential equation
\begin{equation}
T_n\bigl( [ 0, t ] \bigr) = \int_0^t \Bigl( %
T_{n - 1}\bigl( [ 0, s ] \bigr) - T_n\bigl( [ 0, s ] \bigr) %
\Bigr) \std N_s %
\qquad \text{for all } t \in \R_+ \text{ and } n \in \N.
\end{equation}
\end{example}

\begin{definition}%
[{\cite[Definitions~3.1]{BaL86}, \cite[Section~3]{PaS87}}]
A partial order is defined on quantum stopping times in the following
manner: if $S$ and $T$ are quantum stopping times then $S \le T$ if
and only if~$S\bigl( [ 0, t ] \bigr) \ge T\bigl( [ 0, t ] \bigr)$ for
all~$t \in [ 0, \infty ]$. The definition agrees with the classical
ordering in Examples~\ref{eg:classicalst} and~\ref{eg:poissonst}; in
the latter, $T_m \le T_n$ for all $m$, $n \in \Z_+$ such that
$m \le n$.
\end{definition}

\begin{theorem}\label{thm:qstorder}
If $S$ is a quantum stopping time and $s \in [ 0, \infty ]$ then
$S \wedge s$ is a quantum stopping time, where
\[
( S \wedge s )\bigl( [ 0, t ] \bigr) = \left\{ \begin{array}{cl}
 S\bigl( [ 0, t ] ) & \text{if } t < s, \\[1ex]
 I & \text{if } t \ge s
\end{array}\right.
\]
for all $t \in [ 0, \infty ]$. Furthermore, if $s$,
$t \in [ 0, \infty ]$ with $s \le t$ then
\[
S \wedge s \le S \wedge t \le S, \qquad S \wedge s \le s %
\qquad \text{and} \qquad ( S \wedge t ) \wedge s = S \wedge s.
\]
\end{theorem}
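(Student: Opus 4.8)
The plan is to handle the two assertions in turn: first that $S \wedge s$ is a quantum stopping time, then the order relations, in each case reducing to a short comparison of cumulative projections according to the position of the relevant time relative to $s$ and $t$.

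For the first assertion I would avoid checking conditions (i)--(iii) of Definition~\ref{def:qst} by hand and instead invoke the alternative characterisation in the remark following that definition: it is enough to show that $\bigl( ( S \wedge s )\bigl( [ 0, t ] \bigr) \bigr)_{t \in [ 0, \infty ]}$ is an increasing family of orthogonal projections with value $I$ at $\infty$. Each value is either $S\bigl( [ 0, t ] \bigr)$ or $I$ and so is an orthogonal projection; monotonicity in $t$ comes from Proposition~\ref{prp:qst}(ii) when both times lie below $s$ and is trivial otherwise, as $I$ dominates every projection; and the value is $I$ for every $t \ge s$. To see identity adaptedness, condition~(iv), note that $( S \wedge s )\bigl( \{ 0 \} \bigr)$ equals $S\bigl( \{ 0 \} \bigr) \in \C I$ if $s > 0$ and equals $I \in \C I$ if $s = 0$, while for $t \in ( 0, \infty )$ the operator $( S \wedge s )\bigl( [ 0, t ] \bigr)$ is either $S\bigl( [ 0, t ] \bigr) \in \bop{\fock_{t)}}{} \otimes I_{[t}$ (when $t < s$, by the adaptedness of $S$) or $I = I_{t)} \otimes I_{[t}$ (when $t \ge s$), and in both cases lies in $\bop{\fock_{t)}}{} \otimes I_{[t}$.

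For the order relations I would unwind the definition of $\le$, recalling that $U \le V$ means $U\bigl( [ 0, u ] \bigr) \ge V\bigl( [ 0, u ] \bigr)$ for all $u$, and compare cumulative projections pointwise, splitting on whether $u$ lies below $s$, in $[ s, t )$, or above $t$. The inequality $S \wedge s \le S \wedge t$ then reduces to $I \ge S\bigl( [ 0, u ] \bigr)$ on the middle range and to equalities elsewhere; $S \wedge t \le S$ reduces to $I \ge S\bigl( [ 0, u ] \bigr)$ for $u \ge t$ and to equalities for $u < t$; and $S \wedge s \le s$ follows by comparison with the deterministic time $s$, whose cumulative projection is $0$ for $u < s$ and $I$ for $u \ge s$. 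The identity $( S \wedge t ) \wedge s = S \wedge s$ is again a pointwise check: for $u < s \le t$ both sides equal $S\bigl( [ 0, u ] \bigr)$, and for $u \ge s$ both equal $I$.

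The argument is essentially bookkeeping and presents no serious obstacle; the points that need care are invoking the correct direction of the order relation — cumulative projections decrease as the stopping time increases — and not overlooking the boundary case $s = 0$ when verifying that $( S \wedge s )\bigl( \{ 0 \} \bigr) \in \C I$.
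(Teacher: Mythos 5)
Your proposal is correct: the paper's own proof of Theorem~\ref{thm:qstorder} is literally ``This is a straightforward exercise'', and your case-by-case verification --- the increasing-projection-family characterisation from the remark after Definition~\ref{def:qst}, identity adaptedness including the boundary case $s = 0$, and the pointwise comparison of cumulative projections with the order correctly reversed --- is precisely that exercise carried out. The one tacit point is that the cited equivalence between spectral measures and increasing families of projections rests on strong right-continuity of $t \mapsto S\bigl( [ 0, t ] \bigr)$ (automatic for spectral measures, by weak countable additivity and monotonicity); the family $t \mapsto ( S \wedge s )\bigl( [ 0, t ] \bigr)$ inherits this from $S$ on $[ 0, s )$ and is constant, equal to $I$, on $[ s, \infty ]$, so your appeal to that remark is sound.
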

\begin{proof}
This is a straightforward exercise.
\end{proof}

\begin{remark}%
[{\Cf\cite[Lemma~2.3]{BaT87},\cite[Proposition~3.1]{PaS87}}]
Quantum stopping times $S \wedge T$ and~$S \vee T$ are
defined for any pair of quantum stopping times~$S$ and $T$ by setting
\begin{align*}
( S \wedge T )\bigl( [ 0, t ] \bigr) & = %
S\bigl( [ 0, t ] \bigr) \vee T\bigl( [ 0, t ] \bigr) \\[1ex]
\text{and} \quad %
( S \vee T )\bigl( [ 0, t ] \bigr) & = %
S\bigl( [ 0, t ] \bigr) \wedge T\bigl( [ 0, t ] \bigr) %
\qquad \text{for all } t \in [ 0, \infty ].
\end{align*}
It is straightforward to verify that
$t \mapsto ( S \wedge T )\bigl( [ 0, t ] \bigr)$ and
$t \mapsto ( S \vee T )\bigl( [ 0, t ] \bigr)$ are increasing.
Furthermore, $S \wedge T \le S \le S \vee T$ and
$S \wedge T \le T \le S \vee T$.

If $S\bigl( [ 0, t ] \bigr)$ commutes with $T\bigl( [ 0, t ] \bigr)$
then
\begin{align*}
( S \wedge T )\bigl( [ 0, t ] \bigr) & = %
S\bigl( [ 0, t ] \bigr) + T\bigl( [ 0, t ] \bigr) - %
S\bigl( [ 0, t ] \bigr) T\bigl( [ 0, t ] \bigr) \\[1ex]
\text{and} \quad ( S \vee T )\bigl( [ 0, t ] ) & = %
S\bigl( [ 0, t ] \bigr) T\bigl( [ 0, t ] \bigr),
\end{align*}
for all $t \in [ 0, \infty ]$.
\end{remark}

\section{Time projections}\label{sec:timeproj}

\begin{definition}
Let $E_0 \in \bop{\fock}{}$ be the orthogonal projection onto
$\C \evec{0}$, let~$E_t \in \bop{\fock}{}$ be the orthogonal
projection onto~$\fock_{t)} \otimes \evec{0|_{[ t, \infty )}}$,
considered as a subspace of~$\fock$, for all $t \in ( 0, \infty )$ and
let $E_\infty := I$. Then
\[
E_t \evec{f} = \evec{1_{[ 0, t )} f} %
\qquad \text{for all } t \in [ 0, \infty ] \text{ and } f \in \elltwo.
\]
\end{definition}

Given a quantum stopping time $S$, the \emph{time projection}
\[
E_S = \int_{[ 0, \infty ]} S( \rd s ) E_{s+}
= \int_{[ 0, \infty ]} E_{s+} S( \rd s ),
\]
where these integrals are strongly convergent limits of Riemann sums:
see Theorem~\ref{thm:expS}. Note that left, right and double stopping
$E = ( E_t )_{t \in [ 0, \infty ]}$ at $S$ produce the same result, as
observed in the Introduction.

\begin{definition}
A strictly increasing sequence $\pi = ( \pi_j )_{j \in \Z_+}$ with
$\pi_0 = 0$ and~$\lim\limits_{j \to \infty} \pi_j = \infty$ is said to
be a \emph{partition} of $\R_+$. A partition~$\pi'$ is a
\emph{refinement} of~$\pi$ if~$\pi$ is a subsequence of~$\pi'$.
\end{definition}

The following theorem dates back at least as far as
\cite[Theorem~2.3]{BaT90}.

\begin{theorem}\label{thm:expS}
Let $S$ be a quantum stopping time and let $\pi$ be a partition
of~$\R_+$. The series
\begin{equation}\label{eqn:es}
E_S^\pi := S\bigl( \{ 0 \} \bigr) E_0 + \sum_{j = 1}^\infty %
S\bigl( ( \pi_{j - 1}, \pi_j ] \bigr) E_{\pi_j} + %
S\bigl(\{ \infty \} \bigr)
\end{equation}
converges in the strong operator topology to an orthogonal
projection. If $\pi'$ is a refinement of $\pi$ then
$E_S^{\pi'} \le E_S^\pi$, so $E_S := \stlim_\pi E_S^\pi$ exists and is
an orthogonal projection such that $E_S \le E_S^\pi$ for all $\pi$.
Furthermore, it holds that
$E_S S\bigl( [ 0, t ] \bigr) = S\bigl( [ 0, t ] \bigr) E_S$ for all
$t \in [ 0, \infty ]$.
\end{theorem}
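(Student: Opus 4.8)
The plan is to exhibit $E_S^\pi$ as a strongly convergent sum of \emph{mutually orthogonal} projections, so that convergence, self-adjointness and idempotence all follow together. Write $F_0 := S(\{0\})E_0$, $F_j := S\bigl((\pi_{j-1},\pi_j]\bigr)E_{\pi_j}$ for $j \ge 1$ and $F_\infty := S(\{\infty\})$, so that $E_S^\pi = \sum_j F_j$. First I would note that each summand is an orthogonal projection: by Definition~\ref{def:qst}, $S\bigl((\pi_{j-1},\pi_j]\bigr) = S\bigl([0,\pi_j]\bigr) - S\bigl([0,\pi_{j-1}]\bigr)$ lies in $\bop{\fock_{\pi_j)}}{} \otimes I_{[\pi_j}$, whereas $E_{\pi_j} = I_{\pi_j)} \otimes P^\Vac_{[\pi_j}$ acts on the complementary tensor factor, so the two commute and their product is a projection; the cases $j \in \{0,\infty\}$ are immediate, since $S(\{0\}) \in \C I$ and $E_\infty = I$.

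The one genuinely delicate point, which I expect to be the main obstacle, is the mutual orthogonality $F_i F_j = 0$ for $i \ne j$: a naive tensor-product expansion of $E_{\pi_i} S\bigl((\pi_{j-1},\pi_j]\bigr)$ does not obviously vanish. The clean way to see it is to locate the ranges. For $i < j$ one has, with the convention $\pi_0 = 0$,
\[
\im F_i \subseteq \im S\bigl([0,\pi_i]\bigr) \subseteq \im S\bigl([0,\pi_{j-1}]\bigr), \qquad \im F_j \subseteq \im S\bigl((\pi_{j-1},\pi_j]\bigr),
\]
and the two right-hand subspaces are orthogonal because the sets $[0,\pi_{j-1}]$ and $(\pi_{j-1},\pi_j]$ are disjoint (Proposition~\ref{prp:qst}); hence $F_i F_j = F_j F_i = 0$, the essential input being the nesting $S\bigl([0,s]\bigr) \le S\bigl([0,t]\bigr)$ for $s \le t$. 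The block at $\infty$ is handled likewise, using that $\im F_i \subseteq \im S\bigl([0,\infty)\bigr)$, which is orthogonal to $\im S(\{\infty\}) \supseteq \im F_\infty$. Granting this, the $F_j$ are mutually orthogonal projections, so for each $x \in \fock$ the partial sums obey $\bigl\| \sum_{j \le N} F_j x \bigr\|^2 = \sum_{j \le N} \| F_j x \|^2 \le \| x \|^2$; the series therefore converges in the strong operator topology, and its limit $E_S^\pi$ is the orthogonal projection onto $\overline{\bigoplus_j \im F_j}$.

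For the refinement inequality I would again argue by range inclusion. If $\pi'$ refines $\pi$ then each block $(\pi'_{l-1},\pi'_l]$ of $\pi'$ is contained in a unique block $(\pi_{k-1},\pi_k]$ of $\pi$, so that $S\bigl((\pi'_{l-1},\pi'_l]\bigr) \le S\bigl((\pi_{k-1},\pi_k]\bigr)$ and, since the $E_t$ increase (as $E_s E_t = E_s$ for $s \le t$), also $E_{\pi'_l} \le E_{\pi_k}$. As each pair commutes, the corresponding summand of $E_S^{\pi'}$ projects onto $\im S\bigl((\pi'_{l-1},\pi'_l]\bigr) \cap \im E_{\pi'_l} \subseteq \im S\bigl((\pi_{k-1},\pi_k]\bigr) \cap \im E_{\pi_k} = \im F_k$, so every summand of $E_S^{\pi'}$ has range inside $\im E_S^\pi$; thus $\im E_S^{\pi'} \subseteq \im E_S^\pi$, that is, $E_S^{\pi'} \le E_S^\pi$. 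Finally, any two partitions admit a common refinement, so $(E_S^\pi)_\pi$ is a decreasing net of projections; such a net converges strongly to its infimum, which is again an orthogonal projection. Hence $E_S := \stlim_\pi E_S^\pi$ exists, is a projection, and satisfies $E_S \le E_S^\pi$ for every~$\pi$.
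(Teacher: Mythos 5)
Your proposal is correct, and it is built on the same two structural facts as the paper's proof: that $S\bigl( ( \pi_{j - 1}, \pi_j ] \bigr)$ and $E_{\pi_j}$ act on complementary tensor factors of $\fock$, so commute and have an orthogonal projection as their product, and that spectral projections of disjoint Borel sets annihilate one another. Where you genuinely diverge is in how the two non-trivial claims are extracted from these facts. For convergence, the paper proves a quantitative Cauchy estimate: commuting $E_{\pi_j}$ past $S\bigl( ( \pi_{j - 1}, \pi_j ] \bigr)$, discarding it, and using additivity of $S$, it bounds the tail of the series acting on $x$ by $\| S\bigl( ( \pi_m, \infty ) \bigr) x \|^2$, which tends to zero by countable additivity; that the limit is a projection is then left as ``immediately verified''. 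You instead make the mutual orthogonality of the summands fully explicit (via range inclusions and disjointness), so that the partial sums form an increasing, uniformly bounded sequence of orthogonal projections; convergence, the projection property of the limit, and the identification of $\im E_S^\pi$ as the closed span of the summands' ranges then all arrive at once. For the refinement inequality, the paper expands $\langle x, ( E_S^\pi - E_S^{\pi'} ) x \rangle$ block by block into terms of the form $\langle x, S\bigl( ( \pi'_{l - 1}, \pi'_l ] \bigr) ( E_{\pi_k} - E_{\pi'_l} ) x \rangle$, each non-negative as the expectation of a product of two commuting projections (here the observation that $E_{\pi_k} - E_{\pi'_l}$ is itself a projection, living in $I_{\pi'_l)} \otimes \bop{\fock_{[\pi'_l}}{}$, is the paper's key preliminary); you prove the equivalent range inclusion $\im E_S^{\pi'} \subseteq \im E_S^\pi$ instead, again block by block, using that commuting projections multiply to the projection onto the intersection of their ranges. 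Both routes are sound: the paper's computations yield an explicit modulus of convergence in terms of the spectral measure, of the kind that reappears in the dominated-convergence step of Theorem~\ref{thm:qstint}, whereas your lattice-theoretic phrasing is more economical, making the projection structure and the ordering visible without any estimates. One pedantic remark on your refinement step: the summands of $E_S^{\pi'}$ attached to the atoms $\{ 0 \}$ and $\{ \infty \}$ are literally equal to the corresponding summands of $E_S^\pi$, which is what makes the assertion that \emph{every} summand of $E_S^{\pi'}$ has range inside $\im E_S^\pi$ true; this deserves a word, as your block-matching argument only covers the summands indexed by intervals.
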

\begin{proof}
Note first that if $s$, $t \in \R_+$ are such that $s < t$ then
\[
S\bigl( ( s, t ] \bigr) = %
S\bigl( [ 0, t ] \bigr) - S\bigl( [ 0, s ] \bigr) \in %
\bop{\fock_{t)}}{} \otimes I_{[t} \qquad \text{and} \qquad %
E_t \in I_{t)} \otimes \bop{\fock_{[t}}{}.
\]
Hence if $m$, $n \in \Z_+$ are such that $m \le n$ then
\begin{align*}
\Bigl\| \sum_{j = m + 1}^n S\bigl( ( \pi_{j - 1}, \pi_j ] \bigr) %
E_{\pi_j} x \Bigr\|^2 & = \sum_{j = m}^n %
\| S\bigl( ( \pi_{j - 1}, \pi_j ] \bigr) E_{\pi_j} x \|^2 \\[1ex]
 & = \sum_{j = m + 1}^n %
\| E_{\pi_j} S\bigl( ( \pi_{j - 1}, \pi_j ] \bigr) x \|^2 \\[1ex]
 & \le \sum_{j = m + 1}^n %
\| S\bigl( ( \pi_{j - 1}, \pi_j ] \bigr) x \|^2 \\[1ex]
 & \le \| S\bigl( ( \pi_m, \infty ) \bigr) x \|^2
\end{align*}
for all $x \in \fock$, which gives the first claim; that $E_S^\pi$ is
an orthogonal projection is immediately verified. For the second, note
first that if $s$, $t \in \R_+$ are such that $0 < s \le t$ then
\[
( E_t - E_s )^2 = E_t^2 + E_s^2 - 2 E_s = E_t - E_s \in %
I_{s)} \otimes \bop{\fock_{[s}}{}.
\]
For all $j \in \Z_+$, let~$k_j \ge j$ be such that
$\pi'_{k_j} = \pi_j$ and let~$l_j \ge 1$ be such that
$\pi'_{k_j + l_j} = \pi_{j + 1}$. Then
\[
\langle x, ( E_S^\pi - E_S^{\pi'} ) x \rangle
= \sum_{j = 0}^\infty \sum_{l = 1}^{l_j} \langle x, %
 S\bigl( ( \pi'_{k_j + l - 1}, \pi'_{k_j + l} ] \bigr) %
 ( E_{\pi'_{k_j + l_j}} - E_{\pi'_{k_j + l}} ) x \rangle \ge 0
\]
for all $x \in \fock$, as required. The final claim is readily
verified.
\end{proof}

\begin{remark}
If the quantum stopping time $S$ corresponds to the deterministic time
$t \in [ 0, \infty ]$, so that $S\bigl( \{ t \} \bigr) = I$, then
$E_S = E_t$.
\end{remark}

\begin{example}[{\cite[Remark~3.6]{BaL86}}]
Let $M$ be a normal martingale with the chaotic-representation
property, such as a standard Brownian motion, and identity the Fock
space $\fock$ with $L^2\probsp$, as in Examples~\ref{eg:classicalst}
and~\ref{eg:poissonst}. If $\tau$ is a classical stopping time for $M$
then
\begin{equation}\label{eqn:classicaltp}
E_T \xi = \expn[ \xi | \salg_\tau ] %
\qquad \text{for all } \xi \in L^2\probsp,
\end{equation}
where $T$ is the quantum stopping time corresponding to~$\tau$, as in
Example~\ref{eg:classicalst}, and $\salg_\tau$ is the~$\sigma$-algebra
at the stopping time $\tau$. To see that (\ref{eqn:classicaltp})
holds, note first that, in this interpretation of Fock space, the
exponential vector $\evec{f}$ is a stochastic exponential and
satisfies the stochastic differential equation
\cite[Section~II.2]{Mey86}
\begin{equation}
\evec{f} = 1 + \int_0^\infty f( t ) \evec{1_{[ 0, t )} f} \std M_t %
\qquad \text{for all } f \in \elltwo,
\end{equation}
therefore
\begin{equation}
\expn[ \evec{f} | \salg_t ] = %
1 + \int_0^t f( s ) \evec{1_{[ 0, s )} f} \std M_s = %
\evec{1_{[ 0, t )} f} = E_t \evec{f}
\end{equation}
for all $t \in [ 0, \infty ]$.  Hence if $\tau$ takes values in the
set $\{ t_1 < \cdots < t_n \}$ then
\[
\expn[ \xi | \salg_\tau ] = %
\sum_{i = 1}^n 1_{\{ \tau = t_i \}} \expn[ \xi | \salg_{t_i} ] = %
\sum_{i = 1}^n 1_{\{ \tau = t_i \}} E_{t_i} \xi = E_T \xi
\]
for all $\xi \in L^2\probsp$. The general case now follows by
approximation: let~$\xi_t := \expn[ \xi | \salg_t ]$ for
all~$t \in [ 0, \infty ]$ and note that, given classical stopping
times $( \tau_n )_{n \in \N}$ such that~$\tau_n \to \tau$ almost
surely as $n \to \infty$,
\[
\expn[ \xi | \salg_{\tau_n} ] = \xi_{\tau_n} \to %
\xi_\tau = \expn[ \xi | \salg_\tau ] \qquad \text{as } n \to \infty,
\]
by optional sampling, almost surely and in $L^2\probsp$.
\end{example}

The following theorem has its origins in work of Meyer
\cite[equation~(12) on~p.74]{Mey87}; see also
\cite[Proposition~6]{AtS98}, \cite[Theorem~6.2]{AtC04} and
\cite[Theorem~2.5]{Coq06}. This representation shows that
$( E_{S \wedge s} )_{s \in \R_+}$ is a regular $\Omega$-martingale
closed by $E_S$; this is a very fruitful observation  when combined
with the quantum It\^o product formula. For the requisite details
in regard to quantum stochastic integration, see the appendix,
Section~\ref{sec:qsc}.

\begin{theorem}\label{thm:qstint}
Let $S$ be a quantum stopping time. Then
\begin{equation}\label{eqn:tpint}
E_S = I - \int_0^\infty I_\mul \otimes %
S\bigl( [ 0, s ] \bigr ) E_s \std \Lambda_s = %
E_0 + \int_0^\infty I_\mul \otimes %
S\bigl( ( s, \infty ] \bigr) E_s \std \Lambda_s.
\end{equation}
\end{theorem}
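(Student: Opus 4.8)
The plan is to verify both identities as quadratic forms on the exponential domain, which suffices since $\{ \evec{f} : f \in \elltwo \}$ is total in $\fock$. Since $S\bigl( [ 0, s ] \bigr) + S\bigl( ( s, \infty ] \bigr) = I$, the two right-hand sides of (\ref{eqn:tpint}) differ by $I - E_0 - \int_0^\infty I_\mul \otimes E_s \std \Lambda_s$, so it is enough to prove the first equality together with the auxiliary identity $I = E_0 + \int_0^\infty I_\mul \otimes E_s \std \Lambda_s$. The latter follows from the first fundamental formula for the gauge integral: using $E_s \evec{v} = \evec{1_{[ 0, s )} v}$ and $\langle \evec{u}, \evec{1_{[ 0, s )} v} \rangle = \exp \int_0^s \langle u( r ), v( r ) \rangle \std r$, the matrix element of the integral against $\evec{u}$, $\evec{v}$ equals $\int_0^\infty \phi'( s ) e^{\phi( s )} \std s = e^{\phi( \infty )} - 1$, where $\phi( s ) := \int_0^s \langle u( r ), v( r ) \rangle \std r$; adding $\langle \evec{u}, E_0 \evec{v} \rangle = 1$ and noting $\phi( \infty ) = \langle u, v \rangle$ recovers $e^{\langle u, v \rangle} = \langle \evec{u}, \evec{v} \rangle$.

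For the first equality in (\ref{eqn:tpint}) I would compute $\langle \evec{u}, E_S \evec{v} \rangle = \stlim_\pi \langle \evec{u}, E_S^\pi \evec{v} \rangle$ directly from (\ref{eqn:es}). Set $g( s ) := \langle \evec{u}, S\bigl( [ 0, s ] \bigr) E_s \evec{v} \rangle$. Using the identity adaptedness of $S$ and the factorisation $\evec{f} = \evec{1_{[ 0, t )} f} \otimes \evec{1_{[ t, \infty )} f}$ at $t = \pi_{j - 1}$—so that $S\bigl( [ 0, \pi_{j - 1} ] \bigr)$ acts as the identity on $\fock_{[ \pi_{j - 1}}$ and the pairing over that factor produces an exponential—each summand evaluates as
\[
\langle \evec{u}, S\bigl( ( \pi_{j - 1}, \pi_j ] \bigr) E_{\pi_j} \evec{v} \rangle = g( \pi_j ) - g( \pi_{j - 1} ) \exp \int_{\pi_{j - 1}}^{\pi_j} \langle u( r ), v( r ) \rangle \std r .
\]
Writing the exponential as $1 + \bigl( \exp \int_{\pi_{j - 1}}^{\pi_j} \langle u, v \rangle - 1 \bigr)$, the sum over $j$ splits into a telescoping part converging to $g( \infty ) - g( 0 )$ and a remainder $-\sum_j g( \pi_{j - 1} ) \bigl( \exp \int_{\pi_{j - 1}}^{\pi_j} \langle u, v \rangle - 1 \bigr)$. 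Adding the boundary contributions $\langle \evec{u}, S( \{ 0 \} ) E_0 \evec{v} \rangle = g( 0 )$ and $\langle \evec{u}, S( \{ \infty \} ) \evec{v} \rangle$, and using $g( \infty ) = e^{\langle u, v \rangle} - \langle \evec{u}, S( \{ \infty \} ) \evec{v} \rangle$—which holds because $S\bigl( [ 0, s ] \bigr) E_s \to I - S( \{ \infty \} )$ strongly as $s \to \infty$—cancels the $S( \{ \infty \} )$ terms and leaves $\langle \evec{u}, E_S \evec{v} \rangle = e^{\langle u, v \rangle} - \int_0^\infty \langle u( s ), v( s ) \rangle g( s ) \std s$, which is precisely the matrix element of $I - \int_0^\infty I_\mul \otimes S\bigl( [ 0, s ] \bigr) E_s \std \Lambda_s$.

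The main obstacle is identifying the limit of the remainder sum. Since partitions of small mesh are cofinal in the refinement ordering and the net $\langle \evec{u}, E_S^\pi \evec{v} \rangle$ converges by Theorem~\ref{thm:expS}, it suffices to compute its limit along partitions whose mesh tends to zero. The map $g$ is bounded, and right-continuity of $s \mapsto S\bigl( [ 0, s ] \bigr) x$ together with norm-continuity of $s \mapsto \evec{1_{[ 0, s )} v}$ makes $g$ continuous off a countable set; hence the left-endpoint step functions $g_\pi$ converge to $g$ almost everywhere, and since $\langle u( \cdot ), v( \cdot ) \rangle \in L^1( \R_+ )$, dominated convergence gives $\sum_j g( \pi_{j - 1} ) \int_{\pi_{j - 1}}^{\pi_j} \langle u, v \rangle \to \int_0^\infty g( s ) \langle u( s ), v( s ) \rangle \std s$. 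The difference between this and the remainder sum is controlled by $C \, \| g \|_\infty \bigl( \max_j \int_{\pi_{j - 1}}^{\pi_j} | \langle u, v \rangle | \bigr) \int_0^\infty | \langle u( s ), v( s ) \rangle | \std s$, using $| e^z - 1 - z | \le \tfrac12 | z |^2 e^{| z |}$, and this vanishes as the mesh shrinks by absolute continuity of the integral. Finally, as the resulting form agrees with that of the bounded operator $E_S$ on the total set of exponential vectors, the gauge integral extends to the bounded operator $I - E_S$, completing the proof.
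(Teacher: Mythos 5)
Your proposal is correct, and it shares the paper's overall skeleton---approximate $E_S$ by the partition sums $E_S^\pi$ of Theorem~\ref{thm:expS}, isolate a telescoping part, and kill the error under refinement by dominated convergence together with the monotonicity of $s \mapsto \| S\bigl( [ 0, s ] \bigr) \evec{v} \|$---but the execution is genuinely different. The paper works at the vector level: testing against $\evec{g}$ with $g$ compactly supported, it expands $I - E_{\pi_j}$ into the increments $E_{\pi_{k + 1}} - E_{\pi_k}$, converts these into gauge integrals via (\ref{eqn:Eid}) and Lemma~\ref{lem:qsi}, and then shows that the operator remainder $R_\pi$ (itself a gauge integral) vanishes strongly, using the vacuum-adapted integral estimate. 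You work entirely with scalar matrix elements: your summand identity $\langle \evec{u}, S\bigl( ( \pi_{j - 1}, \pi_j ] \bigr) E_{\pi_j} \evec{v} \rangle = g( \pi_j ) - g( \pi_{j - 1} ) \exp \int_{\pi_{j - 1}}^{\pi_j} \langle u( r ), v( r ) \rangle \std r$ is correct (it is exactly the factorisation of $\fock$ at $\pi_{j - 1}$, using $S\bigl( [ 0, \pi_{j - 1} ] \bigr) \in \bop{\fock_{\pi_{j - 1})}}{} \otimes I_{[\pi_{j - 1}}$), and it reduces the whole problem to classical Riemann-sum convergence for the bounded scalar function $g$, plus the elementary bound on $| e^z - 1 - z |$. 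This buys independence from Lemmas~\ref{lem:qsi} and~\ref{lem:qsiestimate}: you need only the defining relation (\ref{eqn:firstintid}) and the boundedness of vacuum-adapted gauge integrals from Definition~\ref{def:vacrsm}. You also handle arbitrary exponential vectors directly, the $S\bigl( \{ \infty \} \bigr)$ boundary term being absorbed by $g( \infty )$, where the paper tests against compactly supported $g$; and you make explicit the passage from the refinement net to mesh-zero partitions (via cofinality of small-mesh refinements of any given partition), a point which the paper's dominated-convergence step leaves implicit.

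One justification is too thin: the almost-everywhere continuity of $g$. Lemma~\ref{lem:stmeas} and right-continuity concern $s \mapsto S\bigl( [ 0, s ] \bigr) x$ for a \emph{fixed} vector $x$, whereas in $g( s ) = \langle \evec{u}, S\bigl( [ 0, s ] \bigr) E_s \evec{v} \rangle$ the vector $E_s \evec{v}$ moves with $s$: the countable exceptional set supplied by Lemma~\ref{lem:stmeas} for $x = E_{s_0} \evec{v}$ depends on $s_0$, so the argument as you state it is circular. The patch is the same factorisation you use for the summands: since $S\bigl( [ 0, s ] \bigr) \in \bop{\fock_{s)}}{} \otimes I_{[s}$ and $E_s \evec{v} = \evec{1_{[ 0, s )} v}$,
\[
g( s ) = \exp\Bigl( -\int_s^\infty \langle u( r ), v( r ) \rangle \std r \Bigr) \,
\langle \evec{u}, S\bigl( [ 0, s ] \bigr) \evec{v} \rangle,
\]
and the prefactor is continuous in $s$ while Lemma~\ref{lem:stmeas}, applied to the single vector $\evec{v}$, makes the second factor continuous off a countable set. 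With this repair your proof is complete.
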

\begin{proof}
Note first that
\begin{align*}
\langle \evec{f}, %
\int_t^\infty I_\mul \otimes E_s \std \Lambda_s \, %
\evec{g} \rangle & = %
\int_t^\infty \langle f( s ), g( s ) \rangle \, %
\langle \evec{f}, E_s \evec{g} \rangle \std s \\[1ex]
 & = \int_t^\infty \frac{\rd}{\rd t} %
\langle \evec{f}, E_s \evec{g} \rangle \std s \\[1ex]
 & = \langle \evec{f}, ( I - E_t ) \evec{g} \rangle,
\end{align*}
so
\begin{align}\label{eqn:Eid}
E_t & = I - \int_t^\infty I_\mul \otimes E_s \std \Lambda_s \nonumber
\\[1ex]
 & = I - \int_0^\infty I_\mul \otimes E_s \std \Lambda_s + %
\int_0^t I_\mul \otimes E_s \std \Lambda_s = %
E_0 + \int_0^t I_\mul \otimes E_s \std \Lambda_s.
\end{align}
If $S\bigl( \{ 0 \} \bigr) = I$ then $E_S = E_0$ and the
identities hold as claimed. Now suppose that
$S\bigl( \{ 0 \} \bigr) = 0$ and let $\pi = ( \pi_j )_{j = 0}^\infty$
be a partition of $\R_+$. If $f$, $g \in \elltwo$ and $g$ has support
in $[ 0, \pi_n ]$ then
\begin{align*}
\langle \evec{f}, ( I - E^\pi_S ) \evec{g} \rangle & = %
\sum_{j = 1}^\infty \langle \evec{f}, %
S\bigl( ( \pi_{j - 1}, \pi_j ] \bigr) %
( I - E_{\pi_j} ) \evec{g} \rangle \\[1ex]
 & = \sum_{j = 1}^{n - 1} \sum_{k = j}^{n - 1} %
\langle \evec{f}, S\bigl( ( \pi_{j - 1}, \pi_j ] \bigr) %
\bigl( E_{\pi_{k + 1}} - E_{\pi_k} \bigr) \evec{g} \rangle \\[1ex]
 & = \sum_{k = 1}^{n - 1} \Bigl\langle \evec{f}, %
S\bigl( [ 0, \pi_k ] \bigr) %
\int_{\pi_k}^{\pi_{k + 1}} I_\mul \otimes E_s \std \Lambda_s \, %
\evec{f} \Bigr\rangle \\[1ex]
 & = \Bigl\langle \evec{f}, %
\Bigl( \int_0^\infty I_\mul \otimes S\bigl( [ 0, s ] \bigr) E_s %
\std \Lambda_s - R_\pi \Bigr) \evec{g} \Bigr\rangle,
\end{align*}
by Lemma~\ref{lem:qsi}, where
\[
R_\pi := \int_0^\infty \sum_{k = 0}^\infty %
1_{( \pi_k, \pi_{k + 1} ]}( s ) I_\mul \otimes %
S\bigl( ( \pi_k, s ] \bigr) E_s \std \Lambda_s.
\]
Finally, if $h \in \elltwo$ then
\begin{align*}
\| R_\pi \evec{h} \|^2 & \le \int_0^\infty \| \sum_{k = 0}^\infty %
1_{( \pi_k, \pi_{k + 1} ]}( s ) S\bigl( ( \pi_k, s ] \bigr) E_s %
\evec{h} \|^2 \| h( s ) \|^2 \std s \\[1ex]
 & = \int_0^\infty \sum_{k = 0}^\infty %
1_{( \pi_k, \pi_{k + 1} ]}( s ) \| S\bigl( ( \pi_k, s ] \bigr) E_s %
\evec{h} \|^2 \| h( s ) \|^2 \std s \to 0
\end{align*}
as $\pi$ is refined, by the dominated-convergence theorem. To see
this, note that if $s \in ( \pi_k, \pi_{k + 1} ]$ then
\begin{align*}
\| S\bigl( ( \pi_k, s ] \bigr) E_s \evec{h} \|^2 & = %
\| S\bigl( ( \pi_k, s ] \bigr) \evec{h} \|^2 \, %
\exp( -\| 1_{[ s, \infty )} h \|^2 ) \\[1ex]
 & \le \| S\bigl( ( \pi_k, s ] \bigr) \evec{h} \|^2 \\[1ex]
 & = \| S\bigl( [ 0, s ] \bigr) \evec{h} \|^2 - %
\| S\bigl( [ 0, \pi_k ] \bigr) \evec{h} \|^2 \to 0
\end{align*}
as $s \to \pi_k$, as long as $\pi_k$ is not a point of discontinuity
of $s \mapsto S\bigl( [ 0, s ] \bigr) \evec{h}$; furthermore,
\[
\sum_{k = 0}^\infty 1_{( \pi_k, \pi_{k + 1} ]}( s ) %
\| S\bigl( ( \pi_k, s ] \bigr) E_s \evec{h} \|^2 \, %
\| h( s ) \|^2 \le \| E_s \evec{h} \|^2 \, \| h( s ) \|^2
\]
for all $s \in \R_+$. The first identity is now established, and the
second may be obtained by writing
\[
I - E_0 = \!\int_0^\infty I_\mul \otimes E_s \std \Lambda_s = %
\int_0^\infty I_\mul \otimes %
S\bigl( [ 0, s ] \bigr) E_s \std \Lambda_s + %
\int_0^\infty I_\mul \otimes %
S\bigl( ( s, \infty ] \bigr) E_s \std \Lambda_s.
\qedhere
\]
\end{proof}

\begin{remark}
Let $S$ be a quantum stopping time. It follows from (\ref{eqn:tpint})
and Theorem~\ref{thm:qipf} that
\[
\| E_S x \|^2 = \| E_0 x \|^2 + %
\int_0^\infty \| ( I_\mul \otimes %
S\bigl( ( s, \infty ] \bigr) ) D_s x \|^2 \std s %
\qquad \text{for all } x \in \fock,
\]
where
\[
D : \fock \to L^2( \R_+; \mul \otimes \fock ); \ 
\big( D \evec{f} \bigr)( t ) = D_t \evec{f} := %
f( t ) \evec{1_{[ 0, t )} f}
\]
is the \emph{adapted gradient} \cite[Section~2.2]{AtL04}. When $S$ is
deterministic, this identity \cite[Proposition~2.3]{AtL04} is a key tool
for establishing the existence of quantum stochastic integrals,
particularly in the vacuum-adapted setting \cite{Blt01}.
\end{remark}

\begin{remark}\label{rem:qsconv}
Let $S$ and $T$ be quantum stopping times. Theorem~\ref{thm:qipf}
and~(\ref{eqn:tpint}) imply that
\[
\| ( E_S - E_T ) x \|^2 = \int_0^\infty \| \bigl( I_\mul \otimes %
( S\bigl( [ 0, s ] \bigr) - %
T\bigl( [ 0, s ] \bigr) ) \bigr) D_s x \|^2 \std s %
\qquad \text{for all } x \in \fock.
\]
It follows that the map $S \mapsto E_S$ is continuous when the set of
time projections is equipped with the strong operator topology and a
net of quantum stopping times $( S_\lambda )$ is defined to converge
to a spectral measure $S$ (which must then be a quantum stopping time)
if and only
if~$S_\lambda\bigl( [ 0, t ] \bigr) \to S\bigl( [ 0, t ] \bigr)$ in
the strong operator topology for all but a Lebesgue-null set of
points~$t$ in $[ 0, \infty ]$; this situation will be denoted by
``$S_\lambda \qsto S$''.

Any quantum stopping time $S$ is the limit, in this sense, of a
decreasing sequence of discrete quantum stopping times
$( S_n )_{n \in \N}$ \cite[Proposition~3.3]{PaS87},
\cite[Proposition~2.3]{BaW90}; a quantum stopping time $S$ is
\emph{discrete} if there exists a finite set
$A \subseteq [ 0, \infty ]$, the \emph{support} of $S$, such that
$S( A ) = I$ and $S( B ) \neq I$ if $B$ is any proper subset of~$A$.
Note that $S \wedge s \qsto S$ as~$s \to \infty$, so
$E_{S \wedge s} \to E_S$ in the strong operator topology.

In \cite{PaS87}, Parthasarathy and Sinha employ a weaker notion of
discreteness (allowing the support of $S$ to be countably infinite)
and a stronger notion of convergence (requiring
that~$S_\lambda\bigl( [ 0, t ] \bigr) \to S\bigl( [ 0, t ] \bigr)$ in
the strong operator topology for all $t \in [ 0, \infty ]$ such that
$S\bigl( \{ t \} \bigr) = 0$).
\end{remark}

\begin{example}{\cite[pp.507--508]{AtC04}, \cite[pp.323--324]{PaS87}}
Recall that the Boson Fock space~$\fock_+( \hilb )$ has a chaos
decomposition, so that
\[
\fock_+( \hilb ) = \bigoplus_{n = 0}^\infty \hilb^{\otimes_s n} %
\qquad \text{and} \qquad \evec{f} = %
\sum_{n = 0}^\infty \frac{1}{\sqrt{n!}} f^{\otimes n} %
\quad \text{for all } f \in \hilb,
\]
where $\hilb$ is any complex Hilbert space and $\otimes_s$ denotes the
symmetric tensor product.

Fix~$n \in \Z_+$ and, for all $t \in ( 0, \infty )$, let
$P_{n, t} \in \bop{\fock}{}$ be the orthogonal projection onto the
subspace
\[
\bigoplus_{j = 0}^n L^2\bigl( [ 0, t ); \mul \bigr)^{\otimes_s j} %
\otimes \fock_{[t} \subseteq \fock_{t)} \otimes \fock_{[t} = \fock;
\]
let $P_{n, 0} = I$, let $P_{n, \infty} = 0$ and let $P_n$ be the
orthogonal projection onto
$\bigoplus_{j = 0}^n \elltwo^{\otimes_s j}$. Note
that~$P_{n, s} \ge P_{n, t}$ for all $s$, $t \in [ 0, \infty ]$ such
that $s \le t$, so setting
\begin{equation}
S_n\bigl( [ 0, t ] \bigr) := P_{n, t}^\perp = I - P_{n, t} %
\qquad \text{for all } t \in [ 0, \infty ]
\end{equation}
defines a stopping time. Furthermore, if $t \in \R_+$ and
$f \in \elltwo$ then
\[
S_n\bigl( ( t, \infty ] \bigr) E_t \evec{f} = %
P_{n, t} \evec{1_{[ 0, t )} f} = P_n E_t \evec{f},
\]
so
\begin{align}
\langle \evec{f}, E_{S_n} \evec{g} \rangle & = %
1 + \int_0^t \langle \evec{f}, P_n E_s \evec{g} \rangle \, %
\langle f( s ), g( s ) \rangle \std s \nonumber \\[1ex]
 & = 1 + \sum_{k = 0}^n \frac{1}{k!} \int_0^t \Bigl( %
\int_0^s \langle f( r ), g( r ) \rangle \std r \Bigr)^k %
\langle f( s ), g( s ) \rangle \std s \nonumber \\[1ex]
 & = 1 + \sum_{k = 0}^n \frac{1}{( k + 1 )!} \Bigl( %
\int_0^t \langle f( s ), g( s ) \rangle \std s \Bigr)^{k + 1} %
\nonumber \\[1ex]
 & = \langle \evec{f}, P_{n + 1} \evec{g} \rangle
\end{align}
for all $f$, $g \in \elltwo$. Thus $E_{S_n} = P_{n + 1}$.

Finally, note that $P_{n, t} \le P_{n + 1, t}$ for all
$t \in [ 0, \infty ]$, so $S_n \le S_{n + 1}$ for all~$n \in \Z_+$.
\end{example}

\begin{definition}[{\cite[Section~5]{PaS87}}]
If $S$ is a quantum stopping time $S$ then the
\emph{pre-$S$ space}~$\fock_{S)}$ is the range of the time projection
$E_S$; thus $\fock_{S)} := E_S( \fock )$.
\end{definition}

\begin{theorem}%
[{\Cf\cite[Theorem~3.7]{BaT87},\cite[Theorem~3.5]{BaW90}}]%
\label{thm:exporder}
Let $
S$ and $T$ be quantum stopping times.
\begin{mylist}
\item[\tu{(i)}] If $S \le T$ then $E_S \le E_T$ and
$\fock_{S)} \subseteq \fock_{T)}$.
\item[\tu{(ii)}] The time projections
$E_{S \wedge T} = E_S \wedge E_T$ and $E_{S \vee T} = E_S \vee E_T$.
\item[\tu{(iii)}] If $S\bigl( [ 0, t ] \bigr)$ and
$T\bigl( [ 0, t ] \bigr)$ commute for all $t \in \R_+$ then so
do~$E_S$ and $E_T$.
\item[\tu{(iv)}] If $s \in \R_+$ then
$E_S E_s = E_{S \wedge s} = E_s E_S$.
\end{mylist}
\end{theorem}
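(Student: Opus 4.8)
My plan rests on the integral representation (Theorem~\ref{thm:qstint}) and, more usefully, on the isometric identity in the remark that follows it: for every quantum stopping time $U$,
\[
\| E_U x \|^2 = \| E_0 x \|^2 + \int_0^\infty \bigl\| \bigl( I_\mul \otimes U\bigl( ( s, \infty ] \bigr) \bigr) D_s x \bigr\|^2 \std s \qquad ( x \in \fock ),
\]
where $D$ is the adapted gradient. Taking $U$ to be the deterministic time~$\infty$ gives the normalisation $\int_0^\infty \| D_s x \|^2 \std s = \| x \|^2 - \| E_0 x \|^2$. For~(i), if $S \le T$ then $S\bigl( ( s, \infty ] \bigr) = I - S\bigl( [ 0, s ] \bigr) \le I - T\bigl( [ 0, s ] \bigr) = T\bigl( ( s, \infty ] \bigr)$; since $\| P y \| \le \| Q y \|$ whenever $P \le Q$ are projections, the identity yields $\| E_S x \| \le \| E_T x \|$ for all~$x$, and for orthogonal projections this is equivalent to $E_S \le E_T$ (if $x \in \fock_{S)}$ then $\| x \| = \| E_S x \| \le \| E_T x \| \le \| x \|$ forces $E_T x = x$). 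Hence also $\fock_{S)} \subseteq \fock_{T)}$.

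Part~(ii) is the heart of the matter. The relations $S \wedge T \le S, T \le S \vee T$ together with~(i) give the easy halves $E_{S \wedge T} \le E_S \wedge E_T$ and $E_S \vee E_T \le E_{S \vee T}$, and the main obstacle is the reverse inclusions; here I would exploit the rigidity of the isometric identity. If $x \in \fock_{S)} \cap \fock_{T)}$ then $\| E_S x \| = \| x \| = \| E_T x \|$, and comparison with the normalisation, using that $I_\mul \otimes S\bigl( [ 0, s ] \bigr)$ and $I_\mul \otimes S\bigl( ( s, \infty ] \bigr)$ are complementary projections, forces $\bigl( I_\mul \otimes S\bigl( [ 0, s ] \bigr) \bigr) D_s x = 0$ and $\bigl( I_\mul \otimes T\bigl( [ 0, s ] \bigr) \bigr) D_s x = 0$ for almost every~$s$. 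Thus $D_s x$ lies in $\mul \otimes \ker S\bigl( [ 0, s ] \bigr)$ and in $\mul \otimes \ker T\bigl( [ 0, s ] \bigr)$, hence in
\[
\mul \otimes \bigl( \ker S\bigl( [ 0, s ] \bigr) \cap \ker T\bigl( [ 0, s ] \bigr) \bigr) = \mul \otimes \ker\bigl( S\bigl( [ 0, s ] \bigr) \vee T\bigl( [ 0, s ] \bigr) \bigr),
\]
which is exactly the kernel of $I_\mul \otimes ( S \wedge T )\bigl( [ 0, s ] \bigr)$, since $( S \wedge T )\bigl( [ 0, s ] \bigr) = S\bigl( [ 0, s ] \bigr) \vee T\bigl( [ 0, s ] \bigr)$. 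Feeding this back into the identity gives $\| E_{S \wedge T} x \| = \| x \|$, so $x$ lies in the range of $E_{S \wedge T}$ and $E_S \wedge E_T \le E_{S \wedge T}$. The join is the exact dual: if $x \in \ker E_S \cap \ker E_T$ then every nonnegative term in both identities vanishes, giving $E_0 x = 0$ and $\bigl( I_\mul \otimes S\bigl( ( s, \infty ] \bigr) \bigr) D_s x = 0 = \bigl( I_\mul \otimes T\bigl( ( s, \infty ] \bigr) \bigr) D_s x$ almost everywhere; the same computation applied to $( S \vee T )\bigl( ( s, \infty ] \bigr) = S\bigl( ( s, \infty ] \bigr) \vee T\bigl( ( s, \infty ] \bigr)$ yields $E_{S \vee T} x = 0$, so $E_{S \vee T} \le E_S \vee E_T$. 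The only routine points to verify are the Hilbert-space identities $\ker( I_\mul \otimes P ) = \mul \otimes \ker P$ and $\ker( P \vee Q ) = \ker P \cap \ker Q$.

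For~(iii) I would, rather than expanding $E_S E_T$ via the quantum It\^o product formula, invoke the standard fact that orthogonal projections $E$, $F$ commute if and only if $E + F = ( E \wedge F ) + ( E \vee F )$ (compress to $\im( E \vee F ) \ominus \im( E \wedge F )$, on which the hypothesis reads $F = I - E$). By~(ii) it then suffices to prove $E_S + E_T = E_{S \wedge T} + E_{S \vee T}$, and this is immediate from the linearity of the integral representation in its integrand: the difference of the two sides equals
\[
\int_0^\infty I_\mul \otimes \Bigl( S\bigl( ( s, \infty ] \bigr) + T\bigl( ( s, \infty ] \bigr) - ( S \wedge T )\bigl( ( s, \infty ] \bigr) - ( S \vee T )\bigl( ( s, \infty ] \bigr) \Bigr) E_s \std \Lambda_s,
\]
whose integrand vanishes because $S\bigl( ( s, \infty ] \bigr)$ and $T\bigl( ( s, \infty ] \bigr)$ commute (being complements of the commuting $S\bigl( [ 0, s ] \bigr)$ and $T\bigl( [ 0, s ] \bigr)$) and commuting projections satisfy $P + Q = ( P \wedge Q ) + ( P \vee Q )$. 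Finally, (iv) is the case of~(ii) and~(iii) in which $T$ is the deterministic time~$s$: the spectral projections of such a time lie in $\C I$ and so commute with every $S\bigl( [ 0, t ] \bigr)$, whence~(iii) gives $E_S E_s = E_s E_S$, and this common value is the product of commuting projections, equal to $E_S \wedge E_s = E_{S \wedge s}$ by~(ii).
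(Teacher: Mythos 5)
Your proof is correct, but it takes a genuinely different route from the paper's. The paper proves all of (i)--(iii) with one tool, the quantum It\^o product formula applied to the integral representations of Theorem~\ref{thm:qstint}: part~(i) by computing $E_S E_T = E_S$ directly, part~(ii) by combining the product formula with von~Neumann's alternating-projection limit $( E_S^\perp E_T^\perp )^n \to E_S^\perp \wedge E_T^\perp$ (and the analogous computation for $( E_S - E_0 )( E_T - E_0 )$), and part~(iii) by another direct product computation. You instead work entirely from the adapted-gradient isometry
\[
\| E_S x \|^2 = \| E_0 x \|^2 + \int_0^\infty \| ( I_\mul \otimes S\bigl( ( s, \infty ] \bigr) ) D_s x \|^2 \std s,
\]
quoted from the remark following Theorem~\ref{thm:qstint}: part~(i) becomes a pointwise norm comparison, and part~(ii) becomes an equality-case (rigidity) analysis, identifying $\im( E_S \wedge E_T )$ and $\ker E_S \cap \ker E_T$ through the almost-everywhere vanishing of $( I_\mul \otimes S\bigl( [ 0, s ] \bigr) ) D_s x$, respectively $( I_\mul \otimes S\bigl( ( s, \infty ] \bigr) ) D_s x$, together with the lattice identities $\ker( I_\mul \otimes P ) = \mul \otimes \ker P$, $\ker P \cap \ker Q = \ker( P \vee Q )$ and the de~Morgan relation $( S \vee T )\bigl( ( s, \infty ] \bigr) = S\bigl( ( s, \infty ] \bigr) \vee T\bigl( ( s, \infty ] \bigr)$ -- all of which you correctly flag and which are indeed routine. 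Your part~(iii) is also different and rather elegant: instead of expanding $E_S E_T$, you reduce commutativity to the lattice criterion $E + F = ( E \wedge F ) + ( E \vee F )$, verified via linearity of the gauge integral and the fact that commuting projections $P$, $Q$ satisfy $P + Q = P \wedge Q + P \vee Q$; note, though, that this makes your (iii) depend on (ii), whereas the paper's (iii) is a self-contained computation. What each approach buys: the paper's is uniform and purely algebraic, needing only the product formula plus a soft limit argument; yours dispenses with both the product formula and alternating projections (beyond what is hidden in the quoted isometry), treats meets and joins in visibly dual fashion, and is arguably more elementary at the level of Hilbert-space geometry. Parts~(iv) coincide in both treatments.
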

\begin{proof}
Some of these claims may be established by working from the
definitions, but Theorem~\ref{thm:qstint} and the quantum It\^o
product formula, Theorem~\ref{thm:qipf}, provide a slicker means of
obtaining them in the Fock-space context.

(i) As
$S\bigl( [ 0, s ] \bigr) T\bigl( [ 0, s ] \bigr) = %
T\bigl( [ 0, s ] \bigr)$
for all $s \in [ 0, \infty ]$, it follows that
\begin{align*}
E_S E_T & = \Bigl( I - \int_0^\infty I_\mul \otimes %
S\bigl( [ 0, s ] \bigr) E_s \std \Lambda_s \Bigr) %
\Bigl( I - \int_0^\infty I_\mul \otimes %
T\bigl( [ 0, s ] \bigr) E_s \std \Lambda_s \Bigr) \\[1ex]
 & = I - \int_0^\infty I_\mul \otimes %
( S\bigl( [ 0, s ] \bigr) E_s + T\bigl( [ 0, s ] \bigr) E_s - %
S\bigl( [ 0, s ] \bigr) T\bigl( [ 0, s ] \bigr) E_s ) %
\std \Lambda_s \\[1ex]
 & = I - \int_0^\infty I_\mul \otimes %
S\bigl( [ 0, s ] \bigr) E_s \std \Lambda_s \\[1ex]
 & = E_S.
\end{align*}
(ii) By the quantum It\^o product formula and von~Neumann's method
of alternating projections,
\begin{align*}
( E_S^\perp E_T^\perp )^n & = \int_0^\infty I_\mul \otimes \Bigl( %
( S\bigl( [ 0, s ] \bigr) T\bigl( [ 0, s ] \bigr) \Bigr)^n %
E_s \std \Lambda_s \\
 & \to \int_0^\infty I_\mul \otimes %
( S \wedge T )\bigl( [ 0, s ] \bigr) E_s \std \Lambda_s = %
E_{S \vee T}^\perp
\end{align*}
as $n \to \infty$, so
$E_{S \vee T} = ( E_S^\perp \wedge E_T^\perp )^\perp = E_S \vee E_T$.

For the second identity, note first that $E_S - E_0$ and $E_T - E_0$
are orthogonal projections, by (i), and, as $n \to \infty$,
\begin{align*}
\bigl( ( E_S - E_0 ) ( E_T - E_0 ) \bigr)^n & = %
\int_0^\infty I_\mul \otimes %
\Bigl( S\bigl( ( s, \infty ] \bigr) %
T\bigl( ( s, \infty ] \bigr) \Bigr)^n E_s \std \Lambda_s \\[1ex]
 & \to \int_0^\infty I_\mul \otimes %
( S \wedge T )\bigl( ( s, \infty ] \bigr) E_s \std \Lambda_s \\[1ex]
 & = E_{S \wedge T} - E_0.
\end{align*}
However, as $E_S E_0 = E_0 E_S = E_0 = E_0 E_T = E_T E_0$, we also
have that
\[
\bigl( ( E_S - E_0 ) ( E_T - E_0 ) \bigr)^n = ( E_S E_T - E_0 )^n = %
( E_S E_T )^n - E_0 \to E_S \wedge E_T - E_0,
\]
which gives the result.

(iii) As
$S\bigl( [ 0, s ] \bigr) T\bigl( [ 0, s ] \bigr) = %
T\bigl( [ 0, s ] \bigr) S\bigl( [ 0, s ] \bigr)$
for all $s \in \R_+$, it follows that
\[
E_S E_T = I - \int_0^\infty I_\mul \otimes %
\Bigl( S\bigl( [ 0, s ] \bigr) + T\bigl( [ 0, s ] \bigr) - %
S\bigl( [ 0, s ] \bigr) T\bigl( [ 0, s ] \bigr) \Bigr) E_s %
\std \Lambda_s = E_T E_S.
\]
(iv) By (ii) and (iii), as $S$ and $s$ commute as required,
so $E_S \wedge E_s = E_S E_s$.
\end{proof}

\begin{proposition}\label{prp:Sdecomp}
For all $s \in [ 0, \infty ]$, it holds that
\[
E_{S \wedge s} = S\bigl( [ 0, s ] \bigr) E_S + %
S\bigl( ( s, \infty ] \bigr) E_s.
\]
\end{proposition}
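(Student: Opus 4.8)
The plan is to exploit the integral representation (\ref{eqn:tpint}) together with the orthogonality relations for the spectral projections of $S$ supplied by Proposition~\ref{prp:qst}. First I would record the integral formulae for the three time projections in play. Since $( S \wedge s )\bigl( ( t, \infty ] \bigr) = S\bigl( ( t, \infty ] \bigr)$ for $t < s$ and vanishes for $t \ge s$, applying the second identity in (\ref{eqn:tpint}) to $S \wedge s$ gives $E_{S \wedge s} = E_0 + \int_0^s I_\mul \otimes S\bigl( ( t, \infty ] \bigr) E_t \std \Lambda_t$. Comparing this with the representation (\ref{eqn:tpint}) of $E_S$, with (\ref{eqn:Eid}) for $E_s$, and using $I - S\bigl( ( t, \infty ] \bigr) = S\bigl( [ 0, t ] \bigr)$, yields
\[
E_S - E_{S \wedge s} = \int_s^\infty I_\mul \otimes
S\bigl( ( t, \infty ] \bigr) E_t \std \Lambda_t
\qquad \text{and} \qquad
E_s - E_{S \wedge s} = \int_0^s I_\mul \otimes
S\bigl( [ 0, t ] \bigr) E_t \std \Lambda_t.
\]

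Next I would reduce the proposition to two multiplier identities. Since $S\bigl( [ 0, s ] \bigr) + S\bigl( ( s, \infty ] \bigr) = I$, it suffices to prove that
\[
S\bigl( [ 0, s ] \bigr) E_S = S\bigl( [ 0, s ] \bigr) E_{S \wedge s}
\qquad \text{and} \qquad
S\bigl( ( s, \infty ] \bigr) E_s = S\bigl( ( s, \infty ] \bigr) E_{S \wedge s};
\]
adding these and decomposing the identity then recovers the stated formula. By the displayed differences above, the two identities are precisely $S\bigl( [ 0, s ] \bigr) ( E_S - E_{S \wedge s} ) = 0$ and $S\bigl( ( s, \infty ] \bigr) ( E_s - E_{S \wedge s} ) = 0$.

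To prove these I would move the (bounded) projection through the gauge integral from the left, which is legitimate because left multiplication by a bounded operator is strongly continuous, hence commutes with the strong limit of Riemann sums defining the integral, and because $Z ( I_\mul \otimes G_t ) = I_\mul \otimes Z G_t$. For the first identity this gives $S\bigl( [ 0, s ] \bigr) ( E_S - E_{S \wedge s} ) = \int_s^\infty I_\mul \otimes S\bigl( [ 0, s ] \bigr) S\bigl( ( t, \infty ] \bigr) E_t \std \Lambda_t$, and for $t \ge s$ the sets $[ 0, s ]$ and $( t, \infty ]$ are disjoint, so $S\bigl( [ 0, s ] \bigr) S\bigl( ( t, \infty ] \bigr) = S( \emptyset ) = 0$ by Proposition~\ref{prp:qst}; the integrand vanishes identically and the integral is zero. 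The second identity is handled in exactly the same way: after pulling $S\bigl( ( s, \infty ] \bigr)$ inside, the integrand contains $S\bigl( ( s, \infty ] \bigr) S\bigl( [ 0, t ] \bigr)$, which vanishes for $t \le s$ because $( s, \infty ]$ and $[ 0, t ]$ are then disjoint.

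The only delicate point is the justification for moving the spectral projection inside the gauge integral; once that is granted, everything collapses to the elementary orthogonality $S( A ) S( B ) = S( A \cap B )$ and the bookkeeping of which intervals are disjoint. I therefore expect this multiplier-through-the-integral step to be the main (though essentially routine) obstacle, the remainder being direct substitution.
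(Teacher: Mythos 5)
Your reduction of the proposition to the two identities $S\bigl( [ 0, s ] \bigr) ( E_S - E_{S \wedge s} ) = 0$ and $S\bigl( ( s, \infty ] \bigr) ( E_s - E_{S \wedge s} ) = 0$ is sound, as are your integral formulae for the two differences. The first identity is also correctly disposed of: the interchange there is exactly Lemma~\ref{lem:qsi}, with lower limit $s$, multiplier $Z = S\bigl( [ 0, s ] \bigr) \in \bop{\fock_{s)}}{} \otimes I_{[s}$, $W = I$, and the integrand $I_\mul \otimes S\bigl( ( t, \infty ] \bigr) E_t$ vacuum adapted and uniformly bounded. The genuine gap is the second interchange. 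Lemma~\ref{lem:qsi} requires the multiplier to act on $\fock_{t)}$, where $t$ is the \emph{lower} limit of integration, i.e., on the part of Fock space untouched by the noise being integrated against; in your second identity the integral runs over $[ 0, s ]$ while $S\bigl( ( s, \infty ] \bigr)$ acts on $\fock_{s)}$, precisely where the increments $\std \Lambda_t$ live. Your blanket principle --- that left multiplication by a bounded operator passes through a gauge integral --- is false: the integral is defined weakly on exponential vectors by (\ref{eqn:firstintid}), not as a strong limit of Riemann sums of the form you invoke, and $( I_\mul \otimes Z ) N_t$ need not even be an admissible (adapted) integrand. A concrete counterexample with exactly your adaptedness configuration: take $\mul = \C$, integrand $N_t = E_t$, and $Z = P^\Vac_{s)} \otimes I_{[s}$, the orthogonal projection of $\fock_{s)} \otimes \fock_{[s}$ onto $\C \evec{0|_{[ 0, s )}} \otimes \fock_{[s}$. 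Then $Z \int_0^s E_t \std \Lambda_t = Z ( E_s - E_0 ) = 0$ by (\ref{eqn:Eid}), whereas $Z E_t = E_0$ for all $t \le s$, so $\int_0^s Z E_t \std \Lambda_t = \int_0^s E_0 \std \Lambda_t \neq 0$.

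This is not a peripheral defect: granted the legitimate manipulations (including your first identity), the statement $S\bigl( ( s, \infty ] \bigr) ( E_s - E_{S \wedge s} ) = 0$ is equivalent to the proposition itself, so the unjustified step carries the whole content. The paper avoids integrals altogether: for a partition $\pi$ with $\pi_n = s$ it computes, from the series (\ref{eqn:es}) and the orthogonality relations of Proposition~\ref{prp:qst}, that $E_S^\pi E_s = S\bigl( [ 0, s ] \bigr) E_S^\pi + S\bigl( ( s, \infty ] \bigr) E_s$, then refines $\pi$ and invokes $E_S E_s = E_{S \wedge s}$ (Theorem~\ref{thm:exporder}(iv)). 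Your second identity can be rescued in the same discrete fashion --- for such a partition one reads off $S\bigl( ( s, \infty ] \bigr) E^\pi_{S \wedge s} = S\bigl( ( s, \infty ] \bigr) E_s$ term by term, and refinement gives the claim --- but that is the paper's argument, not the integral-representation one; within your framework the step remains unproved.
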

\begin{proof}
Without loss of generality, let $s \in \R_+$ and let $\pi$ be a
partition of $\R_+$ with $s = \pi_n$ for some $n \in \Z_+$. Then
\begin{align*}
E^\pi_S E_s & = S\bigl( \{ 0 \} \bigr) E_0 + %
\sum_{j = 1}^n S\bigl( ( \pi_{j - 1}, \pi_j ] \bigr) E_{\pi_j} + %
\sum_{j = n + 1}^\infty S\bigl( ( \pi_{j - 1}, \pi_j ] \bigr) E_s \\
 & \hspace{24em} + S\bigl( \{ \infty \} \bigr) E_s \\[1ex]
 & = S\bigl( [ 0, s ] \bigr) E_S^\pi + %
S\bigl( ( s, \infty ] \bigr) E_s.
\end{align*}
The claim now follows by refining $\pi$, since
$E_S E_s = E_{S \wedge s}$ by Theorem~\ref{thm:exporder}(iii).
\end{proof}

\section{The stopping algebras}\label{sec:algebras}

\begin{definition}\label{def:stopop}
Given $Z \in \bop{\fock}{}$ and a quantum stopping time $S$, let
\[
Z_{\wc{S}} := E_S Z E_S \in \bop{\fock}{}.
\]
Note that $E_S Z_{\wc{S}} E_S = Z_{\wc{S}} E_S$, and so $Z_{\wc{S}}$
maps $\fock_{S)}$ to itself. Remark~\ref{rem:qsconv} implies that the
mapping~$( Z, \, S ) \mapsto Z_{\wc{S}}$ is jointly continuous on the
product of any bounded subset of $\bop{\fock}{}$ with the collection
of all quantum stopping times, when $\bop{\fock}{}$ is equipped with
the strong operator topology and a net $( S_\lambda )$ of quantum
stopping times converges to the quantum stopping time $S$ if and only
if~$S_\lambda \qsto S$, as defined in Remark~\ref{rem:qsconv}.
\end{definition}

\begin{proposition}\label{prp:vaccondexp}
The map $Z \mapsto Z_{\wc{S}}$ is a conditional expectation from
$\bop{\fock}{}$ onto the norm-closed $*$-subalgebra
$\bopp_{\wc{S}} := \{ E_S Z E_S : Z \in \bop{\fock}{} \}$ that
preserves the \emph{vacuum state}
\begin{equation}\label{eqn:vacuumstate}
\expn_\Vac : \bop{\fock}{} \to \C; \ %
Z \mapsto \langle \evec{0}, Z \evec{0} \rangle.
\end{equation}
\end{proposition}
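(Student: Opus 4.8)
The plan is to recognise $\bopp_{\wc{S}}$ as the compression algebra $E_S \bop{\fock}{} E_S$ and to check directly that $Z \mapsto E_S Z E_S$ satisfies the defining properties of a conditional expectation, the only genuinely new ingredient being the observation that $E_S$ fixes the vacuum vector.

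First I would establish that $\bopp_{\wc{S}}$ is a norm-closed $*$-subalgebra with unit $E_S$. Since $E_S^2 = E_S = E_S^*$, the product $( E_S Z_1 E_S )( E_S Z_2 E_S ) = E_S ( Z_1 E_S Z_2 ) E_S$ again lies in $\bopp_{\wc{S}}$, and $( E_S Z E_S )^* = E_S Z^* E_S$ does too, so $\bopp_{\wc{S}}$ is a $*$-subalgebra on which $E_S$ acts as a two-sided identity. Norm-closedness holds because $\bopp_{\wc{S}}$ is the range of the contractive idempotent $Z \mapsto E_S Z E_S$; equivalently, compression by $E_S$ identifies it with $\bop{\fock_{S)}}{}$, which is a von~Neumann algebra. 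Note that the unit of $\bopp_{\wc{S}}$ is $E_S$ rather than $I$, so the relevant module property below is taken over $\bopp_{\wc{S}}$ itself.

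Next I would verify the conditional-expectation axioms for $\Phi : Z \mapsto E_S Z E_S$, all of which reduce to $E_S^2 = E_S$. The map is linear and, being of the form $Z \mapsto E_S^* Z E_S$, completely positive and contractive; it is idempotent, since $\Phi( \Phi( Z ) ) = E_S ( E_S Z E_S ) E_S = E_S Z E_S$; it restricts to the identity on $\bopp_{\wc{S}}$, since $\Phi( E_S W E_S ) = E_S W E_S$; and it satisfies the bimodule property $\Phi( B_1 Z B_2 ) = B_1 \Phi( Z ) B_2$ for all $B_1, B_2 \in \bopp_{\wc{S}}$, because, writing $B_i = E_S W_i E_S$, both sides equal $E_S W_1 E_S Z E_S W_2 E_S$. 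Thus $\Phi$ is a conditional expectation from $\bop{\fock}{}$ onto $\bopp_{\wc{S}}$.

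Finally, for preservation of the vacuum state I would show that $E_S \evec{0} = \evec{0}$. Since $E_t \evec{0} = \evec{1_{[ 0, t )} 0} = \evec{0}$ for every $t \in [ 0, \infty ]$, each Riemann sum $E_S^\pi$ in (\ref{eqn:es}) satisfies $E_S^\pi \evec{0} = S( [ 0, \infty ] ) \evec{0} = \evec{0}$, the sets indexing the sum forming a partition of $[ 0, \infty ]$; passing to the strong limit of Theorem~\ref{thm:expS} gives $E_S \evec{0} = \evec{0}$. Hence $\expn_\Vac( Z_{\wc{S}} ) = \langle E_S \evec{0}, Z E_S \evec{0} \rangle = \langle \evec{0}, Z \evec{0} \rangle = \expn_\Vac( Z )$, as required. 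The main point, such as it is, is this identity $E_S \evec{0} = \evec{0}$; the remaining verifications are routine consequences of $E_S$ being an orthogonal projection, so I do not expect any serious obstacle.
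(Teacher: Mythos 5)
Your proof is correct; the paper itself dismisses this proposition with ``This is a straightforward exercise,'' and your verification---identifying $\bopp_{\wc{S}}$ with the compression algebra $E_S \bop{\fock}{} E_S \cong \bop{\fock_{S)}}{}$, checking the conditional-expectation axioms from $E_S^2 = E_S = E_S^*$, and deducing $E_S \evec{0} = \evec{0}$ from $E_t \evec{0} = \evec{0}$ and the Riemann sums of Theorem~\ref{thm:expS}---is precisely the intended routine argument. Your explicit care over the fact that the unit of $\bopp_{\wc{S}}$ is $E_S$ rather than $I$, so that the bimodule property is taken over $\bopp_{\wc{S}}$ itself, is a worthwhile point that the paper's silence glosses over.
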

\begin{proof}
This is a straightforward exercise.
\end{proof}

\begin{remark}
The collection of stopped algebras
\[
\{ \bopp_{\wc{S}} : S \text{ is a quantum stopping time} \}
\]
has the following properties.
\begin{mylist}
\item[(i)] If $S$ is a quantum stopping time and
$Z \in \bopp_{\wc{S}}$ then $E_S Z E_S = Z E_S$, so $Z$ preserves the
pre-$S$ space $\fock_{S)}$.
\item[(ii)] If the quantum stopping times $S$ and $T$ satisfy
$S \le T$ then $\bopp_{\wc{S}} \subseteq \bopp_{\wc{T}}$, by
Theorem~\ref{thm:exporder}(i).
\item[(iii)] For deterministic stopping times,
\[
\bopp_{\wc{0}} = \im P^\Vac_{[0}, \qquad %
\bopp_{\wc{t}} = \bop{\fock_{t)}}{} \otimes P^\Vac_{[t} %
\qquad \text{and} \qquad \bopp_{\wc{\infty}} = \bop{\fock}{}
\]
for all $t \in ( 0, \infty )$, where
$P^\Vac_{[s} \in \bop{\fock_{[s}}{}$ is the orthogonal projection onto
the vacuum subspace $\C \evec{0|_{[ s, \infty )}}$ for all
$s \in \R_+$.
\end{mylist}
As noted by Coquio, this is impossible if we work instead in the
identity-adapted setting with the natural analogue of (iii): see
\cite[Proposition~2.2]{Coq06}.
\end{remark}

\section{Processes and martingales}\label{sec:processes}

\begin{definition}\label{def:mg}
A \emph{process} is a family
$X = ( X_t )_{t \in \R_+} \subseteq \bop{\fock}{}$. (We have no need
to impose any measurability or adaptedness conditions at this point.)
Two processes $X$ and $Y$ are equal if and only if $X_t = Y_t$ for all
$t \in \R_+$. The set of processes is a complex associative
$*$-algebra, where addition, multiplication and the adjoint are
defined pointwise.

Given $Z \in \bop{\fock}{}$, let $\wc{\pi}( Z )$ and $\wh{\pi}( Z )$
be the processes with initial values
\[
\wc{\pi}( Z )_0 = \expn_\Vac[ Z ] \, E_0 \qquad \text{and} \qquad %
\wh{\pi}( Z )_0 = \expn_\Vac[ Z ] \, I,
\]
and such that
\[
\wc{\pi}( Z )_t = E_t Z E_t \qquad \text{and} \qquad %
\wh{\pi}( Z )_t = E_t Z |_{\fock_{t)}} \otimes I_{[t} %
\qquad \text{for all } t \in ( 0, \infty ).
\]
Note that $t \mapsto \wc{\pi}( Z )_t$ and $t \mapsto \wh{\pi}( Z )_t$
are uniformly bounded and continuous on~$[ 0, \infty ]$ in the strong
operator topology, where
$\wc{\pi}( Z )_\infty = \wh{\pi}( Z )_\infty := Z$, for
any~$Z \in \bop{\fock}{}$. Note also that the maps
$Z \mapsto \wc{\pi}( Z )$ and $Z \mapsto \wh{\pi}( Z )$ are
$*$-algebra homomorphisms. We extend these definitions from operators
to processes by setting
\[
\wc{\pi}( X )_t := \wc{\pi}( X_t )_t \qquad \text{and} \qquad %
\wh{\pi}( X )_t = \wh{\pi}( X_t )_t \qquad %
\text{for all } t \in [ 0, \infty ],
\]
where $X$ is an arbitrary process; if not otherwise defined, we let
$X_\infty := 0$. For convenience, we will also consider processes
indexed by $[ 0, \infty ]$.

A process $X$ is \emph{adapted} if $X_t E_t = E_t X_t$ for all
$t \in \R_+$.

A process $X$ is \emph{vacuum adapted} if $X = \wc{\pi}( X )$; this is
equivalent to the requirement that $X_t E_t = X_t = E_t X_t$ for
all~$t \in \R_+$. Note that $\wc{\pi}( Z )$ is a vacuum-adapted
process for any $Z \in \bop{\fock}{}$.

A process $X$ is \emph{identity adapted} if $X = \wh{\pi}( X )$. Note
that $\wh{\pi}( Z )$ is an identity-adapted process for any
$Z \in \bop{\fock}{}$.

Vacuum-adapted and identity-adapted processes are adapted, and the
sets of adapted processes, vacuum-adapted processes and
identity-adapted processes are $*$-subalgebras of the $*$-algebra of
processes.

The process $M$ is a \emph{martingale} if it is adapted and
$E_s M_t E_s = M_s E_s$ for all $s$, $t \in \R_+$ with $s \le t$.

The martingale $M$ is \emph{closed} if
$\stlim\limits_{t \to \infty} M_t = M_\infty$ for some
$M_\infty \in \bop{\fock}{}$, where ``$\stlim$'' denotes the limit in
the strong operator topology.

The sets of martingales and closed martingales are subspaces of the
algebra of adapted processes.
\end{definition}

\begin{proposition}\label{prp:closedvacmg}
A process $X$ is a vacuum-adapted martingale closed by~$X_\infty$ if
and only if~$X = \wc{\pi}( X_\infty )$.
\end{proposition}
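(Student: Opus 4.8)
The plan is to prove both implications of this equivalence by unwinding the definitions. The statement to be shown is that a process $X$ is a vacuum-adapted martingale closed by $X_\infty$ precisely when $X = \wc{\pi}( X_\infty )$, that is, when $X_t = E_t X_\infty E_t$ for all $t \in ( 0, \infty )$ with the correct boundary values at $0$ and $\infty$. The reverse implication should be the easier direction, so I would begin with it. Assume $X = \wc{\pi}( X_\infty )$. Vacuum adaptedness is then immediate, since the final paragraph before the proposition notes that $\wc{\pi}( Z )$ is vacuum adapted for any $Z \in \bop{\fock}{}$, and strong continuity on $[ 0, \infty ]$ gives $\stlim_{t \to \infty} X_t = X_\infty$, so $X$ is closed by $X_\infty$. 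It remains to verify the martingale identity $E_s X_t E_s = X_s E_s$ for $s \le t$; substituting $X_t = E_t X_\infty E_t$ and $X_s = E_s X_\infty E_s$, this reduces to the relation $E_s E_t = E_t E_s = E_s$, which holds because $E_s \le E_t$ for $s \le t$ (the time projections are an increasing family; equivalently $E_s = E_{s \wedge t} = E_s E_t$ by Theorem~\ref{thm:exporder}(iv) applied to deterministic times).

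For the forward implication, suppose $X$ is a vacuum-adapted martingale closed by $X_\infty$. Vacuum adaptedness gives $X_t = E_t X_t E_t$ for all $t$, and the martingale property gives $E_t X_u E_t = X_t E_t$ for $t \le u$. The key step is to let $u \to \infty$ in the martingale identity: since $X$ is closed, $X_u \to X_\infty$ strongly, and because $E_t$ is a fixed bounded operator, $E_t X_u E_t \to E_t X_\infty E_t$ strongly as well. Hence $X_t E_t = E_t X_\infty E_t$ for each fixed $t \in ( 0, \infty )$. Using vacuum adaptedness once more, $X_t = E_t X_t = E_t ( X_t E_t ) = E_t E_t X_\infty E_t = E_t X_\infty E_t = \wc{\pi}( X_\infty )_t$, which is exactly the required identity on $( 0, \infty )$.

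The only remaining points are the boundary values. At $t = 0$ one checks that $X_0 = \expn_\Vac[ X_\infty ] E_0$: vacuum adaptedness forces $X_0 = E_0 X_0 E_0$, and since $E_0$ is the rank-one projection onto $\C \evec{0}$, this operator is a scalar multiple of $E_0$; taking the vacuum expectation and using $\expn_\Vac[ E_0 X_\infty E_0 ] = \expn_\Vac[ X_\infty ]$ identifies the scalar, in agreement with the definition $\wc{\pi}( X_\infty )_0 = \expn_\Vac[ X_\infty ] E_0$. At $t = \infty$ the closure hypothesis gives $X_\infty = \wc{\pi}( X_\infty )_\infty$ by definition. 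I do not anticipate a serious obstacle here; the main subtlety is simply the interchange of the strong limit $u \to \infty$ with the sandwiching projections $E_t$, which is routine because strong convergence is preserved under left and right multiplication by a fixed bounded operator.
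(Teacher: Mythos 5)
Your proof is correct and follows essentially the same route as the paper's: the reverse direction by the computation $E_s X_t E_s = E_s E_t X_\infty E_t E_s = E_s X_\infty E_s = X_s E_s$, and the forward direction by letting $u \to \infty$ in the martingale identity and invoking vacuum adaptedness. Your extra paragraph on the value at $t = 0$ is sound but not a departure, since $E_0 Z E_0 = \expn_\Vac[Z]\,E_0$ means the paper's computation at $s = 0$ already yields the prescribed initial value $\wc{\pi}(X_\infty)_0$.
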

\begin{proof}
Suppose $X = \wc{\pi}( X_\infty )$. Then $X$ is vacuum adapted, and if
$s$, $t \in \R_+$ are such that $s \le t$ then
\[
E_s X_t E_s = E_s E_t X_\infty E_t E_s = E_s X_\infty E_s = %
X_s = X_s E_s,
\]
so $X$ is a martingale. Furthermore,
$X_t = %
\wc{\pi}( X_\infty )_t \to \wc{\pi}( X_\infty )_\infty = X_\infty$
in the strong operator topology as $t \to \infty$, so $X$ is closed by
$X_\infty$.

Conversely, if $X$ is a vacuum-adapted martingale and
$X_\infty = \stlim\limits_{t \to \infty} X_t$ then, for all
$s \in \R_+$,
\[
\wc{\pi}( X_\infty )_s = E_s X_\infty E_s = %
\stlim_{t \to \infty} E_s X_t E_s = \stlim_{t \to \infty} X_s E_s = %
X_s E_s = X_s.
\qedhere
\]
\end{proof}

\begin{proposition}\label{prp:closedidmg}
A process $X$ is an identity-adapted martingale closed by~$X_\infty$
if and only if~$X = \wh{\pi}( X_\infty )$.
\end{proposition}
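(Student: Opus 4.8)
The plan is to follow the pattern of Proposition~\ref{prp:closedvacmg}, with $\wh{\pi}$ in place of $\wc{\pi}$, the only genuinely new ingredient being the tensor-factor bookkeeping produced by the identity-adapted compression. The fact I would isolate first is a pair of collapse identities relating $\wh{\pi}$ to $\wc{\pi}$ through the time projections: for every $Z \in \bop{\fock}{}$ and all $s \le t$,
\[
\wh{\pi}(Z)_s E_s = E_s \wh{\pi}(Z)_s = \wc{\pi}(Z)_s \qquad \text{and} \qquad E_s \wh{\pi}(Z)_t E_s = \wc{\pi}(Z)_s .
\]
To prove them I would work with the splitting $\fock = \fock_{s)} \otimes \fock_{[s,t)} \otimes \fock_{[t}$, under which $E_s = I_{s)} \otimes P^\Vac_{[s,t)} \otimes P^\Vac_{[t}$ and $E_t = I_{t)} \otimes P^\Vac_{[t}$. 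Writing the vacuum-adapted operator $E_t Z E_t$ in its factorised form $W_t \otimes P^\Vac_{[t}$ with $W_t \in \bop{\fock_{t)}}{}$, so that $\wh{\pi}(Z)_t = W_t \otimes I_{[t}$, and using $E_s = E_s E_t = E_t E_s$, a direct computation compresses the middle factor and yields $E_s \wh{\pi}(Z)_t E_s = E_s Z E_s = \wc{\pi}(Z)_s$; the first identity is the special case $t = s$.

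Granting these, the forward implication is immediate. If $X = \wh{\pi}(X_\infty)$ then $X$ is identity adapted by the definition of $\wh{\pi}$; for $s \le t$ the collapse identities give $E_s X_t E_s = \wc{\pi}(X_\infty)_s = X_s E_s$, which is exactly the martingale property, and the strong-operator continuity of $t \mapsto \wh{\pi}(X_\infty)_t$ on $[0,\infty]$ noted in Definition~\ref{def:mg}, with value $X_\infty$ at infinity, shows that $X$ is closed by $X_\infty$. The endpoint $s = 0$ is covered separately by the normalisation $\wh{\pi}(Z)_0 = \expn_\Vac[Z] I$, since $\expn_\Vac[\wh{\pi}(X_\infty)_t] = \expn_\Vac[X_\infty]$ for all $t$.

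For the converse, suppose $X$ is an identity-adapted martingale closed by $X_\infty$. The right-hand side of the martingale relation $E_s X_t E_s = X_s E_s$ does not depend on $t$, so letting $t \to \infty$ and using $X_t \to X_\infty$ strongly gives $E_s X_\infty E_s = X_s E_s$, that is $\wc{\pi}(X_\infty)_s = X_s E_s$; because the right-hand side is constant in $t$ no uniform bound on $\| X_t \|$ is needed. Now write $\wc{\pi}(X_\infty)_s = W_s \otimes P^\Vac_{[s}$, so that $\wh{\pi}(X_\infty)_s = W_s \otimes I_{[s}$, and use identity adaptedness to write $X_s = Y_s \otimes I_{[s}$ with $Y_s \in \bop{\fock_{s)}}{}$, whence $X_s E_s = Y_s \otimes P^\Vac_{[s}$. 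Cancelling the nonzero factor $P^\Vac_{[s}$ from $Y_s \otimes P^\Vac_{[s} = W_s \otimes P^\Vac_{[s}$ gives $Y_s = W_s$ and hence $X_s = \wh{\pi}(X_\infty)_s$; the case $s = 0$ again follows by comparing vacuum expectations, using $\expn_\Vac[X_t] = \expn_\Vac[X_0]$ from the martingale property at $s = 0$ and letting $t \to \infty$.

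I expect the main obstacle to be the careful verification of the collapse identity $E_s \wh{\pi}(Z)_t E_s = \wc{\pi}(Z)_s$, where all of the tensor-product bookkeeping is concentrated; once it is available both implications are short, and indeed the whole argument can be read as saying that $\wh{\pi}(X_\infty)$ and the vacuum-adapted martingale $\wc{\pi}(X_\infty)$ of Proposition~\ref{prp:closedvacmg} share the common compression $A \mapsto E_s A E_s$, with identity adaptedness then fixing the lift uniquely.
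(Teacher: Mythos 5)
Your proposal is correct and takes essentially the same route as the paper: the forward direction rests on the collapse identity $E_s \wh{\pi}(Z)_t E_s = E_s Z E_s = \wh{\pi}(Z)_s E_s$ plus strong-operator continuity of $t \mapsto \wh{\pi}(X_\infty)_t$, and the converse first lets $t \to \infty$ in $E_s X_t E_s = X_s E_s$ to get $\wc{\pi}(X_\infty)_s = X_s E_s$ and then lifts this back using identity adaptedness. The only difference is presentational: where you recover $X_s = \wh{\pi}(X_\infty)_s$ by writing both sides in factorised form and cancelling the non-zero factor $P^\Vac_{[s}$, the paper applies the map $\wh{\pi}(\,\cdot\,)_s$ to the identity $\wh{\pi}(X_\infty)_s E_s = X_s E_s$ and uses $\wh{\pi}(X_s E_s)_s = \wh{\pi}(X_s)_s = X_s$ -- the same unique-lift fact in homomorphism language.
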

\begin{proof}
Suppose $X = \wh{\pi}( X_\infty )$. Then $X$ is identity adapted, and
if $s$, $t \in \R_+$ are such that $s \le t$ then
\[
E_s X_t E_s = E_s \wh{\pi}( X_\infty )_t E_s = E_s X_\infty E_s = %
\wh{\pi}( X_\infty )_s E_s,
\]
so $X$ is a martingale. Furthermore,
$X_t = %
\wh{\pi}( X_\infty )_t \to \wh{\pi}( X_\infty )_\infty = X_\infty$
in the strong operator topology as $t \to \infty$, so $X$ is closed by
$X_\infty$.

Conversely, if $X$ is an identity-adapted martingale and
$X_\infty = \stlim_{t \to \infty} X_t$ then, for all~$s \in \R_+$,
\[
\wh{\pi}( X_\infty )_s E_s = %
E_s X_\infty E_s = \stlim_{t \to \infty} E_s X_t E_s = X_s E_s
\]
and
\[
\wh{\pi}( X_\infty )_s = %
\wh{\pi}( \wh{\pi}( X_\infty )_s E_s )_s = \wh{\pi}( X_s E_s )_s = %
\wh{\pi}( X_s )_s = X_s.
\qedhere
\]
\end{proof}

\begin{proposition}\label{prp:vmg}
If $Z \in \bop{\fock}{}$ and $S$ is a quantum stopping time then
the process $Z^{\wc{S}} := ( Z_{\wc{S \wedge t}} )_{t \in \R_+}$ is a
vacuum-adapted martingale closed by~$Z_{\wc{S}}$.
\end{proposition}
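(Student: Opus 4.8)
The plan is to show that the process $Z^{\wc{S}}$ coincides with $\wc{\pi}( Z_{\wc{S}} )$, where $Z_{\wc{S}} = E_S Z E_S$, and then to read off the conclusion from Proposition~\ref{prp:closedvacmg}, which asserts precisely that a process equals $\wc{\pi}( W )$ if and only if it is a vacuum-adapted martingale closed by $W$. In this way no direct verification of the adaptedness and martingale conditions is needed once the identification has been made.

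The engine of the identification is Theorem~\ref{thm:exporder}(iv), namely $E_t E_S = E_{S \wedge t} = E_S E_t$ for every $t \in \R_+$. For $t \in ( 0, \infty )$ I would compute
\[
\wc{\pi}( Z_{\wc{S}} )_t = E_t Z_{\wc{S}} E_t = ( E_t E_S )\, Z\, ( E_S E_t ) = E_{S \wedge t} Z E_{S \wedge t} = Z_{\wc{S \wedge t}} = Z^{\wc{S}}_t,
\]
which is the whole content of the proposition away from the endpoints.

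The points requiring a little care are the initial value and the closure. At $t = 0$ the process $\wc{\pi}( Z_{\wc{S}} )$ is defined through the vacuum state, $\wc{\pi}( Z_{\wc{S}} )_0 = \expn_\Vac[ Z_{\wc{S}} ]\, E_0$; here I would use that $E_0 \le E_S$ (from $0 \le S$ and Theorem~\ref{thm:exporder}(i)) to deduce $E_S \evec{0} = \evec{0}$, whence $\expn_\Vac[ Z_{\wc{S}} ] = \langle \evec{0}, E_S Z E_S \evec{0} \rangle = \expn_\Vac[ Z ]$. Since $S \wedge 0$ is the deterministic time $0$, so that $E_{S \wedge 0} = E_0$, it follows that $Z^{\wc{S}}_0 = E_0 Z E_0 = \expn_\Vac[ Z ]\, E_0$, and the two values agree. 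The closure is then immediate, as $\wc{\pi}( Z_{\wc{S}} )_\infty = Z_{\wc{S}}$ by definition, so Proposition~\ref{prp:closedvacmg} yields that $Z^{\wc{S}} = \wc{\pi}( Z_{\wc{S}} )$ is a vacuum-adapted martingale closed by $Z_{\wc{S}}$. Given Theorem~\ref{thm:exporder}(iv) the argument is short; the one thing one must not overlook is the separate treatment of $t = 0$, where the process is prescribed via the vacuum state rather than by conjugation with $E_t$, and the reconciliation of these two descriptions is the only mildly delicate step.
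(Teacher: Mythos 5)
Your proof is correct and takes essentially the same approach as the paper: identify $Z^{\wc{S}}$ with $\wc{\pi}\bigl( Z_{\wc{S}} \bigr)$ via $E_t E_S = E_{S \wedge t} = E_S E_t$ and then invoke Proposition~\ref{prp:closedvacmg}. Your separate treatment of $t = 0$ and your citation of Theorem~\ref{thm:exporder}(iv) (the paper cites part (ii), though (iv) is what the product computation actually uses) are, if anything, slightly more precise than the paper's one-line argument.
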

\begin{proof}
If $t \in \R_+$ then Theorem~\ref{thm:exporder}(ii) implies that
\[
E_t Z_{\wc{S}} E_t = E_t E_S Z E_S E_t = E_{S \wedge t} Z E_{S \wedge t} = %
Z^{\wc{S}}_t,
\]
so the result follows from Proposition~\ref{prp:closedvacmg}.
\end{proof}

\begin{remark}
The process $Z^{\wc{S}}$ remains a vacuum-adapted martingale if
$( E_t )_{t \in \R_+}$ is replaced
by~$( E_{S \wedge t} )_{t \in \R_+}$ in Definition~\ref{def:mg}, as
the analogue of Proposition~\ref{prp:closedvacmg} holds.
\end{remark}

\section{Stopping processes at discrete times}\label{sec:discrete}

\begin{definition}
A quantum stopping time $T$ is said to be \emph{discrete} if there
exists a finite set of
times~$\{ t_1 < \ldots < t_n \} \subseteq [ 0, \infty ]$,
called the \emph{support} of $T$, such that
$T\bigl( \{ t_i \} \bigr) \neq 0$ for $i = 1$, \ldots, $n$ and
$T\bigl( \{ t_1, \ldots, t_n \} \bigr) = I$. Note that
\[
T\bigl( [ 0, t ] \bigr) = \left\{ \begin{array}{cl}
 0 & \text{if } t < t_1, \\[1ex]
 T\bigl( \{ t_1, \ldots, t_m \} ) & \text{if } %
t \in [ t_m, t_{m + 1} ) \quad ( m = 1, \ldots, n - 1 ), \\[1ex]
 I & \text{if } t \ge t_n
\end{array}\right.
\]
for all $t \in [ 0, \infty ]$. In particular, $t_1 \le T \le t_n$.
\end{definition}

\begin{definition}\label{def:discrete_stop}
If $X$ is a process and $T$ is a discrete quantum stopping time with
support $\{ t_1, \ldots, t_n \}$ then Coquio
\cite[Definition~3.2(2)]{Coq06} stops $X$ at $T$ by setting
\begin{align*}
X_{\wh{T}} & := %
\sum_{i, j = 1}^n \wh{\pi}( E_{t_i} )_{t_i \vee t_j} T_i %
\wh{\pi}( X )_{t_i \vee t_j} T_j %
\wh{\pi}( E_{t_j} )_{t_i \vee t_j} \\[1ex]
 & \hphantom{:}= \sum_{i, j = 1}^n \wh{\pi}\bigl( %
E_{t_i} T_i X_{t_i \vee t_j} T_j E_{t_j} \bigr)_{t_i \vee t_j},
\end{align*}
where $T_i := T\bigl( \{ t_i \} \bigr)$ for $i = 1$, \ldots
$n$. Recall that $X_\infty := 0$ if it is not otherwise defined.

Note that if $t \in [ 0, \infty ]$ then
$X_{\wh{t}} = \wh{\pi}( X )_t$.  Furthermore, $E_{\wh{T}} = I$ and,
considering $I$ as a constant process, $I_{\wh{T}} = I$.
\end{definition}

\begin{remark}
Coquio only considers identity-adapted processes, for which her
definition agrees with Definition~\ref{def:discrete_stop} above. She
prefers the notation $M_T( Z )$ when stopping an operator; as we do
not require processes to be adapted, so may identify operators with
constant processes, our choice of notation seems more convenient.
\end{remark}

\begin{definition}\label{def:discstop}
Let $X$ be a process and let $T$ be a discrete quantum stopping time
with support $\{ t_1 < \ldots < t_n \}$. The result of applying
vacuum-adapted stopping to~$X$ at~$T$ is
\[
X_{\wc{T}} := %
\sum_{i, j = 1}^n E_{t_i} T_i X_{t_i \vee t_j} T_j E_{t_j} = %
\sum_{i, j = 1}^n T_i E_{t_i} X_{t_i \vee t_j} E_{t_j} T_j,
\]
where $T_i := T\bigl( \{ t_i \} \bigr)$ for $i = 1$, \ldots, $n$;
again, let $X_\infty := 0$ if necessary. This is analogous to Coquio's
definition for the identity-adapted case.

Note that if $t \in [ 0, \infty ]$ then
$X_{\wc{t}} = \wc{\pi}( X )_t$. Furthermore, if the process $X$ has
the constant value~$Z$ then $X_{\wc{T}} = E_T Z E_T = Z_{\wc{T}}$ in
the sense of Definition~\ref{def:stopop}; in particular,
$I_{\wc{T}} = E_T$. It also follows from the definition that
$E_{\wc{T}} = E_T$.
\end{definition}

\begin{proposition}\label{prp:twostops}
If $X$ is a process and $T$ is a discrete quantum stopping time then
\[
( X_{\wc{T}} )_{\wc{T}} = X_{\wc{T}}, \qquad %
( X_{\wh{T}} )_{\wc{T}} = X_{\wc{T}}, \qquad %
( X_{\wc{T}} )_{\wh{T}} = X_{\wh{T}} \qquad \text{and} \qquad %
( X_{\wh{T}} )_{\wh{T}} = X_{\wh{T}},
\]
where $X_{\wh{T}}$ and $X_{\wc{T}}$ are stopped by regarding them as
constant processes.
\end{proposition}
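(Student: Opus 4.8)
The plan is to recast both stopping procedures in terms of the projections $P_i := E_{t_i}T_i$, where $T_i := T\bigl(\{t_i\}\bigr)$. Since $T_i \in \bop{\fock_{t_i)}}{}\otimes I_{[t_i}$ while $E_{t_i} = I_{t_i)}\otimes P^\Vac_{[t_i}$, the two factors commute, so each $P_i = E_{t_i}T_i = T_iE_{t_i}$ is an orthogonal projection; Proposition~\ref{prp:qst} gives $T_kT_i = \delta_{ki}T_i$, whence
\[
P_kP_i = E_{t_k}T_kT_iE_{t_i} = \delta_{ki}P_i,
\]
and refining a partition to the support of $T$ in Theorem~\ref{thm:expS} yields $E_T = \sum_i P_i$. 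With these identities the definitions in Section~\ref{sec:discrete} become $X_{\wc{T}} = \sum_{i,j} P_i X_{t_i \vee t_j} P_j$ and $X_{\wh{T}} = \sum_{i,j} \wh{\pi}\bigl(P_i X_{t_i \vee t_j} P_j\bigr)_{t_i \vee t_j}$, and stopping a constant process with value $Z$ produces $Z_{\wc{T}} = E_T Z E_T$.

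The first and third identities are then immediate. As $E_T P_i = P_i = P_i E_T$, one reads off $(X_{\wc{T}})_{\wc{T}} = E_T X_{\wc{T}} E_T = X_{\wc{T}}$; and since $P_k X_{\wc{T}} P_l = \sum_{i,j} P_k P_i X_{t_i \vee t_j} P_j P_l = P_k X_{t_k \vee t_l} P_l$, applying $\wh{\pi}$ gives $(X_{\wc{T}})_{\wh{T}} = \sum_{k,l} \wh{\pi}\bigl(P_k X_{t_k \vee t_l} P_l\bigr)_{t_k \vee t_l} = X_{\wh{T}}$.

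The second identity is the crux, and I would reduce it to the termwise claim $E_T\,\wh{\pi}(P_i X_t P_j)_t\,E_T = P_i X_t P_j$ at $t := t_i \vee t_j$. The engine is the relation $E_T\,\wh{\pi}(P_i)_t = P_i = \wh{\pi}(P_i)_t\,E_T$ for $t \ge t_i$, which I would prove by splitting $E_T = \sum_{t_k \le t} P_k + \sum_{t_k > t} P_k$. For $t_k \le t$ one has $P_k \le E_{t_k} \le E_t$, so, using $E_t\,\wh{\pi}(Z)_t = \wc{\pi}(Z)_t = E_t Z E_t$ (Definition~\ref{def:mg}),
\[
P_k\,\wh{\pi}(P_i)_t = P_k E_t\,\wh{\pi}(P_i)_t = P_k E_t P_i E_t = P_k P_i = \delta_{ki} P_i.
\]
The contributions with $t_k > t$ must vanish, and this is the step I expect to be the main obstacle: one has to see that a stopping point strictly beyond $t$ cannot interfere with $\wh{\pi}(P_i)_t$. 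The clean way is to observe that $\wh{\pi}(P_i)_t = T_i\,\wh{\pi}(E_{t_i})_t$ for $t \ge t_i$ --- both sides equal $a_i \otimes P^\Vac_{[t_i, t)} \otimes I_{[t}$ in the splitting $\fock = \fock_{t_i)} \otimes \fock_{[t_i, t)} \otimes \fock_{[t}$, where $T_i = a_i \otimes I_{[t_i}$ and $\wh{\pi}(E_{t_i})_t = I_{t_i)} \otimes P^\Vac_{[t_i, t)} \otimes I_{[t}$ --- so that for $t_k > t \ge t_i$ (hence $k \ne i$),
\[
P_k\,\wh{\pi}(P_i)_t = E_{t_k} T_k\,T_i\,\wh{\pi}(E_{t_i})_t = 0
\]
by Proposition~\ref{prp:qst}. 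Summing the two cases gives $E_T\,\wh{\pi}(P_i)_t = P_i$, and the adjoint statement follows because $\wh{\pi}(P_i)_t$ and $E_T$ are self-adjoint. Factorising $\wh{\pi}(P_i X_t P_j)_t = \wh{\pi}(P_i)_t\,\wh{\pi}(X_t)_t\,\wh{\pi}(P_j)_t$ (valid because $P_i, P_j \in \bop{\fock_{t)}}{} \otimes P^\Vac_{[t}$) and sandwiching, the termwise claim reduces to $P_i\,\wh{\pi}(X_t)_t\,P_j = P_i E_t\,\wh{\pi}(X_t)_t\,E_t P_j = P_i X_t P_j$. Summing over $i, j$ then yields $(X_{\wh{T}})_{\wc{T}} = E_T X_{\wh{T}} E_T = X_{\wc{T}}$.

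Finally, the fourth identity follows formally from the second and third, since the third holds for every process. Applying it to the constant process with value $X_{\wh{T}}$ gives $\bigl((X_{\wh{T}})_{\wc{T}}\bigr)_{\wh{T}} = (X_{\wh{T}})_{\wh{T}}$; by the second identity $(X_{\wh{T}})_{\wc{T}} = X_{\wc{T}}$, so the left-hand side is $(X_{\wc{T}})_{\wh{T}}$, which equals $X_{\wh{T}}$ by the third identity. Hence $(X_{\wh{T}})_{\wh{T}} = X_{\wh{T}}$.
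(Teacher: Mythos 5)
Your proposal is correct and takes essentially the same route as the paper: the paper's own (one-line) proof cites precisely the two absorption identities you establish, namely $E_r \wh{\pi}( E_r )_s = E_r = \wh{\pi}( E_r )_s E_r$ for $r \le s$ (your engine $E_T \wh{\pi}( P_i )_t = P_i = \wh{\pi}( P_i )_t E_T$, dressed with the projections $T_i$ and the orthogonality $T_k T_i = \delta_{ki} T_i$) and $E_r \wh{\pi}( X_{r \vee s} )_{r \vee s} E_s = E_r X_{r \vee s} E_s$ (your termwise reduction $P_i \wh{\pi}( X_t )_t P_j = P_i X_t P_j$), and declares the rest immediate. You have simply written out in full the bookkeeping with $P_i := E_{t_i} T_i$ and $E_T = \sum_i P_i$ that the paper suppresses.
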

\begin{proof}
This follows because
$E_r \wh{\pi}( E_r )_s = E_r = \wh{\pi}( E_r )_s E_r$ for
all~$r$, $s \in [ 0, \infty ]$ such that $r \le s$, and
$E_r \wh{\pi}( X_{r \vee s} )_{r \vee s} E_s = E_r X_{r \vee s} E_s$
for all $r$, $s \in [ 0, \infty ]$.
\end{proof}

\begin{proposition}\label{prp:procSad}
If $T$ is a discrete quantum stopping time and $X$ is a process then
\[
T\bigl( [ 0, t ] \bigr) X_{\wc{T}} T\bigl( [ 0, t ] \bigr) = %
T\bigl( [ 0, t ] \bigr) X_{\wc{T \wedge t}} T\bigl( [ 0, t ] \bigr),
\]
whereas
\[
X_{\wh{T}} E_T = E_T X_{\wh{T}} = X_{\wc{T}} %
\quad \text{and} \quad %
T\bigl( [ 0, t ] \bigr) X_{\wh{T}} T\bigl( [ 0, t ] \bigr) = %
T\bigl( [ 0, t ] \bigr) X_{\wh{T \wedge t}} T\bigl( [ 0, t ] \bigr),
\]
for all $t \in [ 0, \infty ]$.
\end{proposition}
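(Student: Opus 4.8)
The plan is to establish each of the three identities directly from the definitions (\ref{eqn:vacdisc}) of $X_{\wc{T}}$ and the formula for $X_{\wh{T}}$ in Definition~\ref{def:discrete_stop}, making repeated use of the orthogonality and commutation properties of the spectral measure from Proposition~\ref{prp:qst} together with the basic identities relating $T\bigl([0,t]\bigr)$, the $T_i$ and the time projections $E_{t_i}$. Writing $\{t_1<\cdots<t_n\}$ for the support of $T$, the central algebraic facts I would exploit are that $T_iT_j=\delta_{ij}T_i$ (disjointness of singletons), that $T\bigl([0,t]\bigr)=\sum_{t_k\le t}T_k$, and that $T_i$ is identity adapted up to time $t_i$ so commutes appropriately with $E_{t_i}$ and with $T\bigl([0,t]\bigr)$.

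First I would treat the two statements about $X_{\wh{T}}$, since they are the ones that connect the identity-adapted stop to the vacuum-adapted stop. For $X_{\wh{T}}E_T=X_{\wc{T}}$ I would right-multiply the double-sum expression for $X_{\wh{T}}$ by $E_T$ and use that $\wh{\pi}(E_{t_j})_{t_i\vee t_j}E_T$ collapses via the relation $E_{t_j}E_T=E_{T\wedge t_j}$ from Theorem~\ref{thm:exporder}(iv), absorbing the "hat" decorations so that each summand $\wh{\pi}\bigl(E_{t_i}T_iX_{t_i\vee t_j}T_jE_{t_j}\bigr)_{t_i\vee t_j}E_T$ reduces to $E_{t_i}T_iX_{t_i\vee t_j}T_jE_{t_j}$; summing over $i,j$ then yields $X_{\wc{T}}$ by (\ref{eqn:vacdisc}). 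The identity $E_TX_{\wh{T}}=X_{\wc{T}}$ follows symmetrically (or by taking adjoints after passing to $X^*$). The commutation statement $T\bigl([0,t]\bigr)X_{\wh{T}}T\bigl([0,t]\bigr)=T\bigl([0,t]\bigr)X_{\wh{T\wedge t}}T\bigl([0,t]\bigr)$ I would prove by conjugating the double sum by $T\bigl([0,t]\bigr)$: because $T\bigl([0,t]\bigr)T_i=T_i$ when $t_i\le t$ and $=0$ when $t_i>t$, only terms with $t_i,t_j\le t$ survive, and on those terms $X_{t_i\vee t_j}$ is replaced in $X_{\wh{T\wedge t}}$ by exactly the same operator since $t_i\vee t_j\le t<s$ forces the truncated stopping time to agree with $T$ below $t$.

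The first identity, $T\bigl([0,t]\bigr)X_{\wc{T}}T\bigl([0,t]\bigr)=T\bigl([0,t]\bigr)X_{\wc{T\wedge t}}T\bigl([0,t]\bigr)$, is handled by the same truncation argument applied to (\ref{eqn:vacdisc}): conjugation by $T\bigl([0,t]\bigr)$ annihilates every summand in which $t_i>t$ or $t_j>t$, and the surviving summands, those with $t_i,t_j\le t$, coincide with the corresponding summands of $X_{\wc{T\wedge t}}$ because the spectral data of $T\wedge t$ and of $T$ agree on $[0,t]$ and because $t_i\vee t_j\le t$ means $X_{t_i\vee t_j}$ is unchanged. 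The main obstacle to watch is bookkeeping at the boundary of the two definitions of $X_{\wh{T}}$, namely checking that the $\wh{\pi}(\,\cdot\,)_{t_i\vee t_j}$ wrappers interact correctly with $E_T$ and with $T\bigl([0,t]\bigr)$; this reduces to the elementary relations $E_r\wh{\pi}(E_r)_s=E_r=\wh{\pi}(E_r)_sE_r$ and $E_r\wh{\pi}(X_{r\vee s})_{r\vee s}E_s=E_rX_{r\vee s}E_s$ already recorded in the proof of Proposition~\ref{prp:twostops}, so once those are invoked the computation is routine.
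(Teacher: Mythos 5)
Your proposal is correct, and for two of the three identities it genuinely departs from the paper. For the first identity the paper argues exactly as you do: using the form $X_{\wc{T}}=\sum_{i,j}T_iE_{t_i}X_{t_i\vee t_j}E_{t_j}T_j$ from (\ref{eqn:vacdisc}), conjugation by $T\bigl([0,t]\bigr)$ kills every summand with $t_i>t$ or $t_j>t$ via $T\bigl([0,t]\bigr)T_i=0$, and the surviving sums for $T$ and $T\wedge t$ agree, the only boundary point needing care being the mass of $T\wedge t$ at $t$ itself, for which $T\bigl([0,t]\bigr)(T\wedge t)\bigl(\{t\}\bigr)=T\bigl(\{t\}\bigr)$. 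For the two identities involving $X_{\wh{T}}$, however, the paper gives no argument at all: it cites \cite[Properties~3.3(1--2)]{Coq06}. Your direct verification therefore makes the proposition self-contained, which is a real gain, and its overall shape (expand $E_T=\sum_kT_kE_{t_k}$, use $T_iT_j=\delta_{ij}T_i$, collapse the hats) is sound. Your conjugation argument for the last identity also works, provided you first commute each mass $T_i$ past $\wh{\pi}(E_{t_i})_{t_i\vee t_j}$ — legitimate, since they act on complementary factors of the continuous tensor product — so that the masses sit outermost before multiplying by $T\bigl([0,t]\bigr)$.

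One step is stated too loosely to stand as written: with $s:=t_i\vee t_j$, the operator $\wh{\pi}(E_{t_j})_sE_T$ does not collapse ``via $E_{t_j}E_T=E_{T\wedge t_j}$''. The projection $\wh{\pi}(E_{t_j})_s=I_{t_j)}\otimes P^\Vac_{[t_j,s)}\otimes I_{[s}$ strictly dominates $E_{t_j}$, since it acts as the identity on $\fock_{[s}$, and $E_T$ contains the terms $T_kE_{t_k}$ with $t_k>s$, about which Theorem~\ref{thm:exporder}(iv) says nothing. What you need, and what is true, is the collapse with the adjacent mass attached:
\[
T_j\,\wh{\pi}(E_{t_j})_s\,E_T=T_jE_{t_j},
\]
proved by checking $T_j\wh{\pi}(E_{t_j})_sT_kE_{t_k}=\delta_{jk}T_jE_{t_j}$ for each $k$. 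For $t_k\le t_j$ this is immediate from commutation and $T_jT_k=\delta_{jk}T_j$; for $t_k>t_j$ the orthogonality must be pushed through the vacuum projections: writing $T_j=T_j'\otimes I_{[t_j}$ and $T_k=T_k''\otimes I_{[t_k}$, the relation $T_jT_k=0$ gives $\bigl(T_j'\otimes I_{[t_j,t_k)}\bigr)T_k''=0$ on $\fock_{t_k)}$, and left multiplication by $I_{t_j)}\otimes P^\Vac_{[t_j,t_k)}$ preserves this, which is exactly what the product requires. With that lemma in place, each summand of $X_{\wh{T}}E_T$ does reduce to $E_{t_i}T_iX_{t_i\vee t_j}T_jE_{t_j}$ — the remaining bookkeeping is handled by the relations you quote from the proof of Proposition~\ref{prp:twostops} together with $E_{t_j}=E_sE_{t_j}$ and $\wh{\pi}(Z)_sE_s=E_sZE_s$ — so $X_{\wh{T}}E_T=X_{\wc{T}}$, and $E_TX_{\wh{T}}=X_{\wc{T}}$ follows by the symmetric computation. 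Your proof is complete once this orthogonality lemma is made explicit.
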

\begin{proof}
Without loss of generality, suppose that $t \in [ t_k, t_{k +1} )$,
where $t_0 := 0$ and~$t_{n + 1} := \infty$. Since
$T\bigl( [ 0, t ] \bigr)( T \wedge t )\bigl( \{ t \} \bigr) = %
T\bigl( \{ t \} \bigr)$
and the support of~$T \wedge t$ has maximum element~$t$, it follows
that
\[
T\bigl( [ 0, t ] \bigr) X_{\wc{T}} T\bigl( [ 0, t ] \bigr) = %
\sum_{i, j = 1}^k E_{t_i} T_i X_{t_i \vee t_j} T_j E_{t_j} = %
T\bigl( [ 0, t ] \bigr) X_{\wc{T \wedge t}} T\bigl( [ 0, t ] \bigr).
\]
The other identities are contained in
\cite[Properties~3.3(1--2)]{Coq06}.
\end{proof}

\begin{proposition}\label{prp:exe}
If $T$ is a discrete quantum stopping time and $X$ is a process then
\[
\wc{\pi}( X_{\wc{T \wedge t}} )_t = X_{\wc{T \wedge t}} %
\qquad \text{and} \qquad
\wh{\pi}( X_{\wh{T \wedge t}} )_t = X_{\wh{T \wedge t}} %
\qquad \text{for all } t \in [ 0, \infty ],
\]
so the processes $X^{\wc{T}} := ( X_{\wc{T \wedge t}} )_{t \in \R_+}$
and $X^{\wh{T}} := ( X_{\wh{T \wedge t}} )_{t \in \R_+}$ are vacuum
adapted and identity adapted, respectively.
\end{proposition}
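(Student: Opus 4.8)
The plan is to deduce both adaptedness statements from the single structural observation that, by Theorem~\ref{thm:qstorder}, the discrete stopping time $T \wedge t$ is again discrete with support contained in $[ 0, t ]$; consequently every index $s_i$, $s_j$ appearing in the defining sum (\ref{eqn:vacdisc}) for $X_{\wc{T \wedge t}}$ and in Definition~\ref{def:discrete_stop} for $X_{\wh{T \wedge t}}$ satisfies $s_i, s_j \le t$, and in particular $s_i \vee s_j \le t$. Granting the two displayed identities, the adaptedness conclusions are immediate: since $E_t$ is a projection, $E_t X_{\wc{T \wedge t}} E_t = X_{\wc{T \wedge t}}$ yields $E_t X_{\wc{T \wedge t}} = X_{\wc{T \wedge t}} = X_{\wc{T \wedge t}} E_t$, which is exactly vacuum adaptedness of $X^{\wc{T}}$; and $\wh{\pi}( X_{\wh{T \wedge t}} )_t = X_{\wh{T \wedge t}}$ is precisely the statement $X^{\wh{T}}_t = \wh{\pi}( X^{\wh{T}} )_t$, \ie identity adaptedness.

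For the vacuum-adapted identity I would write the support of $T \wedge t$ as $\{ s_1 < \cdots < s_m \} \subseteq [ 0, t ]$ and read off from (\ref{eqn:vacdisc}) that $X_{\wc{T \wedge t}}$ is a finite sum of terms of the form $E_{s_i} ( \cdots ) E_{s_j}$. The family $( E_r )_{r \in \R_+}$ is increasing (as established in the proof of Theorem~\ref{thm:expS}, where $E_t - E_s$ is shown to be a projection for $s \le t$, and $E_0 \le E_t$ since $\C \evec{0} \subseteq \im E_t$), so $E_{s_i} \le E_t$ and $E_{s_j} \le E_t$, whence $E_t E_{s_i} = E_{s_i}$ and $E_{s_j} E_t = E_{s_j}$. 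Multiplying the sum on each side by $E_t$ and applying these relations termwise gives $E_t X_{\wc{T \wedge t}} E_t = X_{\wc{T \wedge t}}$.

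For the identity-adapted case the key point is to locate each summand. Every term of $X_{\wh{T \wedge t}}$ has the form $\wh{\pi}( \, \cdot \, )_{s_i \vee s_j}$ with $s_i \vee s_j \le t$, and by definition $\wh{\pi}( Z )_r \in \bop{\fock_{r)}}{} \otimes I_{[r} \subseteq \bop{\fock_{t)}}{} \otimes I_{[t}$ whenever $r \le t$ (using $\fock_{t)} = \fock_{r)} \otimes \fock_{[ r, t )}$); hence $X_{\wh{T \wedge t}} \in \bop{\fock_{t)}}{} \otimes I_{[t}$. It then remains to verify that $\wh{\pi}( \, \cdot \, )_t$ fixes such operators, \ie $\wh{\pi}( A \otimes I_{[t} )_t = A \otimes I_{[t}$ for $A \in \bop{\fock_{t)}}{}$; this is the short computation $E_t ( A \otimes I_{[t} ) = A \otimes P^\Vac_{[t}$, whose action on $\fock_{t)} \otimes \evec{0}$ recovers $A$. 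Applying this with $Z = X_{\wh{T \wedge t}}$ gives $\wh{\pi}( X_{\wh{T \wedge t}} )_t = X_{\wh{T \wedge t}}$.

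The calculations are essentially bookkeeping; the only places needing care are the correct description of $T \wedge t$ — in particular its atom $( T \wedge t )\bigl( \{ t \} \bigr) = I - T\bigl( [ 0, t ) \bigr)$, which may vanish and whose finiteness is exactly where discreteness of $T$ enters — and the endpoint $t = 0$, where $\wh{\pi}( \, \cdot \, )_0$ is defined separately through the vacuum expectation and so the identity $\wh{\pi}( X_{\wh{T \wedge 0}} )_0 = X_{\wh{T \wedge 0}}$ must be checked directly (here $T \wedge 0$ is the deterministic time $0$ and both sides reduce to $\expn_\Vac[ X_0 ] \, I$).
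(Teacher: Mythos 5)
Your proof is correct, but it follows a genuinely different route from the paper's. For the vacuum-adapted identity the paper argues structurally: since $T \wedge t$ is discrete and $T \wedge t \le t$ (Theorem~\ref{thm:qstorder}), it combines the compression identity $X_{\wc{T \wedge t}} = E_{T \wedge t} X_{\wc{T \wedge t}} E_{T \wedge t}$ with $E_t E_{T \wedge t} = E_{T \wedge t}$ from Theorem~\ref{thm:exporder}(i), so that $E_t X_{\wc{T \wedge t}} E_t = E_t E_{T \wedge t} X_{\wc{T \wedge t}} E_{T \wedge t} E_t = X_{\wc{T \wedge t}}$, never touching the defining sum; your termwise computation from (\ref{eqn:vacdisc}), using $E_{s_i} \le E_t$ for each support point $s_i \le t$, reaches the same conclusion more elementarily, at the cost of unpacking the sum --- both arguments ultimately rest on the same fact, that the support of $T \wedge t$ lies in $[ 0, t ]$. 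The more substantive difference is the identity-adapted half: the paper simply cites Coquio \cite[Properties~3.3(2)]{Coq06}, whereas you supply a self-contained argument --- each summand of $X_{\wh{T \wedge t}}$ lies in $\bop{\fock_{r)}}{} \otimes I_{[r} \subseteq \bop{\fock_{t)}}{} \otimes I_{[t}$ for $r = s_i \vee s_j \le t$, and $\wh{\pi}( \cdot )_t$ acts as the identity on $\bop{\fock_{t)}}{} \otimes I_{[t}$ --- which is a genuine addition, as is your separate treatment of the endpoint $t = 0$, where $\wh{\pi}( \cdot )_0$ is defined via the vacuum state and both sides reduce to $\expn_\Vac[ X_0 ] \, I$. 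One small attribution to repair: the monotonicity $E_s \le E_t$ is not really \emph{established} in the proof of Theorem~\ref{thm:expS} (the computation of $( E_t - E_s )^2$ there already presupposes $E_s E_t = E_t E_s = E_s$); it is cleaner to verify it directly on exponential vectors, via $E_t E_s \evec{f} = \evec{1_{[ 0, s )} f} = E_s E_t \evec{f}$, or to invoke Theorem~\ref{thm:exporder}(i) for deterministic times.
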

\begin{proof}
Since $T \wedge t$ is discrete and $T \wedge t \le t$, by
Theorem~\ref{thm:qstorder}, it follows from
the first part of Proposition~\ref{prp:procSad} and
Theorem~\ref{thm:exporder}(i) that
\[
E_t X_{\wc{T \wedge t}} E_t = %
E_t E_{T \wedge t} X_{\wc{T \wedge t}} E_{T \wedge t} E_t = %
E_{T \wedge t} X_{\wc{T \wedge t}} E_{T \wedge t} = %
X_{\wc{T \wedge t}}.
\]
The second claim is contained in \cite[Properties~3.3(2)]{Coq06}.
\end{proof}

\begin{remark}
The definitions of $X^{\wc{T}}$ and~$Z^{\wc{S}}$ in
Propositions~\ref{prp:exe} and~\ref{prp:vmg} are consistent: they
agree when when $S = T$ is discrete and $Z$ is regarded as a constant
process~$X$, by the penultimate remark in
Definition~\ref{def:discstop}.
\end{remark}

\begin{lemma}\label{lem:discmg}
If $M$ is a martingale and $T$ is a discrete quantum stopping time
with support $\{ t_1 < \cdots < t_n \}$ then
$M_{\wc{T}} = E_T M_t E_T$ for all $t \in [ t_n, \infty )$.
\end{lemma}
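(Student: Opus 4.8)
The plan is to reduce everything to a single pull-through identity for the martingale, after recording the explicit form of the time projection of a discrete stopping time. First I would note that, since $T$ is discrete with support $\{ t_1 < \cdots < t_n \} \subseteq [ 0, \infty )$, applying Theorem~\ref{thm:expS} to any partition $\pi$ fine enough to separate the support points gives $E_T^\pi = \sum_{i = 1}^n T_i E_{\pi( t_i )}$, where $\pi( t_i )$ is the right-hand endpoint of the partition interval containing $t_i$ and $T_i := T\bigl( \{ t_i \} \bigr)$; refining $\pi$ so that $\pi( t_i ) \downarrow t_i$ and using the strong continuity of $s \mapsto E_s$ yields
\[
E_T = \sum_{i = 1}^n T_i E_{t_i} = \sum_{i = 1}^n E_{t_i} T_i,
\]
the second equality because $T_i \in \bop{\fock_{t_i)}}{} \otimes I_{[t_i}$ and $E_{t_i} \in I_{t_i)} \otimes \bop{\fock_{[t_i}}{}$ act on complementary tensor factors and so commute. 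Expanding then gives $E_T M_t E_T = \sum_{i, j = 1}^n T_i E_{t_i} M_t E_{t_j} T_j$.

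The key step is to show that, whenever $s$, $u \in \R_+$ satisfy $s \vee u \le t$,
\[
E_s M_t E_u = E_s M_{s \vee u} E_u.
\]
I would treat the two cases separately, using throughout that $E_r \le E_{r'}$, and hence $E_r E_{r'} = E_{r'} E_r = E_r$, whenever $r \le r'$. If $s \le u$ then $E_s = E_s E_u$, so $E_s M_t E_u = E_s ( E_u M_t E_u ) = E_s M_u E_u$ by the martingale property $E_u M_t E_u = M_u E_u$, which is valid as $u \le t$. If instead $u \le s$ then $E_u = E_s E_u$, so $E_s M_t E_u = ( E_s M_t E_s ) E_u = M_s E_s E_u = M_s E_u$; now adaptedness and $E_u = E_s E_u$ give $M_s E_u = M_s E_s E_u = E_s M_s E_u$, whence $E_s M_t E_u = E_s M_s E_u$. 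In both cases the right-hand side is $E_s M_{s \vee u} E_u$, as required.

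Applying this with $s = t_i$, $u = t_j$ and any $t \ge t_n \ge t_i \vee t_j$ gives $E_{t_i} M_t E_{t_j} = E_{t_i} M_{t_i \vee t_j} E_{t_j}$ for every $i$, $j$, so that
\[
E_T M_t E_T = \sum_{i, j = 1}^n T_i E_{t_i} M_{t_i \vee t_j} E_{t_j} T_j = M_{\wc{T}}
\]
by Definition~\ref{def:discstop}.

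The main obstacle I anticipate is the asymmetric case $u < s$ in the pull-through identity: there the martingale property alone only produces $M_s E_u$, and one must additionally invoke adaptedness of $M$ to reinstate the projection $E_s$ on the left and so match the symmetric form $E_s M_{s \vee u} E_u$ appearing in the definition of $M_{\wc{T}}$. A secondary point requiring a little care is the justification of the closed form $E_T = \sum_{i = 1}^n T_i E_{t_i}$, in particular checking that the argument is unaffected by a support point at $0$ (where $E_0$ projects onto $\C \evec{0}$ and $T\bigl( \{ 0 \} \bigr) \in \C I$) and observing that, since $t_i \vee t_j \le t_n < \infty$, no value $M_\infty$ is ever needed; the case $t_n = \infty$ renders the asserted range of $t$ empty and so is vacuous.
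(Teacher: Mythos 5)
Your proof is correct and follows essentially the same route as the paper: expand $E_T M_t E_T$ using $E_T = \sum_i T_i E_{t_i}$ and reduce $E_{t_i} M_t E_{t_j}$ to $E_{t_i} M_{t_i \vee t_j} E_{t_j}$ via the martingale property. The only cosmetic difference is that the paper handles your two cases at once by inserting the projection $E_{t_i \vee t_j}$ on both sides of $M_t$ (so that $E_{t_i} E_{t_i \vee t_j} = E_{t_i}$ and $E_{t_i \vee t_j} E_{t_j} = E_{t_j}$ absorb everything), which avoids both the case split and the separate appeal to adaptedness.
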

\begin{proof}
With the notation of Definition~\ref{def:discstop}, if
$t \in [ t_n, \infty )$ then
\[
M_{\wc{T}} = \sum_{i, j = 1}^n %
T_i E_{t_i} E_{t_i \vee t_j} M_t E_{t_i \vee t_j} E_{t_j} T_j = %
\sum_{i, j = 1}^n T_i E_{t_i} M_t E_{t_j} T_j = E_T M_t E_T.
\qedhere
\]
\end{proof}

\begin{theorem}\label{thm:mgchar}
Let $X$ be an adapted process. Then $X$ is a martingale if and only
if~$\expn_\Vac\bigl[ X_{\wc{T}} \bigr] = \expn_\Vac\bigl[ X_0 \bigr]$
for every discrete quantum stopping time $T$, where~$\expn_\Vac$ is
the vacuum state \tu{(\ref{eqn:vacuumstate})}.
\end{theorem}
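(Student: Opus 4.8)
The statement is an ``if and only if'', so I would prove the two implications separately. The forward direction (martingale $\Rightarrow$ the optional-sampling-type identity for discrete stopping times) should be the routine half, while the converse (the identity for all discrete $T$ $\Rightarrow$ martingale) is where the real content lies.

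For the forward direction, suppose $X$ is a martingale and let $T$ be a discrete quantum stopping time with support $\{ t_1 < \cdots < t_n \}$. I would expand $X_{\wc{T}}$ using the definition \tu{(\ref{eqn:vacdisc})} and apply the vacuum state $\expn_\Vac[ \, \cdot \, ] = \langle \evec{0}, \cdot \, \evec{0} \rangle$. The key observation is that $E_0 \le E_{t_i}$ for every $i$, so that $E_0 E_{t_i} = E_0$ and $E_{t_j} E_0 = E_0$; thus, when sandwiching the double sum between vacuum vectors, every term collapses using $\langle \evec{0}, E_{t_i} T_i X_{t_i \vee t_j} T_j E_{t_j} \evec{0} \rangle$. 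Since $S\bigl( \{ 0 \} \bigr) \in \C I$ and the $T_i$ sum to $I$ on the support, the martingale property $E_s X_t E_s = X_s E_s$ together with $E_0 T_i = T_i E_0$ (adaptedness of $S$) should telescope the whole expression down to $\langle \evec{0}, X_0 \evec{0} \rangle = \expn_\Vac[ X_0 ]$.

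For the converse, I would argue contrapositively or by isolating the martingale identity $E_s X_t E_s = X_s E_s$ for a fixed pair $s \le t$. The plan is to choose a cleverly tailored family of discrete stopping times that probes exactly this identity. Given $s \le t$, I would like to build discrete stopping times whose time projections produce $E_s$ and $E_t$ in the right positions; the two-point times with support contained in $\{ 0, s, t, \infty \}$, and their $\wedge$-combinations, are the natural candidates, using Theorem~\ref{thm:exporder}(iv), which gives $E_S E_s = E_{S \wedge s}$. The difficulty is that the vacuum state only sees the ``diagonal'' vacuum expectation $\langle \evec{0}, X_{\wc{T}} \evec{0} \rangle$, whereas the martingale property is an operator identity; I would need to recover off-diagonal and operator information from a scalar functional. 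The standard device is to exploit the freedom in $T$: varying $T$ over a sufficiently rich family of discrete stopping times, and possibly using the polarisation-type flexibility coming from the spectral-measure structure, should let the single scalar condition $\expn_\Vac[ X_{\wc{T}} ] = \expn_\Vac[ X_0 ]$ separate into the full operator identity once one tests against enough projections $S\bigl( ( s, \infty ] \bigr)$.

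The main obstacle I anticipate is precisely this passage from a scalar (vacuum-state) condition to the operator-valued martingale identity. The resolution should hinge on the richness of the available quantum stopping times: for any orthogonal projection $Q$ that is identity adapted at time $s$ (that is, $Q \in \bop{\fock_{s)}}{} \otimes I_{[s}$), one can manufacture a discrete stopping time interpolating between $s$ and $t$ whose stopped value $X_{\wc{T}}$ encodes $Q\bigl( E_s X_t E_s - X_s E_s \bigr)Q$ in its vacuum expectation. Showing that these projections are plentiful enough to force $E_s X_t E_s = X_s E_s$, via adaptedness of $X$ and the correspondence of Theorem~\ref{thm:exporder}, is the crux; once that operator identity is in hand for all $s \le t$, the definition of martingale is satisfied and the proof is complete.
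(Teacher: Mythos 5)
Your proposal is correct and essentially reproduces the paper's own proof: the converse is exactly the paper's argument (a two-point stopping time with support $\{ s < t \}$ and an arbitrary adapted orthogonal projection $Q = T\bigl( \{ s \} \bigr) \in \bop{\fock_{s)}}{} \otimes I_{[s}$, comparison of $\expn_\Vac\bigl[ X_{\wc{T}} \bigr]$ with $\expn_\Vac\bigl[ X_{\wc{t}} \bigr]$ to get $\langle Q \evec{0}, ( X_t - X_s ) Q \evec{0} \rangle = 0$, and the abundance of such projections, via your polarisation step, forcing $E_s ( X_t - X_s ) E_s = 0$), while your forward half amounts to the computation behind the paper's Lemma~\ref{lem:discmg}. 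The one slip is the claimed commutation $E_0 T_i = T_i E_0$, which is false in general (identity-adaptedness gives $T_i E_{t_i} = E_{t_i} T_i$, not commutation with the rank-one projection $E_0$), but it is also unnecessary: since $\sum_i T_i = I$ and $E_{t_i} \evec{0} = \evec{0}$, one has $E_T \evec{0} = \sum_i T_i E_{t_i} \evec{0} = \evec{0}$, whence $\expn_\Vac\bigl[ X_{\wc{T}} \bigr] = \langle \evec{0}, E_T X_{t_n} E_T \evec{0} \rangle = \langle \evec{0}, E_0 X_{t_n} E_0 \evec{0} \rangle = \expn_\Vac[ X_0 ]$ by the martingale property at time zero.
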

\begin{proof}
We follow the proof of \cite[Proposition~3.10]{Coq06}. If $X$ is a
martingale and the discrete quantum stopping time $T$ has support
$\{ t_1 < \cdots < t_n \}$ then Lemma~\ref{lem:discmg} implies
that
\[
\expn_\Vac\bigl[ X_{\wc{T}} \bigr] = %
\langle \evec{0}, E_0 E_T X_{t_n} E_T E_0 \evec{0} \rangle = %
\langle \evec{0}, E_0 X_{t_n} E_0 \evec{0} \rangle = %
\expn_\Vac[ X_0 ];
\]
note that $T \wedge 0 = 0$ and $E_0 E_T = E_0 = E_T E_0$, by
Theorem~\ref{thm:exporder}(ii).

Conversely, let $T$ have support $\{ s < t \} \subseteq \R_+$, let
$T\bigl( \{ s \} \bigr) = P$ and note that
\begin{align*}
\expn_\Vac\bigl[ X_{\wc{T}} \bigr] & = %
\expn_\Vac[ P X_s P ] + %
\expn_\Vac\bigl[ P X_t P^\perp \bigr] + %
\expn_\Vac\bigl[ P^\perp X_t P \bigr] + %
\expn_\Vac\bigl[ P^\perp X_t P^\perp \bigr] \\[1ex]
 & = \expn_\Vac[ P ( X_s - X_t )P + X_t ],
\end{align*}
so if
$\expn_\Vac\bigl[ X_{\wc{T}} \bigr] = \expn_\Vac[ X_0 ] = %
\expn_\Vac\bigl[ X_{\wc{t}} \bigr] = \expn_\Vac[ X_t ]$
then
\[
\langle P \evec{0}, ( X_t - X_s ) P \evec{0} \rangle = 0
\]
for any orthogonal projection
$P \in \bop{\fock_{s)}}{} \otimes I_{[s}$, so for any
$P \in \bop{\fock_{s)}}{} \otimes I_{[s}$. It follows that
$E_s ( X_t - X_s ) E_s = 0$, as required.
\end{proof}

\begin{remark}
Since
$\expn_\Vac\bigl[ X_{\wc{T}} \bigr] = %
\expn_\Vac\bigl[ X_{\wh{T}} \bigr]$
for any process $X$ and any discrete quantum stopping time $T$, the
identity-adapted version of Theorem~\ref{thm:mgchar} holds: Coquio's
work contains a similar result \cite[Proposition~3.10]{Coq06}.
\end{remark}

The following result expresses the relationship between the vacuum and
identity-adapted martingales closed by the same operator;
\cf\cite[Lemma~3.6]{Coq06}. It is provides the key to stopping
processes for more general times.

\begin{lemma}\label{lem:Mint}
If $X$ is a process and $t \in [ 0, \infty ]$ then
\[
\wh{\pi}( X )_t = \wc{\pi}( X )_t + \int_t^\infty %
I_\mul \otimes \wh{\pi}( \wc{\pi}( X )_t )_s \std \Lambda_s = %
\wc{\pi}( X )_t + \int_t^\infty I_\mul \otimes %
\wc{\pi}( \wh{\pi}( X )_t )_s \std \Lambda_s.
\]
\end{lemma}
\begin{proof}
Applying Theorem~\ref{thm:switch} to the first identity in
(\ref{eqn:Eid}), it follows that
\[
I - E_t = \wh{\pi}( I - E_t )_\infty = %
\int_t^\infty I_\mul \otimes %
( I - \wh{\pi}( I - E_t )_s ) \std \Lambda_s = %
\int_t^\infty I_\mul \otimes \wh{\pi}( E_t )_s \std \Lambda_s.
\]
Hence, by Lemma~\ref{lem:qsi},
\[
\wh{\pi}( X )_t = \wh{\pi}( X )_t \Bigl( E_t + %
\int_t^\infty I_\mul \otimes \wh{\pi}( E_t )_s \std \Lambda_s \Bigr) %
= \wc{\pi}( X )_t + \int_t^\infty I_\mul \otimes %
\wh{\pi}( \wc{\pi}( X )_t )_s \std \Lambda_s.
\]
Similar working, but using the first identity in (\ref{eqn:Eid})
directly, gives the second claim.
\end{proof}

\begin{remark}
It does not follow from Lemma~\ref{lem:Mint} that
$\wh{\pi}( \wc{\pi}( X )_t )_s = \wc{\pi}( \wh{\pi}( X )_t )_s$ for
almost all~$s \ge t$. This is because the integrands are adapted in
difference senses, so their difference is neither identity adapted nor
vacuum adapted, in general; thus Remark~\ref{rem:independence} and
Lemma~\ref{lem:qsiestimate} cannot be applied. The same situation
occurs repeatedly below; for example, see
Theorem~\ref{thm:idclosedmg}.
\end{remark}

The following theorem extends Lemma~\ref{lem:Mint} from deterministic
to discrete quantum stopping times. If it is known how to stop a class
of processes for one form of adaptedness, this result provides
stopping of that class for the other form. The identity-adapted
version for closed martingales was obtained by Coquio
\cite[Proof of Theorem~3.5]{Coq06}.

\begin{theorem}\label{thm:idclosedmg}
If $X$ is a process and $T$ is a discrete quantum stopping time then
\begin{align}
X_{\wh{T}} & = X_{\wc{T}} + \int_0^\infty I_\mul \otimes %
T\bigl( [ 0, s ] \bigr) \wh{\pi}\bigl( X_{\wc{T}} \bigr)_s %
T\bigl( [ 0, s ] \bigr) \std \Lambda_s \label{eqn:idclosedmg} \\[1ex]
 & = X_{\wc{T}} + \int_0^\infty I_\mul \otimes %
T\bigl( [ 0, s ] \bigr) \wc{\pi}( X_{\wh{T}} )_s %
T\bigl( [ 0, s ] \bigr) \std \Lambda_s.
\label{eqn:vacclosedmg}
\end{align}
\end{theorem}
\begin{proof}
If $T$ has support $\{ t_1 < \cdots < t_n \}$ and
$T_i = T\bigl( \{ t_i \} \bigr)$ for $i = 1$, \ldots, $n$ then, by
Lemmas~\ref{lem:Mint} and~\ref{lem:qsi},
\begin{align*}
X_{\wh{T}} & = %
\sum_{i, j = 1}^n T_i \wh{\pi}( E_{t_i} )_{t_i \vee t_j}
\wh{\pi}( X )_{t_i \vee t_j} %
\wh{\pi}( E_{t_j} )_{t_i \vee t_j} T_j \\[1ex]
 & = \sum_{i, j = 1}^n \Bigl( T_i \wh{\pi}( E_{t_i} )_{t_i \vee t_j} %
\wc{\pi}( X )_{t_i \vee t_j} \wh{\pi}( E_{t_j} )_{t_i \vee t_j} T_j \\
 & \hspace{6em} + \int_{t_i \vee t_j}^\infty I_\mul \otimes T_i %
\wh{\pi}( E_{t_i} )_{t_i \vee t_j} %
\wh{\pi}( \wc{\pi}( X )_{t_i \vee t_j} )_s %
\wh{\pi}( E_{t_j} )_{t_i \vee t_j} T_j \std \Lambda_s \Bigr) \\[1ex]
 & = %
\sum_{i, j = 1}^n \Bigl( T_i E_{t_i} X_{t_i \vee t_j} E_{t_j} T_j \\
 & \hspace{2em} + \int_0^\infty I_\mul \otimes %
1_{[ t_i, \infty )}( s ) 1_{[ t_j, \infty )}( s ) T_i %
\wh{\pi}( E_{t_i} )_s \wh{\pi}( X_{t_i \vee t_j} )_s %
\wh{\pi}( E_{t_j} )_s T_j \std \Lambda_s \Bigr) \\[1ex]
 & = X_{\wc{T}} + \int_0^\infty I_\mul \otimes %
T\bigl( [ 0, s ] \bigr) \wh{\pi}( X_{\wc{T}} )_s %
T\bigl( [ 0, s ] \bigr) \std \Lambda_s;
\end{align*}
the penultimate equality holds because
$\wh{\pi}( E_r )_s \wh{\pi}( E_s )_t = \wh{\pi}( E_r )_t$ if
$r \le s \le t$, and the final equality holds because
\begin{align*}
T\bigl( [ 0, s ] \bigr) & \wh{\pi}( X_{\wc{T}} )_s %
T\bigl( [ 0, s ] \bigr) \\[1ex]
 & = \sum_{i, j, k, l = 1}^n %
1_{[ t_i, \infty )}( s ) 1_{[ t_j, \infty )}( s ) T_i %
\wh{\pi}( T_k E_{t_k} X_{t_k \vee t_l} E_{t_l} T_l )_s T_j \\[1ex]
 & = \sum_{i, j, k, l = 1}^n %
1_{[ t_i, \infty )}( s ) 1_{[ t_j, \infty )}( s ) %
\wh{\pi}( T_i T_k )_s \wh{\pi}( E_{t_k} X_{t_k \vee t_l} E_{t_l} )_s %
\wh{\pi}( T_l T_j )_s \\[1ex]
 & = \sum_{i, j = 1}^n %
1_{[ t_i, \infty )}( s ) 1_{[ t_j, \infty )}( s ) %
T_i \wh{\pi}( E_{t_i} )_s \wh{\pi}( X_{t_i \vee t_j} )_s %
\wh{\pi}( E_{t_j} )_s T_j.
\end{align*}
Similarly, using the second identity in Lemma~\ref{lem:Mint},
\begin{align*}
X_{\wh{T}} - X_{\wc{T}} & = \sum_{i, j = 1}^n %
\int_{t_i \vee t_j}^\infty I_\mul \otimes %
T_i \wh{\pi}( E_{t_i} )_{t_i \vee t_j} %
\wc{\pi}( \wh{\pi}( X )_{t_i \vee t_j} )_s %
\wh{\pi}( E_{t_j} )_{t_i \vee t_j} T_j \std \Lambda_s \\[1ex]
 & = \sum_{i, j = 1}^n \int_0^\infty I_\mul \otimes %
1_{[ t_i \vee t_j, \infty )}( s ) E_s T_i %
\wh{\pi}( E_{t_i} X_{t_i \vee t_j} E_{t_j} )_{t_i \vee t_j} %
T_j E_s \std \Lambda_s \\[1ex]
 & = \int_0^\infty I_\mul \otimes T\bigl( [ 0, s ] \bigr) %
\wc{\pi}( X_{\wh{T}} )_s T\bigl( [ 0, s ] \bigr) \std \Lambda_s,
\end{align*}
since if $s \ge t_i \vee t_j$ then
\begin{align*}
T_i \wc{\pi}( X_{\wh{T}} )_s T_j & = %
\sum_{k, l = 1}^n E_s T_i T_k %
\wh{\pi}( E_{t_k} X_{t_k \vee t_l} E_{t_l} )_{t_k \vee t_l} %
T_l T_j E_s \\[1ex]
 & = E_s T_i \wh{\pi}( E_{t_i} X_{t_i \vee t_j} E_{t_j} ) T_j E_s.
\qedhere
\end{align*}
\end{proof}

\begin{remark}
The integrals on the right-hand sides of (\ref{eqn:idclosedmg}) and
(\ref{eqn:vacclosedmg}) may be viewed as artifacts produced by working
with identity-adapted processes rather than vacuum-adapted ones.
\end{remark}

\section{Stopping martingales at general times}\label{sec:closedmgs}

\begin{definition}
If $M = ( M_t )_{t \in \R_+}$ is a martingale closed by~$M_\infty$ and
$T$ is a discrete quantum stopping time with support
$\{ t_1 < \cdots < t_n \}$ then Lemma~\ref{lem:discmg} implies that
\[
M_{\wc{T}} = E_T M_{t_n} E_T = E_T M_{t_n} E_{t_n} E_T = %
E_T E_{t_n} M_\infty E_{t_n} E_T = E_T M_\infty E_T.
\]
Hence the result of applying vacuum-adapted stopping to a martingale
$M$ closed by $M_\infty$ at a quantum stopping time~$S$ is defined to
be
\[
M_{\wc{S}} := E_S M_\infty E_S.
\]
Note that $E = ( E_t )_{t \in \R_+}$ is a vacuum-adapted martingale
closed by $E_\infty = I$ and $I_{\wc{S}} = E_S$ for any quantum
stopping time $S$. Furthermore, the map~$S \mapsto M_{\wc{S}}$ is
continuous if the collection of quantum stopping times is equipped
with the topology described in Remark~\ref{rem:qsconv} and
$\bop{\fock}{}$ has the strong operator topology.
\end{definition}

\begin{theorem}
Let $M$ be a closed martingale and let $S$ be a quantum stopping time.
\begin{mylist}
\item[\tu{(i)}] $M_{\wc{S}} = E_S M_{\wc{S}} E_S$.
\item[\tu{(ii)}] $\wc{\pi}( M_{\wc{S \wedge t}} )_t = M_{\wc{S \wedge t}}$
for all $t \in [ 0, \infty ]$.
\item[\tu{(iii)}] The process
$M^{\wc{S}} := \bigl( M_{\wc{S \wedge t}} \bigr)_{t \in \R_+}$ is a
vacuum-adapted martingale closed by $M_{\wc{S}}$ and such that
$M^{\wc{S}}_t = E_{S \wedge t} M_{\wc{S}} E_{S \wedge t}$ for all
$t \in \R_+$.
\item[\tu{(iv)}]
$S\bigl( [ 0, t ] \bigr) M_{\wc{S}} S\bigl( [ 0, t ] \bigr) = %
S\bigl( [ 0, t ] \bigr) M_{\wc{S \wedge t}} S\bigl( [ 0, t ] \bigr)$
for all $t \in [ 0, \infty ]$.
\end{mylist}
\end{theorem}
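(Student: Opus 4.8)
The plan is to establish the four claims in order; each follows from results already proved, and only (iv) requires noticeable bookkeeping.

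For (i), since $E_S$ is an orthogonal projection we have $E_S^2 = E_S$, so $E_S M_{\wc{S}} E_S = E_S E_S M_\infty E_S E_S = E_S M_\infty E_S = M_{\wc{S}}$. For (ii), the crucial input is Theorem~\ref{thm:exporder}(iv), which gives $E_{S \wedge t} = E_S E_t = E_t E_S$; in particular $E_t E_{S \wedge t} = E_{S \wedge t} = E_{S \wedge t} E_t$. Since $M_{\wc{S \wedge t}} = E_{S \wedge t} M_\infty E_{S \wedge t}$ by definition, pre- and post-multiplication by $E_t$ leaves it unchanged, which is the claim.

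For (iii), I would apply Proposition~\ref{prp:vmg} to the operator $Z = M_\infty$. That proposition states directly that $\bigl( (M_\infty)_{\wc{S \wedge t}} \bigr)_{t \in \R_+}$ is a vacuum-adapted martingale closed by $(M_\infty)_{\wc{S}}$; since $(M_\infty)_{\wc{S \wedge t}} = E_{S \wedge t} M_\infty E_{S \wedge t} = M_{\wc{S \wedge t}}$ and $(M_\infty)_{\wc{S}} = E_S M_\infty E_S = M_{\wc{S}}$, this is precisely the assertion that $M^{\wc{S}}$ is a vacuum-adapted martingale closed by $M_{\wc{S}}$. For the supplementary formula, I would use $S \wedge t \le S$, so that Theorem~\ref{thm:exporder}(i) gives $E_{S \wedge t} E_S = E_{S \wedge t} = E_S E_{S \wedge t}$; substituting yields $E_{S \wedge t} M_{\wc{S}} E_{S \wedge t} = E_{S \wedge t} E_S M_\infty E_S E_{S \wedge t} = E_{S \wedge t} M_\infty E_{S \wedge t} = M_{\wc{S \wedge t}} = M^{\wc{S}}_t$.

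The only step with any content is (iv). Here I would begin from Proposition~\ref{prp:Sdecomp}, namely $E_{S \wedge t} = S\bigl( [ 0, t ] \bigr) E_S + S\bigl( ( t, \infty ] \bigr) E_t$, together with its adjoint $E_{S \wedge t} = E_S S\bigl( [ 0, t ] \bigr) + E_t S\bigl( ( t, \infty ] \bigr)$, valid since $E_{S \wedge t}$ is self-adjoint and $E_t$ commutes with the spectral projections $S\bigl( [ 0, t ] \bigr), S\bigl( ( t, \infty ] \bigr) \in \bop{\fock_{t)}}{} \otimes I_{[t}$. Because $S\bigl( [ 0, t ] \bigr)$ and $S\bigl( ( t, \infty ] \bigr)$ are the spectral projections of disjoint sets, their products in either order vanish by Proposition~\ref{prp:qst}(iii); hence $S\bigl( [ 0, t ] \bigr) E_{S \wedge t} = S\bigl( [ 0, t ] \bigr) E_S$ and $E_{S \wedge t} S\bigl( [ 0, t ] \bigr) = E_S S\bigl( [ 0, t ] \bigr)$. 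Inserting these into $M_{\wc{S \wedge t}} = E_{S \wedge t} M_\infty E_{S \wedge t}$ gives $S\bigl( [ 0, t ] \bigr) M_{\wc{S \wedge t}} S\bigl( [ 0, t ] \bigr) = S\bigl( [ 0, t ] \bigr) E_S M_\infty E_S S\bigl( [ 0, t ] \bigr) = S\bigl( [ 0, t ] \bigr) M_{\wc{S}} S\bigl( [ 0, t ] \bigr)$, as required. The main obstacle is precisely this bookkeeping: one must apply Proposition~\ref{prp:Sdecomp} on one side and its adjoint on the other and keep track of which products of the complementary spectral projections annihilate each other. Everything else is a direct consequence of the idempotency of the time projections and the lattice and ordering identities collected in Theorem~\ref{thm:exporder}.
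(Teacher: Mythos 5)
Your proof is correct and follows essentially the same route as the paper: idempotency of $E_S$ for (i), the commutation/ordering identities of Theorems~\ref{thm:qstorder} and~\ref{thm:exporder} for (ii) and (iii), and Proposition~\ref{prp:Sdecomp} combined with the vanishing of $S\bigl( [ 0, t ] \bigr) S\bigl( ( t, \infty ] \bigr)$ for (iv), where you merely spell out the algebra the paper leaves implicit. The only cosmetic difference is that in (iii) you invoke Proposition~\ref{prp:vmg} with $Z = M_\infty$ rather than re-deriving the closed-martingale property via Proposition~\ref{prp:closedvacmg}, but that proposition encapsulates exactly the computation the paper performs, so the content is identical.
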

\begin{proof}
The first claim is immediate, because $E_S$ is idempotent.
Furthermore, if~$t \in \R_+$ then
\[
E_t M_{\wc{S \wedge t}} E_t = %
E_t E_{S \wedge t} M_\infty E_{S \wedge t} E_t = %
E_{S \wedge t} M_\infty E_{S \wedge t} = M_{\wc{S \wedge t}},
\]
by Theorems~\ref{thm:exporder}(ii) and~\ref{thm:qstorder};
Theorem~\ref{thm:exporder}(ii) also implies that
\[
E_t M_{\wc{S}} E_t = E_t E_S M_\infty E_S E_t = %
E_{S \wedge t} M_\infty E_{S \wedge t} = M_{\wc{S \wedge t}} = %
M^{\wc{S}}_t,
\]
so $M^{\wc{S}}$ is a vacuum-adapted martingale closed by $M_{\wc{S}}$,
by Proposition~\ref{prp:closedvacmg}. If~$t \in [ 0, \infty ]$ then
\[
E_{S \wedge t} M_{\wc{S}} E_{S \wedge t} = %
E_{S \wedge t} E_S M_\infty E_S E_{S \wedge t} = %
E_{S \wedge t} M_\infty E_{S \wedge t} = M_{\wc{S \wedge t}},
\]
by Theorems~\ref{thm:qstorder} and~\ref{thm:exporder}(i), so (iii)
holds. Finally, Proposition~\ref{prp:Sdecomp} implies that
\[
S\bigl( [ 0, t ] \bigr) E_S M_\infty E_S S\bigl( [ 0, t ] \bigr) = %
S\bigl( [ 0, t ] \bigr) E_{S \wedge t} M_\infty E_{S \wedge t} %
S\bigl( [ 0, t ] \bigr).
\]
\end{proof}

\begin{proposition}
If $M$ is a martingale closed by $M_\infty$ then
$\expn_\Vac\bigl[ M_{\wc{T}} \bigr] = \expn_\Vac[ M_0 ]$ for any
quantum stopping time $T$, where $\expn_\Vac$ is the vacuum state
\tu{(\ref{eqn:vacuumstate})}.
\end{proposition}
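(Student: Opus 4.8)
The plan is to unwind the definition $M_{\wc{T}} = E_T M_\infty E_T$ and to exploit the fact that every quantum stopping time dominates the deterministic time~$0$, so that its time projection fixes the vacuum vector. The whole argument is essentially formal once this observation is in place.

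First I would record that $0 \le T$ for every quantum stopping time~$T$: indeed $0\bigl( [ 0, t ] \bigr) = I \ge T\bigl( [ 0, t ] \bigr)$ for all $t \ge 0$, since $0 \in [ 0, t ]$. Theorem~\ref{thm:exporder}(i) then gives $E_0 \le E_T$, equivalently $E_T E_0 = E_0 = E_0 E_T$. As $E_0$ is the orthogonal projection onto $\C \evec{0}$, so that $E_0 \evec{0} = \evec{0}$, this yields
\[
E_T \evec{0} = E_T E_0 \evec{0} = E_0 \evec{0} = \evec{0}.
\]
With this in hand the vacuum expectation collapses immediately:
\[
\expn_\Vac\bigl[ M_{\wc{T}} \bigr] =
\langle \evec{0}, E_T M_\infty E_T \evec{0} \rangle =
\langle E_T \evec{0}, M_\infty E_T \evec{0} \rangle =
\langle \evec{0}, M_\infty \evec{0} \rangle .
\]

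It then remains only to replace $M_\infty$ by $M_0$ inside the vacuum state, and this is where the martingale hypothesis is used. Applying the martingale identity $E_s M_t E_s = M_s E_s$ with $s = 0$ and letting $t \to \infty$, the closedness of $M$ (the strong-operator convergence $M_t \to M_\infty$) gives $E_0 M_\infty E_0 = M_0 E_0$. Since $E_0 \evec{0} = \evec{0}$, I can conclude
\[
\langle \evec{0}, M_\infty \evec{0} \rangle =
\langle \evec{0}, E_0 M_\infty E_0 \evec{0} \rangle =
\langle \evec{0}, M_0 E_0 \evec{0} \rangle =
\langle \evec{0}, M_0 \evec{0} \rangle = \expn_\Vac[ M_0 ],
\]
which finishes the proof.

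The only mildly delicate point — the ``main obstacle'', such as it is — is this last passage from $M_\infty$ to $M_0$: one must combine the martingale property at time $s = 0$ with the strong-operator convergence $M_t \to M_\infty$, noting that the vacuum state is strongly continuous on bounded sets, in order to justify $E_0 M_\infty E_0 = \stlim_{t \to \infty} E_0 M_t E_0 = M_0 E_0$. Everything else reduces to the identity $E_T \evec{0} = \evec{0}$ and is purely formal.
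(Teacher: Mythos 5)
Your proof is correct and follows essentially the same route as the paper: the paper's one-line argument inserts $E_0$ via $\evec{0} = E_0 \evec{0}$, uses $E_0 E_T = E_0 = E_T E_0$ (from $0 \le T$ and Theorem~\ref{thm:exporder}(i)) and then the closed-martingale property at $s = 0$, which is exactly what you do, only spelled out in more detail.
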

\begin{proof}
Note that
\[
\expn_\Vac\bigl[ M_{\wc{T}} \bigr] = %
\langle \evec{0}, E_T M_\infty E_T \evec{0} \rangle = %
\langle \evec{0}, E_0 E_T M_\infty E_T E_0 \evec{0} \rangle = %
\langle \evec{0}, M_0 \evec{0} \rangle = \expn_\Vac[ M_0 ].
\qedhere
\]
\end{proof}

\begin{theorem}[Optional Sampling]\label{thm:vacoptional}
Let $M$ be a closed martingale and let~$S$ and $T$ be quantum stopping
times with $S \le T$. Then
\[
( M_{\wc{S}} )_{\wc{T}} = M_{\wc{S}} = ( M_{\wc{T}} )_{\wc{S}}.
\]
\end{theorem}
\begin{proof}
Suppose $M$ is closed by $M_\infty$. Theorem~\ref{thm:exporder}(i)
implies that
\[
( M_{\wc{T}} )_{\wc{S}} = E_S E_T M_\infty E_T E_S = %
E_S M_\infty E_S = M_{\wc{S}}
\]
and
\[
( M_{\wc{S}} )_{\wc{T}} = E_T E_S M_\infty E_S E_T = %
E_S M_\infty E_S = M_{\wc{S}}.
\qedhere
\]
\end{proof}

Theorem~\ref{thm:idclosedmg} motivates the definition in the following
theorem, which was established by Coquio \cite[Theorem~3.5]{Coq06};
the proof given here is a shortening of hers.

\begin{theorem}\label{thm:idstopop}
Let $Z \in \bop{\fock}{}$ and let $S$ be a quantum stopping time. Then
\begin{equation}\label{eqn:idstopop}
Z_{\wh{S}} := Z_{\wc{S}} + \int_0^\infty I_\mul \otimes %
S\bigl( [ 0, s ] \bigr) \wh{\pi}\bigl( Z_{\wc{S}} \bigr)_s %
S\bigl( [ 0, s ] \bigr) \std \Lambda_s
\end{equation}
extends to an element of $\bop{\fock}{}$, denoted the same way,
with $\| Z_{\wh{S}} \| \le \| Z \|$.
\end{theorem}
\begin{proof}
Suppose first that $S$ is discrete and let $Y$ denote the integral on
the right-hand side of~(\ref{eqn:idstopop}). From
Proposition~\ref{prp:procSad} and the remark at the end of
Definition~\ref{def:discstop}, it follows that
\[
Z_{\wc{S}} = E_S Z_{\wh{S}} = E_S Z_{\wc{S}} + E_S Y,
\]
so $E_S Y = 0$ and
\[
\wh{\pi}( E_S )_t \wh{\pi}( Y )_t = %
\wh{\pi}( E_S )_t \int_0^t I_\mul \otimes %
S\bigl( [ 0, s ] \bigr) \wh{\pi}\bigl( Z_{\wc{S}} \bigr)_s %
S\bigl( [ 0, s ] \bigr) \std \Lambda_s = 0
\]
for all $t \in [ 0, \infty ]$. Thus if $\theta \in \evecs$ then, as
$S\bigl( [ 0, s ] \bigr)$ commutes with $\wh{\pi}( E_S )_s$ for all
$s \in [ 0, \infty ]$, the weak form of the quantum It\^o product
formula for gauge integrals, Theorem~\ref{thm:weakqipf}, implies that
\begin{align*}
\| Z_{\wh{S}} \theta \|^2 - \| Z_{\wc{S}} \theta \|^2 & = %
\| ( Z_{\wh{S}} - Z_{\wc{S}} ) \theta \|^2 \\[1ex]
 & = \int_0^\infty \| ( I_\mul \otimes S\bigl( [ 0, s ] \bigr) %
\wh{\pi}\bigl( Z_{\wc{S}} \bigr)_s %
S\bigl( [ 0, s ] \bigr) ) \nabla_s \theta \|^2 \std s \\[1ex]
 & \le \| Z \|^2 \int_0^\infty \| ( I_\mul \otimes %
S\bigl( [ 0, s ] \bigr) \wh{\pi}\bigl( E_S \bigr)_s %
S\bigl( [ 0, s ] \bigr) ) \nabla_s \theta \|^2 \std s \\[1ex]
 & = \| Z \|^2 \bigl( \| \theta \|^2 - \| E_S \theta \|^2 \bigr),
\end{align*}
where
\[
\nabla : \evecs \to L^2( \R_+; \mul \otimes \fock ); \ %
\big( \nabla \evec{f} \bigr)( t ) = \nabla_t \evec{f} := %
f( t ) \evec{f}
\]
is the linear gradient operator; the final identity follows from the
first line by taking $Z = I$. Hence
\[
\| Z_{\wh{S}} \theta \|^2 %
\le \| Z \|^2 \bigl( \| E_S \theta \|^2 + \| \theta \|^2 - %
\| E_S \theta \|^2 ) = \| Z \|^2 \| \theta \|^2,
\]
and $Z_{\wh{S}}$ extends as claimed.

For a general quantum stopping time $S$, let $S_n$ be a sequence of
discrete quantum stopping times such that $S_n \qsto S$. Then
$Z_{\wc{S_n}} \to Z_{\wc{S}}$ in the strong operator topology and
\begin{multline*}
\int_0^\infty I_\mul \otimes %
S_n\bigl( [ 0, s ] \bigr) \wh{\pi}\bigl( Z_{\wc{S_n}} \bigr)_s %
S_n\bigl( [ 0, s ] \bigr) \std \Lambda_s \\[1ex]
\to \int_0^\infty I_\mul \otimes %
S\bigl( [ 0, s ] \bigr) \wh{\pi}\bigl( Z_{\wc{S}} \bigr)_s %
S\bigl( [ 0, s ] \bigr) \std \Lambda_s
\end{multline*}
in the strong operator topology on $\evecs$, by
Lemma~\ref{lem:qsiestimate}. Hence
$\| Z_{\wc{S}} \theta \| \le \| Z \| \, \| \theta \|$ for all
$\theta \in \evecs$ and the result follows.
\end{proof}

\begin{remark}
It is readily verified that $( Z, \, S ) \mapsto Z_{\wh{S}}$ is
jointly continuous on the product of any bounded subset of
$\bop{\fock}{}$ with the collection of all quantum stopping times,
when $\bop{\fock}{}$ is equipped with the strong operator topology and
the collection of all quantum stopping times is given the topology of
Remark~\ref{rem:qsconv}.

Consequently,
\[
Z_{\wh{S}} = %
Z_{\wc{S}} + \int_0^\infty I_\mul \otimes %
S\bigl( [ 0, s ] \bigr) \wc{\pi}( Z_{\wh{S}} )_s %
S\bigl( [ 0, s ] \bigr) \std \Lambda_s
\]
for any $Z \in \bop{\fock}{}$ and any quantum stopping time $S$, since
this identity agrees with (\ref{eqn:vacclosedmg}) when~$S$ is discrete
and extends to the general case by approximation.

Furthermore, as $I_{\wh{T}} = I$ for any discrete quantum stopping
time $T$, so
\begin{multline}
I = I_{\wh{S}} = E_S + \int_0^\infty I_\mul \otimes %
S\bigl( [ 0, s ] \bigr) \wh{\pi}( E_S )_s \std \Lambda_s \\[-2ex]
\iff E_S = I - \int_0^\infty I_\mul \otimes %
S\bigl( [ 0, s ] \bigr) \wh{\pi}( E_S )_s \std \Lambda_s
\label{eqn:tpiint}
\end{multline}
for any quantum stopping time $S$. This identity, which is believed to
be novel, expresses the time projection~$E_S$ using an
identity-adapted gauge integral; it should be compared with the first
identity in (\ref{eqn:tpint}).
\end{remark}

The following result \cite[Proposition~3.11]{Coq06} is the
identity-adapted counterpart of Theorem~\ref{thm:qstint}.

\begin{proposition}\label{prp:idtpint}
Let $S$ be a quantum stopping time. If $t \in [ 0, \infty ]$ then
\[
\wh{\pi}( E_S )_t = I - \int_0^t %
I_\mul \otimes S\bigl( [ 0, s ] \bigr) \wh{\pi}( E_S )_s %
\std \Lambda_s = %
E_S + \int_t^\infty %
I_\mul \otimes S\bigl( [ 0, s ] \bigr) \wh{\pi}( E_S )_s %
\std \Lambda_s.
\]
\end{proposition}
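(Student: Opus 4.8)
The plan is to identify both sides of the first identity as the unique identity-adapted martingale closed by $E_S$ and to invoke Proposition~\ref{prp:closedidmg}. By that proposition, $\wh{\pi}(E_S) = (\wh{\pi}(E_S)_t)_{t \in \R_+}$ is exactly the identity-adapted martingale closed by $E_S$, so it suffices to show that the family $J = (J_t)_{t \in \R_+}$ given by
\[
J_t := I - \int_0^t I_\mul \otimes S([0,s]) \wh{\pi}(E_S)_s \std \Lambda_s
\]
is an identity-adapted martingale closed by $E_S$ as well. The first identity in the statement then follows at once, and the second is obtained by subtracting the first (at a general $t$) from (\ref{eqn:tpiint}).

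To see that the integrand is admissible, take $Z = I$ in Theorem~\ref{thm:idstopop}; since $I_{\wc{S}} = E_S$, that theorem shows $(S([0,s]) \wh{\pi}(E_S)_s S([0,s]))_{s \in \R_+}$ to be an identity-adapted $\C$-$\C$ process of uniformly bounded norm. As $S([0,s])$ is a projection commuting with $\wh{\pi}(E_S)_s$, this process coincides with $(S([0,s]) \wh{\pi}(E_S)_s)_{s \in \R_+}$, so the gauge integral defining $J_t$ exists for each $t$. The quantum stochastic integration results of the appendix then give that $t \mapsto \int_0^t I_\mul \otimes S([0,s]) \wh{\pi}(E_S)_s \std \Lambda_s$ is an identity-adapted martingale, closed by its strong limit as $t \to \infty$; adding the constant identity-adapted martingale $I = \wh{\pi}(I)$ shows that $J$ is an identity-adapted martingale. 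Moreover (\ref{eqn:tpiint}) identifies the closing operator, since $\stlim_{t \to \infty} J_t = I - \int_0^\infty I_\mul \otimes S([0,s]) \wh{\pi}(E_S)_s \std \Lambda_s = E_S$.

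With $J$ and $\wh{\pi}(E_S)$ both identity-adapted martingales closed by $E_S$, Proposition~\ref{prp:closedidmg} forces $J_t = \wh{\pi}(E_S)_t$ for every $t \ge 0$, which is the first identity; subtracting it from (\ref{eqn:tpiint}) then yields the second. The main obstacle is the appendix step: one must check that truncating the gauge integral at $t$ yields an identity-adapted operator and that these truncations form a martingale closed by the full integral, \ie that the forward integral genuinely reconstructs the closed martingale from its integrand. This is a structural property of the gauge integral rather than a formal manipulation, and it is the only point where something beyond the already-established identity (\ref{eqn:tpiint}) and Proposition~\ref{prp:closedidmg} is required.
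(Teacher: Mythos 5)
Your proposal is correct and coincides with the paper's own primary argument: the paper likewise deduces the first identity from (\ref{eqn:tpiint}) together with the fact that the gauge integral of a bounded identity-adapted integrand is an identity-adapted martingale (so that, being closed by $E_S$, it must equal $\wh{\pi}( E_S )$ by the uniqueness expressed in Proposition~\ref{prp:closedidmg}), and it obtains the second identity by combining the first with its $t = \infty$ case, exactly as you do. The only difference is that the paper also records an alternative derivation---applying Theorem~\ref{thm:switch} to (\ref{eqn:tpint}) and simplifying the resulting integrand via (\ref{eqn:Safters})---which sidesteps the structural property of identity-adapted gauge integrals (boundedness and the martingale property of the truncations) that you rightly identify as the one point needing justification beyond (\ref{eqn:tpiint}).
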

\begin{proof}
The first identity follows from (\ref{eqn:tpiint}) and the fact that
the gauge integral of an identity-adapted process is an
identity-adapted martingale. Alternatively, applying
Theorem~\ref{thm:switch} to~(\ref{eqn:tpint}) gives that
\[
\wh{\pi}( E_S )_t - I = \wh{\pi}( E_S - I )_t = %
-\int_0^t I_\mul \otimes ( S\bigl( [ 0, s ] \bigr) + %
\wh{\pi}( E_S - I )_s ) \std \Lambda_s,
\]
so
\[
\wh{\pi}( E_S )_t = I - \int_0^t I_\mul \otimes \bigl( %
S\bigl( [ 0, s ] \bigr) \wh{\pi}( E_S )_s + %
S\bigl( ( s, \infty ] \bigr) %
( I - \wh{\pi}( E_S )_s ) \bigr) \std \Lambda_s.
\]
This gives the first claim, since Theorem~\ref{thm:exporder}(iv) and
Proposition~\ref{prp:Sdecomp} imply that
\begin{equation}\label{eqn:Safters}
S\bigl( ( s, \infty ] \bigr) \wh{\pi}( E_S )_s = %
S\bigl( ( s, \infty ] \bigr) \wh{\pi}( E_{S \wedge s} )_s = %
S\bigl( ( s, \infty ] \bigr) \wh{\pi}( E_s )_s = %
S\bigl( ( s, \infty ] \bigr).
\end{equation}
For the second, note that the first claim with $t = \infty$ yields the
identity
\[
E_S + \int_t^\infty I_\mul \otimes %
S\bigl( [ 0, s ] \bigr) \wh{\pi}( E_S )_s \std \Lambda_s = %
I - \int_0^t I_\mul \otimes %
S\bigl( [ 0, s ] \bigr) \wh{\pi}( E_S )_s \std \Lambda_s.
\qedhere
\]
\end{proof}

\begin{proposition}\label{prp:stopidem}
If $Z \in \bop{\fock}{}$ and $S$ is a quantum stopping time then
\[
( Z_{\wc{S}} )_{\wc{S}} = Z_{\wc{S}}, \qquad %
( Z_{\wh{S}} )_{\wc{S}} = Z_{\wc{S}}, \qquad %
( Z_{\wc{S}} )_{\wh{S}} = Z_{\wh{S}} \qquad \text{and} \qquad %
( Z_{\wh{S}} )_{\wh{S}} = Z_{\wh{S}}.
\]
Furthermore, $E_S Z_{\wh{S}} = Z_{\wc{S}}$.
\end{proposition}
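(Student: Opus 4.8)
The plan is to prove all five identities first for discrete quantum stopping times, where they reduce to results already established, and then to extend to an arbitrary quantum stopping time $S$ by approximation; throughout, $Z$ is regarded as a constant process whenever it is stopped. The first identity is immediate for \emph{every} quantum stopping time, since $( Z_{\wc{S}} )_{\wc{S}} = E_S E_S Z E_S E_S = E_S Z E_S = Z_{\wc{S}}$ because $E_S$ is idempotent. For a discrete quantum stopping time $T$, the integral definition (\ref{eqn:idstopop}) of $Z_{\wh{T}}$ agrees with the discrete definition in Definition~\ref{def:discrete_stop}, by Lemma~\ref{lem:idclosedmg} applied to the constant process $Z$, so there is no clash of notation. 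With this understood, the three remaining composition identities are exactly Proposition~\ref{prp:twostops} for the constant process $X = Z$, and the final identity $E_T Z_{\wh{T}} = Z_{\wc{T}}$ is the constant-process case of Proposition~\ref{prp:procSad}.

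To pass to a general quantum stopping time $S$, I would choose a decreasing sequence of discrete quantum stopping times $S_n$ with $S_n \qsto S$, as in Remark~\ref{rem:qsconv}. The bounds $\| Z_{\wc{S_n}} \| \le \| Z \|$ and $\| Z_{\wh{S_n}} \| \le \| Z \|$ from Theorem~\ref{thm:idstopop} keep the inner stopped operators in the fixed ball $\{ W \in \bop{\fock}{} : \| W \| \le \| Z \| \}$, so the joint continuity of $( W, \, S ) \mapsto W_{\wc{S}}$ (Definition~\ref{def:stopop}) and of $( W, \, S ) \mapsto W_{\wh{S}}$ (the remark following Theorem~\ref{thm:idstopop}) may be applied. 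For instance, to obtain $( Z_{\wh{S}} )_{\wc{S}} = Z_{\wc{S}}$, set $W_n := Z_{\wh{S_n}}$; then $W_n \to Z_{\wh{S}}$ in the strong operator topology with $\| W_n \| \le \| Z \|$, so joint continuity of vacuum-adapted stopping gives $( W_n )_{\wc{S_n}} \to ( Z_{\wh{S}} )_{\wc{S}}$, while the discrete identity yields $( W_n )_{\wc{S_n}} = Z_{\wc{S_n}} \to Z_{\wc{S}}$, and the two limits coincide. The identities $( Z_{\wc{S}} )_{\wh{S}} = Z_{\wh{S}}$ and $( Z_{\wh{S}} )_{\wh{S}} = Z_{\wh{S}}$ follow in the same fashion, taking $W_n := Z_{\wc{S_n}}$ and $W_n := Z_{\wh{S_n}}$ respectively and invoking joint continuity of identity-adapted stopping together with the relevant discrete identity from Proposition~\ref{prp:twostops}. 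For the final claim, the discrete case gives $E_{S_n} Z_{\wh{S_n}} = Z_{\wc{S_n}}$, and one passes to the limit using that $E_{S_n} \to E_S$ strongly (Remark~\ref{rem:qsconv}), that $Z_{\wh{S_n}} \to Z_{\wh{S}}$ strongly with uniformly bounded norm, and that the product of a uniformly bounded strongly convergent sequence with a strongly convergent sequence converges strongly.

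The only point requiring care is the legitimacy of composing the two cited joint-continuity statements: each composition needs the inner stopped operator both to converge strongly and to remain in a fixed norm ball, and this is guaranteed precisely by the estimates $\| Z_{\wc{S_n}} \|, \| Z_{\wh{S_n}} \| \le \| Z \|$ supplied by Theorem~\ref{thm:idstopop}. Once this is in place the limiting arguments are routine, and the four composition identities, together with $E_S Z_{\wh{S}} = Z_{\wc{S}}$, all follow.
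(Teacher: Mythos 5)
Your proof is correct, but it takes a noticeably different (and heavier) route than the paper's. Both arguments dispose of $( Z_{\wc{S}} )_{\wc{S}} = Z_{\wc{S}}$ trivially and both ultimately rest on a discrete-to-general approximation, but the paper performs only \emph{one} limiting argument: it proves the fifth identity $E_S Z_{\wh{S}} = Z_{\wc{S}}$ for discrete $S$ (as already noted at the start of the proof of Theorem~\ref{thm:idstopop}) and extends it to general $S$ by approximation; everything else is then obtained algebraically. The key fact the paper exploits, which your argument never uses, is that the defining formula (\ref{eqn:idstopop}) for $W_{\wh{S}}$ depends on $W$ only through $W_{\wc{S}}$. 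Consequently $( Z_{\wc{S}} )_{\wh{S}} = Z_{\wh{S}}$ follows at once from the first identity, the second identity follows from the fifth by sandwiching with $E_S$ (via $E_S ( Z_{\wh{S}} - Z_{\wc{S}} ) = 0$), and the fourth follows from the second exactly as the third followed from the first. Your route instead establishes all the identities at the discrete level via Propositions~\ref{prp:twostops} and~\ref{prp:procSad} --- including the worthwhile step, left implicit in the paper, of reconciling Definition~\ref{def:discrete_stop} with (\ref{eqn:idstopop}) through Lemma~\ref{lem:idclosedmg} --- and then runs four separate limiting arguments by composing the two joint-continuity statements. That composition is legitimate, and your care with the uniform norm bounds (note that $\| Z_{\wc{S_n}} \| \le \| Z \|$ is immediate from $\| E_{S_n} \| \le 1$ rather than from Theorem~\ref{thm:idstopop}) is exactly what makes it work; but it leans repeatedly on the joint continuity of $( Z, S ) \mapsto Z_{\wh{S}}$, which the paper only asserts as ``readily verified'' in a remark, whereas the paper's algebraic shortcuts confine the approximation machinery to a single, simpler limit. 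In short: your proof is valid and more uniform; the paper's is leaner because it lets the integral formula carry the load.
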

\begin{proof}
The first identity is trivial, and the third follows immediately from
it. The fifth is true if~$S$ is discrete, as noted in the proof of
Theorem~\ref{thm:idstopop}, so holds in general by approximation. The
second is now immediate, and the fourth identity follows from the
third and the second.
\end{proof}

\begin{remark}
If $Z$, $W \in \bop{\fock}{}$ and $S$ is a quantum stopping time then
working as in the proof of Theorem~\ref{thm:idstopop} gives that
\[
( Z_{\wh{S}} W )_{\wh{S}} - Z_{\wh{S}} W_{\wh{S}} = %
\int_0^\infty I_\mul \otimes S\bigl( [ 0, s ] \bigr) %
\wh{\pi}( Z_{\wc{S}} )_s S\bigl( ( s, \infty ] \bigr) %
\wh{\pi}( W_{\wc{S}} )_s S\bigl( [ 0, s ] \bigr) \std \Lambda_s.
\]
(\Cf the formula given in \cite[Remark~3.13]{Coq06}.) Thus, as noted
by Coquio, the map~$Z \mapsto Z_{\wh{S}}$ is not, in general, a
conditional expectation on~$\bop{\fock}{}$, in contrast to
vacuum-adapted stopping: see Proposition~\ref{prp:vaccondexp}.
\end{remark}

\begin{definition}[{\cite[Theorem~3.5]{Coq06}}]
The result of identity-adapted stopping a martingale $M$ closed by
$M_\infty$ at a quantum stopping time $S$ is
\[
M_{\wh{S}} := ( M_\infty )_{\wh{S}} = M_{\wc{S}} + %
\int_0^\infty I_\mul \otimes S\bigl( [ 0, s ] \bigr) %
\wh{\pi}\bigl( M_{\wc{S}} \bigr)_s S\bigl( [ 0, s ] \bigr) %
\std \Lambda_s.
\]
\end{definition}

\begin{theorem}\label{thm:admg}
If $S$ is a quantum stopping time and $M$ is a martingale then
\[
\wc{\pi}( M_{\wc{S}} )_t = M_{\wc{S \wedge t}} \qquad \text{and} \qquad %
\wh{\pi}( M_{\wh{S}} )_t = M_{\wh{S \wedge t}} %
\qquad \text{for all } t \in [ 0, \infty ];
\]
in particular, the processes $M^{\wc{S}}$ and $M^{\wh{S}}$ are
martingales closed by $M_{\wc{S}}$.
\end{theorem}
\begin{proof}
The first identity is an immediate consequence of
Theorem~\ref{thm:exporder}(iv), that $E_t E_S = E_{S \wedge t}$. The
second was established by Coquio when $S$ is discrete
\cite[Properties~3.3(3)]{Coq06} and holds in general by an
approximation argument. The final remark follows from
Propositions~\ref{prp:closedvacmg} and~\ref{prp:closedidmg}.
\end{proof}

The following result is due to Coquio \cite[Proposition~3.9]{Coq06}.

\begin{theorem}[Optional Sampling]\label{thm:idoptional}
Let $M$ be a closed martingale and let $S$ and $T$ be quantum stopping
times with $S \le T$. Then
\[
( M_{\wh{T}} )_{\wh{S}} = M_{\wh{S}}.
\]
\end{theorem}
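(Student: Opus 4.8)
The plan is to reduce the statement to the defining integral formula~(\ref{eqn:idstopop}) for identity-adapted stopping, together with the order relation between time projections. Write $Z := M_\infty$, so that $M_{\wh{T}} = Z_{\wh{T}}$ and $M_{\wh{S}} = Z_{\wh{S}}$ by the definition of identity-adapted stopping, and observe that $(M_{\wh{T}})_{\wh{S}} = (Z_{\wh{T}})_{\wh{S}}$ is the result of applying the map $W \mapsto W_{\wh{S}}$ of Theorem~\ref{thm:idstopop} to the bounded operator $W := Z_{\wh{T}}$. The crucial structural remark is that the right-hand side of~(\ref{eqn:idstopop}) depends on $W$ only through $W_{\wc{S}} = E_S W E_S$, since both the leading term and the integrand $\wh{\pi}( W_{\wc{S}} )_s$ are built from $W_{\wc{S}}$. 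Hence it will suffice to identify $(Z_{\wh{T}})_{\wc{S}}$.

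The key step is to show that $(Z_{\wh{T}})_{\wc{S}} = Z_{\wc{S}}$. Since $S \le T$, Theorem~\ref{thm:exporder}(i) gives $E_S \le E_T$, so that $E_S E_T = E_S = E_T E_S$. Using $E_T Z_{\wh{T}} = Z_{\wc{T}}$ from Proposition~\ref{prp:stopidem} and $Z_{\wc{T}} = E_T Z E_T$, I would then compute
\[
(Z_{\wh{T}})_{\wc{S}} = E_S Z_{\wh{T}} E_S = E_S E_T Z_{\wh{T}} E_S = E_S Z_{\wc{T}} E_S = E_S E_T Z E_T E_S = E_S Z E_S = Z_{\wc{S}},
\]
where the intermediate factors $E_T$ are absorbed throughout using $E_S E_T = E_S = E_T E_S$. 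Thus stopping the operator $Z_{\wh{T}}$ in the vacuum-adapted sense at the smaller time $S$ recovers $Z_{\wc{S}} = M_{\wc{S}}$.

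Finally, substituting $(Z_{\wh{T}})_{\wc{S}} = M_{\wc{S}}$ into the defining formula~(\ref{eqn:idstopop}) for $(Z_{\wh{T}})_{\wh{S}}$ yields
\[
(Z_{\wh{T}})_{\wh{S}} = M_{\wc{S}} + \int_0^\infty I_\mul \otimes S\bigl( [ 0, s ] \bigr) \wh{\pi}\bigl( M_{\wc{S}} \bigr)_s S\bigl( [ 0, s ] \bigr) \std \Lambda_s,
\]
which is exactly $M_{\wh{S}}$ by the definition of identity-adapted stopping of the closed martingale $M$. I expect the main difficulty to be conceptual rather than computational: one must recognise that the integrand in~(\ref{eqn:idstopop}) is determined by $W_{\wc{S}}$ alone, so that the identity-adapted optional-sampling theorem is in fact governed entirely by its vacuum-adapted counterpart through the projection ordering $E_S \le E_T$. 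An alternative route would approximate $S$ and $T$ by discrete stopping times and invoke the joint continuity of $(W, S) \mapsto W_{\wh{S}}$, but arranging discrete approximations that preserve $S \le T$ makes this less direct than the computation above.
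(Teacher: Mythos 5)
Your proof is correct and is essentially identical to the paper's: both establish $( M_{\wh{T}} )_{\wc{S}} = M_{\wc{S}}$ via $E_S E_T = E_S$ (Theorem~\ref{thm:exporder}(i)) and $E_T M_{\wh{T}} = M_{\wc{T}}$ (Proposition~\ref{prp:stopidem}), then substitute into the defining formula~(\ref{eqn:idstopop}), which depends on the stopped operator only through its vacuum-adapted stopping at $S$.
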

\begin{proof}
Note first that, since $E_S E_T = E_S$, so
\[
( M_{\wh{T}} )_{\wc{S}} = E_S M_{\wh{T}} E_S = %
E_S E_T M_{\wh{T}} E_S = E_S M_{\wc{T}} E_S = M_{\wc{S}},
\]
where the penultimate equality follows from
Proposition~\ref{prp:stopidem}. Hence
\begin{align*}
( M_{\wh{T}} )_{\wh{S}} & = ( M_{\wh{T}} )_{\wc{S}} + %
\int_0^\infty I_\mul \otimes S\bigl( [ 0, s ] \bigr) %
\wh{\pi}\bigl( ( M_{\wh{T}} )_{\wc{S}} \bigr)_s %
S\bigl( [ 0, s ] \bigr) \std \Lambda_s \\[1ex]
 & = M_{\wc{S}} + \int_0^\infty I_\mul \otimes %
S\bigl( [ 0, s ] \bigr) \wh{\pi}\bigl( M_{\wc{S}} \bigr)_s %
S\bigl( [ 0, s ] \bigr) \std \Lambda_s \\[1ex]
 & = M_{\wh{S}}.
\qedhere
\end{align*}
\end{proof}

\begin{question}
Suppose $S$ and $T$ are quantum stopping times, with $S \le T$.
If~$Z \in \bop{\fock}{}$ then
\begin{equation}\label{eqn:same}
( Z_{\wc{S}} )_{\wc{T}} = ( Z_{\wc{T}} )_{\wc{S}} = Z_{\wc{S}} = %
( Z_{\wh{T}} )_{\wc{S}},
\end{equation}
by Theorem~\ref{thm:exporder} and the proof of
Theorem~\ref{thm:idoptional}, whereas
\begin{equation}
( Z_{\wc{T}} )_{\wh{S}} = Z_{\wh{S}} = ( Z_{\wh{T}} )_{\wh{S}},
\end{equation}
by the second identity in (\ref{eqn:same}) and
Theorem~\ref{thm:idoptional}. What are
\[
( Z_{\wh{S}} )_{\wc{T}} , \qquad ( Z_{\wc{S}} )_{\wh{T}} \qquad %
\text{and} \qquad ( Z_{\wh{S}} )_{\wh{T}}?
\]
\end{question}

\section{Stopping closed FV processes}\label{sec:FV}

\begin{definition}
The process $Y = ( Y_t )_{t \in \R_+}$ is said to be an
\emph{FV process} if there exists an \emph{integrand process}~$H$ such
that $s \mapsto H_s x$ is strongly measurable for all $x \in \fock$
and $\| H \| : s \mapsto \| H_s \|$ is locally integrable, so that the
integral
\[
Y_t = \int_0^t H_s \std s
\]
exists pointwise as a Bochner integral for all $t \in \R_+$; we
write~$Y = \int H \std t$ to denote this. If $\| H \|$ is integrable
then $Y_\infty := \int_0^\infty H_t \std t$ exists pointwise and the
FV process $Y$ is \emph{closed} by~$Y_\infty$.

The sets of FV processes and closed FV processes are subalgebras of
the algebra of processes.

If the integrand process $H$ is identity adapted or vacuum adapted
then $Y$ has the same property.
\end{definition}

\begin{definition}
If $Y = \int H \std t$ is an FV process which is closed by $Y_\infty$
and the discrete quantum stopping time  $T$ has support
$\{ t_1 < \cdots < t_n \}$ then, letting
$T_i := T\bigl( \{ t_i \} \bigr)$ as usual,
\begin{align*}
Y_{\wc{T}} & = \sum_{i, j = 1}^n %
T_i E_{t_i} Y_{t_i \vee t_j} E_{t_j} T_j \\[1ex]
 & = E_T Y_\infty E_T - \sum_{i, j = 1}^n \int_{t_i \vee t_j}^\infty %
T_i E_{t_i} H_s E_{t_j} T_j \std s \\[1ex]
 & = E_T Y_\infty E_T - %
\int_0^\infty \sum_{i, j = 1}^n 1_{[ t_i, \infty )}( s ) T_i E_{t_i} %
H_s 1_{[ t_j, \infty )}( s ) E_{t_j} T_j \std s \\[1ex]
 & = E_T Y_\infty E_T - E_T \int_0^\infty T\bigl( [ 0, s ] \bigr) %
H_s T\bigl( [ 0, s ] \bigr) \std s \, E_T.
\end{align*}
Hence, for any quantum stopping time $S$ we let
\begin{align*}
Y_{\wc{S}} & := E_S \Bigl( Y_\infty - \int_0^\infty %
S\bigl( [ 0, s ] \bigr) H_s %
S\bigl( [ 0, s ] \bigr) \std s \Bigr) E_S \\[1ex]
 & \hphantom{:}= \int_0^\infty E_S \Bigl( H_s - %
S\bigl( [ 0, s ] \bigr) H_s %
S\bigl( [ 0, s ] \bigr) \Bigr) E_S \std s.
\end{align*}
Note that $E_S Y_{\wc{S}} = Y_{\wc{S}} = Y_{\wc{S}} E_S$, and that
$S \mapsto Y_{\wc{S}}$ is continuous when the collection of quantum
stopping times has the topology described in Remark~\ref{rem:qsconv}
and $\bop{\fock}{}$ is equipped with the strong operator topology.
\end{definition}

\begin{remark}
If $S$ corresponds to a classical stopping time $\tau$ and
$Y = \int H \std t$ is a classical FV process, for any $\omega$ in the
underlying sample space,
\begin{align*}
\int_0^\infty \Bigl( H_s - %
S\bigl( [ 0, s ] \bigr) H_s S\bigl( [ 0, s ] \bigr) \Bigr)%
( \omega ) \std s & = %
\int_0^\infty \bigl( S\bigl( ( s, \infty ] \bigr)%
H_s \bigr)( \omega ) \std s \\[1ex]
 & = \int_0^{\tau( \omega )} H_s( \omega ) \std s \\[1ex]
& = ( Y_\tau ) ( \omega ).
\end{align*}
\end{remark}

\begin{proposition}
If $S$ is a quantum stopping time, $Y$ is an FV process which is
closed by $Y_\infty$ and $t \in [ 0, \infty ]$ then
\[
S\bigl( [ 0, t ] \bigr) Y_{\wc{S}} S\bigl( [ 0, t ] \bigr) = %
S\bigl( [ 0, t ] \bigr) Y_{\wc{S \wedge t}} S\bigl( [ 0, t ] \bigr).
\]
\end{proposition}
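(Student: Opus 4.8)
The plan is to work throughout from the single-integral form
\[
Y_{\wc{S}} = \int_0^\infty E_S \bigl( H_s - S\bigl( [ 0, s ] \bigr) E_s H_s E_s S\bigl( [ 0, s ] \bigr) \bigr) E_S \std s
\]
and the analogous expression for $Y_{\wc{S \wedge t}}$, in which $S$ is replaced by $S \wedge t$. First I would pull the flanking projections $S\bigl( [ 0, t ] \bigr)$ inside the (Bochner) integral and split the range as $\int_0^t + \int_t^\infty$. The goal is to show that \emph{both} $S\bigl( [ 0, t ] \bigr) Y_{\wc{S}} S\bigl( [ 0, t ] \bigr)$ and $S\bigl( [ 0, t ] \bigr) Y_{\wc{S \wedge t}} S\bigl( [ 0, t ] \bigr)$ collapse to the common integral
\[
\int_0^t S\bigl( [ 0, t ] \bigr) E_S \bigl( H_s - S\bigl( [ 0, s ] \bigr) E_s H_s E_s S\bigl( [ 0, s ] \bigr) \bigr) E_S S\bigl( [ 0, t ] \bigr) \std s
\]
over the truncated range, the point being that the contributions from $[ t, \infty )$ vanish identically in each case and the integrands agree on $[ 0, t )$.

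The computation rests on three operator identities, all flowing from Proposition~\ref{prp:Sdecomp}, namely $E_{S \wedge s} = S\bigl( [ 0, s ] \bigr) E_S + S\bigl( ( s, \infty ] \bigr) E_s$, together with the facts that $E_s$ commutes with $S\bigl( [ 0, s ] \bigr)$ by adaptedness and that $S$ is multiplicative on intersections (Proposition~\ref{prp:qst}). Right-multiplying the decomposition and its adjoint by $S\bigl( [ 0, s ] \bigr)$, and using $S\bigl( ( s, \infty ] \bigr) S\bigl( [ 0, s ] \bigr) = 0$, gives $S\bigl( [ 0, s ] \bigr) E_S S\bigl( [ 0, s ] \bigr) = E_{S \wedge s} S\bigl( [ 0, s ] \bigr) = E_S S\bigl( [ 0, s ] \bigr)$; hence $S\bigl( ( s, \infty ] \bigr) E_S S\bigl( [ 0, s ] \bigr) = 0$ and, by taking adjoints, $S\bigl( [ 0, s ] \bigr) E_S S\bigl( ( s, \infty ] \bigr) = 0$. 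Since $S\bigl( [ 0, t ] \bigr) = S\bigl( [ 0, t ] \bigr) S\bigl( [ 0, s ] \bigr)$ when $t \le s$, this yields the absorption identity $S\bigl( [ 0, t ] \bigr) E_S S\bigl( [ 0, s ] \bigr) = S\bigl( [ 0, t ] \bigr) E_S$. Next, Theorem~\ref{thm:exporder}(iv) and the decomposition give $S\bigl( [ 0, t ] \bigr) E_S E_s = S\bigl( [ 0, t ] \bigr) E_{S \wedge s} = S\bigl( [ 0, t ] \bigr) E_S$ for $s \ge t$, since $S\bigl( [ 0, t ] \bigr)$ annihilates $S\bigl( ( s, \infty ] \bigr)$. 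Finally, the same mechanism applied at time $t$ gives $S\bigl( [ 0, t ] \bigr) E_{S \wedge t} = S\bigl( [ 0, t ] \bigr) E_S$ and, by adjoints, $E_{S \wedge t} S\bigl( [ 0, t ] \bigr) = E_S S\bigl( [ 0, t ] \bigr)$.

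With these in hand I would substitute the last identity into the representation of $Y_{\wc{S \wedge t}}$ to convert every $E_{S \wedge t}$ into $E_S$, recalling that $( S \wedge t )\bigl( [ 0, s ] \bigr)$ equals $S\bigl( [ 0, s ] \bigr)$ for $s < t$ and equals $I$ for $s \ge t$. On $[ 0, t )$ the two integrands then coincide verbatim. On $[ t, \infty )$, the absorption identity together with $S\bigl( [ 0, t ] \bigr) E_S E_s = S\bigl( [ 0, t ] \bigr) E_S$ reduces the left bracket $S\bigl( [ 0, t ] \bigr) E_S S\bigl( [ 0, s ] \bigr) E_s$ (from $Y_{\wc{S}}$) and $S\bigl( [ 0, t ] \bigr) E_S E_s$ (from $Y_{\wc{S \wedge t}}$) both to $S\bigl( [ 0, t ] \bigr) E_S$, and symmetrically the right brackets to $E_S S\bigl( [ 0, t ] \bigr)$; thus in each case the subtracted term equals $S\bigl( [ 0, t ] \bigr) E_S H_s E_S S\bigl( [ 0, t ] \bigr)$, which exactly cancels the leading $H_s$ term, so both integrands vanish on $[ t, \infty )$. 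Both sides therefore equal the common truncated integral, which establishes the claim.

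The main obstacle is the absorption identity $S\bigl( [ 0, s ] \bigr) E_S S\bigl( [ 0, s ] \bigr) = E_S S\bigl( [ 0, s ] \bigr)$: it is precisely what allows a projection $S\bigl( [ 0, t ] \bigr)$ imposed on the left to pass through $E_S$ and trivialise the post-$t$ truncation $S\bigl( [ 0, s ] \bigr)$ in the integrand, and it is the one place where Proposition~\ref{prp:Sdecomp} must be used in both its stated and its adjoint form. A secondary, routine concern is the justification for moving the bounded operators $S\bigl( [ 0, t ] \bigr)$ inside the Bochner integral and for splitting the range at $t$; this is legitimate because the integrals defining $Y_{\wc{S}}$ and $Y_{\wc{S \wedge t}}$ exist by hypothesis and their integrands are strongly measurable by the corollary to Lemma~\ref{lem:stmeas}.
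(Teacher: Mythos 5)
Your argument is correct and is essentially the paper's: both proofs conjugate the defining formula for $Y_{\wc{S}}$ by $S\bigl( [ 0, t ] \bigr)$ and use Proposition~\ref{prp:Sdecomp} to trade $E_S$ for $E_{S \wedge t}$ and the $S\bigl( [ 0, s ] \bigr)$-truncations for $( S \wedge t )$-truncations. The only difference is organisational: the paper keeps a single integral over $[ 0, \infty )$, rewriting its integrand via $S\bigl( [ 0, s \wedge t ] \bigr) = S\bigl( [ 0, t ] \bigr) ( S \wedge t )\bigl( [ 0, s ] \bigr)$, whereas you split the integral at $t$ and show the tail vanishes; the absorption identities you derive are precisely what the paper's terser computation leaves implicit.
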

\begin{proof}
Note that, by the definition of~$Y_{\wc{S}}$ and
Proposition~\ref{prp:Sdecomp},
\begin{align*}
S\bigl( [ 0, t ] \bigr) Y_{\wc{S}} S\bigl( [ 0, t ] \bigr) & = %
S\bigl( [ 0, t ] \bigr) E_{S \wedge t} Y_\infty E_{S \wedge t} %
S\bigl( [ 0, t ] \bigr) \\
 & \qquad - E_{S \wedge t} \int_0^\infty %
S\bigl( [ 0, s \wedge t ] \bigr) H_s %
S\bigl( [ 0, s \wedge t ] \bigr) \std s \, E_{S \wedge t} \\[1ex]
 & = S\bigl( [ 0, t ] \bigr) Y_{S \wedge t} S\bigl( [ 0, t ] \bigr),
\end{align*}
since
$S\bigl( [ 0, s \wedge t ] \bigr) = %
S\bigl( [ 0, t ] \bigr) ( S \wedge t )\bigl( [ 0, s ] \bigr)$
for all $s \in \R_+$.
\end{proof}

\begin{definition}
A \emph{semimartingale} is a process of the form $X = M + Y$,
where~$M$ is a martingale closed by $M_\infty$ and $Y = \int H \std t$
is a FV process closed by $Y_\infty$, with the integrand process~$H$
is identity or vacuum adapted.

(Strictly speaking, this is a \emph{closed} semimartingale, but no
other sort of semimartingale will be considered.)

For any quantum stopping time~$S$, the stopped semimartingale
\begin{align}\label{eqn:ssmg}
X_{\wc{S}} & := M_{\wc{S}} + Y_{\wc{S}} \nonumber \\[1ex]
 & \hphantom{:}= E_S ( M_\infty + Y_\infty ) E_S - %
\int_0^\infty E_S S\bigl( [ 0, s ] \bigr) H_s %
S\bigl( [ 0, s ] \bigr) E_S \std s.
\end{align}
\end{definition}

The following lemma shows that the decomposition $X = M + Y$ is
unique; it follows that~(\ref{eqn:ssmg}) is a good definition.

\begin{lemma}\label{lem:decomp}
Let $M$ be a martingale, let $Y = \int H \std t$ be an FV process and
suppose the integrand process $H$ is either identity adapted or vacuum
adapted. If~$M + Y = 0$ then $M = 0$ and~$Y = 0$.
\end{lemma}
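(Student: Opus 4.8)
The plan is to reduce the statement to showing $Y = 0$: since $M + Y = 0$ forces $M = -Y$, proving $Y = 0$ yields $M = 0$ as well. Note that $M = -Y$ is then both a martingale and an FV process with identity- or vacuum-adapted integrand, so all the adaptedness hypotheses are mutually consistent. The engine of the argument is the martingale property. Since $M$ is adapted, $E_s M_s E_s = M_s E_s$, so for $s \le t$ the martingale identity $E_s M_t E_s = M_s E_s$ gives $E_s ( M_t - M_s ) E_s = 0$. Writing $M_t - M_s = -\int_s^t H_r \std r$ and pulling the bounded operators $E_s$ through the Bochner integral, I obtain the basic relation
\[
\int_s^t E_s H_r E_s \std r = 0 \qquad ( 0 \le s \le t ).
\]

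Testing against exponential vectors $\evec{f}$, $\evec{g}$, the map $t \mapsto \int_s^t \langle E_s \evec{f}, H_r E_s \evec{g} \rangle \std r$ vanishes identically, so for each fixed $s$ and each pair $f$, $g$ one has $\langle E_s \evec{f}, H_r E_s \evec{g} \rangle = 0$ for almost every $r \ge s$. Ranging $s$ over the rationals and taking the countable union of the exceptional null sets, there is a single null set (depending on $f$, $g$) off which this holds for \emph{every} rational $s \le r$. Since $s \mapsto E_s \evec{f} = \evec{1_{[ 0, s )} f}$ is strongly continuous, letting $s \uparrow r$ through the rationals gives
\[
\langle E_r \evec{f}, H_r E_r \evec{g} \rangle = 0 \qquad \text{for a.e. } r.
\]
This passage to the diagonal is where the strong continuity of $( E_s )_{s \in \R_+}$ is essential.

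In the vacuum-adapted case $H_r = E_r H_r E_r$ for $r > 0$, so the last display reads $\langle \evec{f}, H_r \evec{g} \rangle = 0$ almost everywhere; integrating gives $\langle \evec{f}, Y_t \evec{g} \rangle = 0$ for all $t$, and totality of the exponential vectors yields $Y = 0$. In the identity-adapted case the subtlety---and the main obstacle---is that $E_t H_r E_t \neq E_r H_r E_r$ for $r < t$, so vanishing on the diagonal does not at once annihilate $Y_t$. Here I would first note that it suffices to prove $E_t Y_t E_t = 0$: writing the identity-adapted operator $Y_t = A \otimes I_{[t}$ with $A \in \bop{\fock_{t)}}{}$, one has $E_t Y_t E_t = A \otimes P^\Vac_{[t}$, which vanishes only when $A = 0$, \ie when $Y_t = 0$. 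Writing $H_r = A_r \otimes I_{[r}$ and factorising the exponential vectors across $\fock_{r)} \otimes \fock_{[ r, t )} \otimes \fock_{[t}$ for $r \le t$, a short computation gives
\[
\langle \evec{f}, E_t H_r E_t \evec{g} \rangle
= \langle \evec{f}, E_r H_r E_r \evec{g} \rangle \,
\exp\Bigl( \int_r^t \langle f( u ), g( u ) \rangle \std u \Bigr),
\]
so the off-diagonal matrix element is a nonzero scalar multiple of the diagonal one and therefore also vanishes for almost every $r \le t$. Integrating over $r \in [ 0, t ]$ gives $\langle \evec{f}, E_t Y_t E_t \evec{g} \rangle = 0$ for all $f$, $g$, whence $E_t Y_t E_t = 0$ and $Y_t = 0$. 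In both cases $Y = 0$, and hence $M = -Y = 0$.
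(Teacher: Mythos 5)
Your proof is correct and takes essentially the same route as the paper's: reduce to showing $Y = 0$, use the martingale property together with $M = -Y$ to obtain $\int_s^t E_s H_r E_s \std r = 0$ for all $s \le t$, and then deduce that the integrand vanishes almost everywhere. The paper's own proof is terser---it passes from this identity to $E_s H_s E_s = 0$ a.e.\ and thence to $H = 0$ without elaboration---so your rational-$s$/strong-continuity argument for the diagonal and your treatment of the identity-adapted case via the nonvanishing exponential factor simply make explicit the steps the paper leaves implicit.
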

\begin{proof}
It suffices to prove that $Y = 0$. To see this, first recall that $M$
is a martingale if and only if
$E_s M_t E_s = M_s E_s = E_s M_s$ for all $s$, $t \in \R_+$ such
that~$s \le t$. Then, for such $s$ and $t$,
\[
0 = E_s ( M_t + Y_t - M_s - Y_s ) E_s = E_s ( Y_t - Y_s ) E_s = %
\int_s^t E_s H_r E_s \std r;
\]
given $x \in \fock$ it follows that
$\wc{\pi}( H )_s x = E_s H_s E_s x = 0$ for almost all $s \in \R_+$,
so~$s \mapsto H_s x = 0$ almost everywhere and the claim follows.
\end{proof}

The next result was observed by Coquio
\cite[Proof of Proposition~3.15]{Coq06}.

\begin{lemma}\label{lem:idfvint}
Let the FV process $Y = \int H \std t$ be closed by $Y_\infty$.
Then
\[
\wh{\pi}( Y )_t = E_t Y_\infty E_t + %
\int_t^\infty I_\mul \otimes \wh{\pi}( E_t Y_s E_t )_s %
\std \Lambda_s - %
\int_t^\infty \wh{\pi}( E_t H_s E_t )_s \std s
\]
for all $t \in [ 0, \infty ]$.
\end{lemma}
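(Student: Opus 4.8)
The plan is to establish the identity weakly, by testing both sides against exponential vectors $\evec{f}$, $\evec{g}$ with $f$, $g \in \elltwo$; since $\evecs$ is total and the gauge and Bochner integrals appearing are determined by their matrix elements against such vectors (via the fundamental formula, Lemma~\ref{lem:qsi}), this suffices. The one ingredient I would record first is the factorisation
\[
\langle \evec{f}, \wh{\pi}( Z )_s \evec{g} \rangle =
\langle E_s \evec{f}, Z E_s \evec{g} \rangle \,
\exp \langle 1_{[ s, \infty )} f, 1_{[ s, \infty )} g \rangle,
\]
which is immediate from the definition $\wh{\pi}( Z )_s = E_s Z|_{\fock_{s)}} \otimes I_{[s}$ and the tensor splitting $\fock = \fock_{s)} \otimes \fock_{[s}$. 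Writing $\beta( s ) := \exp \langle 1_{[ s, \infty )} f, 1_{[ s, \infty )} g \rangle$ and $\gamma( s ) := \langle E_t \evec{f}, Y_s E_t \evec{g} \rangle = \int_0^s \langle E_t \evec{f}, H_r E_t \evec{g} \rangle \std r$, the left-hand side becomes $\gamma( t ) \beta( t )$.

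For the right-hand side I would treat the three terms in turn, using throughout that $E_t E_s = E_t$ for $s \ge t$ to collapse the nested compressions. The first term gives $\langle \evec{f}, E_t Y_\infty E_t \evec{g} \rangle = \gamma( \infty )$. For the gauge integral, the fundamental formula supplies a factor $\langle f( s ), g( s ) \rangle$, and the factorisation above turns its integrand into $\gamma( s ) \beta( s )$, so the second term equals $\int_t^\infty \langle f( s ), g( s ) \rangle \gamma( s ) \beta( s ) \std s$; since $\beta'( s ) = -\langle f( s ), g( s ) \rangle \beta( s )$, this is $-\int_t^\infty \beta'( s ) \gamma( s ) \std s$. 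The third term, an ordinary Bochner integral, contributes $-\int_t^\infty \gamma'( s ) \beta( s ) \std s$, because $\gamma'( s ) = \langle E_t \evec{f}, H_s E_t \evec{g} \rangle$ by the definition of $\gamma$.

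Adding the three contributions, the two $s$-integrals recombine by the product rule into $-\int_t^\infty \frac{\rd}{\rd s}\bigl( \beta( s ) \gamma( s ) \bigr) \std s = \beta( t ) \gamma( t ) - \beta( \infty ) \gamma( \infty )$; as $\beta( \infty ) = 1$ this cancels the first term $\gamma( \infty )$ and leaves exactly $\gamma( t ) \beta( t )$, matching the left-hand side. The conceptual content is thus simply integration by parts in $s$, with $\beta$ carrying the only $s$-dependence surviving differentiation of the exponential. The main obstacle I anticipate is analytic bookkeeping rather than algebra: I must check that $s \mapsto \wh{\pi}( E_t Y_s E_t )_s$ is a legitimate gauge integrand and that all the $s$-integrals converge absolutely (this is where closedness of $Y$, giving $Y_\infty$ and Bochner-integrability of $H$, enters), and I must justify the product-rule step, namely the absolute continuity of $s \mapsto \gamma( s )$ and the interchange implicit in evaluating the gauge integral. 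A cleaner-looking but ultimately equivalent alternative would be to apply Proposition~\ref{prp:Mint} to $Y_t$ and then absorb the discrepancy $E_t( Y_s - Y_t ) E_t = \int_t^s E_t H_r E_t \std r$; this reduces to the same integration by parts, so I prefer the direct matrix-element computation.
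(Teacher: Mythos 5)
Your proposal is correct and is essentially the paper's own proof: both test the identity against exponential vectors, use the defining matrix-element formula for the (identity-adapted) gauge integral to produce the factor $\langle f(s), g(s)\rangle$ times what you call $\gamma(s)\beta(s)$, and then conclude by integration by parts in $s$, with the boundary terms supplying $\wh{\pi}(Y_t)_t$ and $E_t Y_\infty E_t$. The only cosmetic difference is that the paper performs the integration by parts directly on the gauge-integral term rather than recombining all three terms via the product rule, and it cites the inner-product identity defining the gauge integral rather than Lemma~\ref{lem:qsi}.
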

\begin{proof}
If $f$, $g \in \elltwo$ then 
\begin{align*}
\langle \evec{f}&, \int_t^\infty I_\mul \otimes %
\wh{\pi}( E_t Y_s E_t )_s \std \Lambda_s \, \evec{g} \rangle \\[1ex]
 & = \int_t^\infty \langle f( s ), g( s ) \rangle \, %
\langle \evec{1_{[ 0, t )} f}, Y_s \evec{1_{[ 0, t )} g} \rangle \, %
\langle \evec{1_{[ s, \infty )} f}, %
\evec{1_{[ s, \infty )} g} \rangle \std s \\[1ex]
 & = -\Bigl[ \langle \evec{1_{[ 0, t )} f}, %
Y_s \evec{1_{[ 0, t )} g} \rangle \, %
\langle \evec{1_{[ s, \infty )} f}, %
\evec{1_{[ s, \infty )} g} \rangle \Bigr]_t^\infty \\
 & \qquad + \int_t^\infty \langle \evec{1_{[ 0, t )} f}, %
H_s \evec{1_{[ 0, t )} g} \rangle \, %
\langle \evec{1_{[ s, \infty )} f}, %
\evec{1_{[ s, \infty )} g} \rangle \std s \\[1ex]
 & = \langle \evec{f}, ( \wh{\pi}( Y )_t - E_t Y_\infty E_t ) %
\evec{g} \rangle + \langle \evec{f}, %
\int_t^\infty \wh{\pi}( E_t H_s E_t )_s \std s \, \evec{g} \rangle.
\qedhere
\end{align*}
\end{proof}

\begin{definition}
It follows from Definition~\ref{def:discrete_stop} and
Lemma~\ref{lem:idfvint} that if $T$ is a discrete quantum stopping
time and the FV process $Y = \int H \std t$ is closed by~$Y_\infty$
then
\begin{multline}\label{eqn:idstopfv}
Y_{\wh{T}} = E_T Y_\infty E_T + \int_0^\infty I_\mul \otimes %
T\bigl( [ 0, s ] \bigr) \wh{\pi}( E_T Y_s E_T )_s %
T\bigl( [ 0, s ] \bigr) \std \Lambda_s \\
 - \int_0^\infty T\bigl( [ 0, s ] \bigr) \wh{\pi}( E_T H_s E_T )_s %
T\bigl( [ 0, s ] \bigr) \std s.
\end{multline}
The identity (\ref{eqn:idstopfv}) is used by Coquio
\cite[Proposition~3.15]{Coq06} to motivate the following definition:
if~$S$ is a quantum stopping time and $X = M + Y$ is a semimartingale
then
\begin{align*}
X_{\wh{S}} & := M_{\wh{S}} + Y_{\wh{S}} \\[1ex]
 & \phantom{:}= E_S( M_\infty + Y_\infty ) E_S + %
\int_0^\infty I_\mul \otimes %
S\bigl( [ 0, s ] \bigr) \wh{\pi}( E_S X_s E_S )_s %
S\bigl( [ 0, s ] \bigr) \std \Lambda_s \\
& \hspace{13em} - %
\int_0^\infty S\bigl( [ 0, s ] \bigr) \wh{\pi}( E_S H_s E_S )_s %
S\bigl( [ 0, s ] \bigr) \std s.
\end{align*}
(Recall that $\wh{\pi}( M )_s = \wh{\pi}( M_\infty )_s$ for all
$s \in \R_+$, by the martingale property.)

Note that $S \mapsto Y_S$ is continuous when $\bop{\fock}{}$ is
equipped with the strong operator topology and the set of quantum
stopping times has the topology defined in Remark~\ref{rem:qsconv}.
\end{definition}

\begin{remark}
Let $X = M + Y$ be a semimartingale. Proposition~\ref{prp:procSad}
gives that~$E_S X_{\wh{S}} = X_{\wc{S}}$ when $S$ is discrete, and
therefore this identity holds in general by approximation. As
$E_S S\bigl( [ 0, s ] \bigr) = S\bigl( [ 0, s ] \bigr) E_S = %
S\bigl( [ 0, s ] \bigr) E_S E_s$
for all $s \in [ 0, \infty ]$, by Proposition~\ref{prp:Sdecomp},
so
\[
E_S \int_0^\infty S\bigl( [ 0, s ] \bigr) %
\wh{\pi}( E_S H_s E_S )_s S\bigl( [ 0, s ] \bigr) \std s = %
\int_0^\infty S\bigl( [ 0, s ] \bigr) %
E_S H_s E_S S\bigl( [ 0, s ] \bigr) \std s.
\]
Comparing (\ref{eqn:ssmg}) with $E_S X_{\wh{S}}$, it follows that
\begin{equation}\label{eqn:killedgauge}
E_S \int_0^\infty I_\mul \otimes %
S\bigl( [ 0, s ] \bigr) \wh{\pi}( E_S X_s E_S )_s %
S\bigl( [ 0, s ] \bigr) \std \Lambda_s = 0,
\end{equation}
and the gauge integral can again be seen as an artifact produced by
identity adaptedness.
\end{remark}

\section{Stopping regular semimartingales}\label{sec:vqsm}

For the terminology used throughout this section, see the appendix,
Section~\ref{sec:qsc}.

\begin{example}\label{eg:weyl}
Recall that, for any $f \in \elltwo$, the Weyl operator $W( f )$ is
the unitary operator on $\fock$ such that
\[
W( f ) \evec{g} = %
\exp\bigl( -\hlf \| f \|^2 - \langle f, g \rangle \bigr) %
\evec{g + f} \qquad \text{for all } g \in \elltwo.
\]
Setting
\[
W( f )_t := W( 1_{[ 0, t )} f ) = \wh{\pi}\big( W( f ) \bigr)_t
\quad \text{and} \quad %
V( f )_t := E_t W( 1_{[ 0, t )} f ) = \wc{\pi}\bigl( W( f ) \bigr)_t
\]
produces identity-adapted and vacuum-adapted regular semimartingales,
with
\begin{align*}
W( f )_t & = -\int_0^t \bra{f( s )} \otimes W( f )_s \std A_s + %
\int_0^t \ket{f( s )} \otimes W( f )_s \std A_s \\
 & \hspace{16em} - \frac{1}{2}%
\int_0^t \| f( s ) \|^2 W( f )_s \std s \\[1ex]
\text{and} \quad %
V( f )_t & = \int_0^t I_\mul \otimes V( f )_s \std \Lambda_s - %
\int_0^t \bra{f( s )} \otimes V( f )_s \std A_s \\
 & \hspace{6em} + %
\int_0^t \ket{f( s )} \otimes V( f )_s \std A_s - %
\frac{1}{2} \int_0^t \| f( s ) \|^2 V( f )_s \std s
\end{align*}
for all $t \in [ 0, \infty ]$; the operator
$\bra{x} \in \bop{\mul}{\C}$ is such that
$\bra{x} y = \langle x, y \rangle$ for all~$x$, $y \in \mul$, and
its adjoint $\ket{x} \in \bop{\C}{\mul}$ is such
that~$\ket{x} \lambda = \lambda x$ for all $\lambda \in \C$
and~$x \in \mul$. (\Cf Theorem~\ref{thm:switch}.)
\end{example}

The class of regular $\Omega$-semimartingales is closed under
vacuum-adapted stopping; the following result gives this in explicit
form. Coquio obtained an analogous result for regular quantum
semimartingales \cite[Proposition~3.16]{Coq06}.

\begin{theorem}\label{thm:vacsmg}
Let the regular $\Omega$-semimartingale $X = M + Y$ have martingale
part $M = \int N \std \Lambda + P \std A + Q \std A^\dagger$ and FV
part $Y = \int R \std t$. If $S$ is a quantum stopping
time then
\[
X_{\wc{S}} = %
\int_0^\infty {}^{\wc{S}} N_s \std \Lambda_s + %
\int_0^\infty {}^{\wc{S}} P_s \std A_s + %
\int_0^\infty {}^{\wc{S}} Q_s \std A^\dagger_s + %
\int_0^\infty {}^{\wc{S}} R_s \std s,
\]
where, for all $s \in \R_+$,
\begin{align*}
{}^{\wc{S}} N_s & := %
( I_\mul \otimes S\bigl( ( s, \infty ] \bigr) ) N_s %
( I_\mul \otimes S\bigl( ( s, \infty ] \bigr) ), \\[1ex]
{}^{\wc{S}} P_s & := %
E_S P_s ( I_\mul \otimes S\bigl( ( s, \infty ] \bigr) ) \\[1ex]
{}^{\wc{S}} Q_s & := %
( I_\mul \otimes S\bigl( ( s, \infty ] \bigr) ) Q_s E_S \\[1ex]
\text{and} \quad {}^{\wc{S}} R_s & := E_S R_s E_S - %
S\bigl( [ 0, s ] \bigr) E_S R_s E_S S\bigl( [ 0, s ] \bigr).
\end{align*}
\end{theorem}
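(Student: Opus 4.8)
The plan is to treat the martingale part $M$ and the finite-variation part $Y$ separately, since $X_{\wc{S}} = M_{\wc{S}} + Y_{\wc{S}}$ by definition and the decomposition $X = M + Y$ is unique (Lemma~\ref{lem:decomp}). The central tools are the integral representation of the time projection (Theorem~\ref{thm:qstint}) and the quantum It\^o product formula (Theorem~\ref{thm:qipf}).

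For the martingale part I would exploit the remark following Theorem~\ref{thm:qstint}: $( E_{S \wedge t} )_{t \in \R_+}$ is a regular $\Omega$-martingale closed by $E_S$, with the purely gauge representation $E_{S \wedge t} = E_0 + \int_0^t I_\mul \otimes S\bigl( ( s, \infty ] \bigr) E_s \std \Lambda_s$. Since $M$ is vacuum adapted, $M_t = E_t M_\infty E_t$ and hence $E_{S \wedge t} M_\infty E_{S \wedge t} = E_{S \wedge t} M_t E_{S \wedge t}$ (using $E_{S \wedge t} E_t = E_{S \wedge t}$); this tends to $M_{\wc{S}} = E_S M_\infty E_S$ in the strong operator topology as $t \to \infty$, because $S \wedge t \qsto S$ and $\wc{S}$-stopping is strongly continuous (Remark~\ref{rem:qsconv}). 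I would therefore apply the product formula to the triple product of the regular $\Omega$-(semi)martingales $E_{S \wedge t}$, $M_t$ and $E_{S \wedge t}$, integrate, and let $t \to \infty$. Collecting terms by integrator and using $E_{S \wedge s} = S\bigl( [ 0, s ] \bigr) E_S + S\bigl( ( s, \infty ] \bigr) E_s$ (Proposition~\ref{prp:Sdecomp}), $S\bigl( ( s, \infty ] \bigr) E_{S \wedge s} = S\bigl( ( s, \infty ] \bigr) E_s$ and $E_S E_s = E_{S \wedge s}$ (Theorem~\ref{thm:exporder}(iv)), together with the vacuum adaptedness of $N$, $P$ and $Q$, the $\std \Lambda$, $\std A$ and $\std A^\dagger$ coefficients should collapse to ${}^{\wc{S}} N_s$, ${}^{\wc{S}} P_s$ and ${}^{\wc{S}} Q_s$. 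A key structural point is that $M$ enters each product term at most once, so no $\std A \cdot \std A^\dagger$ cross-term arises and the martingale part contributes nothing to the $\std t$ integral.

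For the finite-variation part I would work directly from the definition $Y_{\wc{S}} = \int_0^\infty E_S \bigl( R_s - S\bigl( [ 0, s ] \bigr) E_s R_s E_s S\bigl( [ 0, s ] \bigr) \bigr) E_S \std s$. Using $E_s R_s E_s = R_s$ and the fact that a short computation from Proposition~\ref{prp:Sdecomp} makes $E_{S \wedge s}$ commute with $S\bigl( [ 0, s ] \bigr)$ on $\im E_s$, I would rewrite $E_S S\bigl( [ 0, s ] \bigr) E_s R_s E_s S\bigl( [ 0, s ] \bigr) E_S$ as $S\bigl( [ 0, s ] \bigr) E_S R_s E_S S\bigl( [ 0, s ] \bigr)$, so that the integrand becomes exactly ${}^{\wc{S}} R_s = E_S R_s E_S - S\bigl( [ 0, s ] \bigr) E_S R_s E_S S\bigl( [ 0, s ] \bigr)$. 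Adding the two parts then yields the stated four-integral formula.

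The main obstacle is twofold. First, the bookkeeping in the product-formula expansion is heavy, and verifying that the gauge, annihilation and creation coefficients really reduce to the compact forms ${}^{\wc{S}} N_s$, ${}^{\wc{S}} P_s$, ${}^{\wc{S}} Q_s$ — rather than retaining spurious dependence on $E_{S \wedge s}$ or on $M_s$ — turns delicately on the precise vacuum-adaptedness of the integrands and on the projection identities above; this is where most of the effort lies. Second, there are genuine analytic points: one must justify convergence of the quantum stochastic integrals and the interchange of the limit $t \to \infty$ with integration. I expect the cleanest route to the latter is to establish the identity first for discrete $S$, where the integrands are step functions and $X_{\wc{S}}$ is a finite sum, and then to pass to an arbitrary quantum stopping time by choosing discrete $S_n \qsto S$ and invoking the continuity of the stopping maps (Remark~\ref{rem:qsconv}) together with the continuity of quantum stochastic integration in its coefficients, following Coquio's strategy.
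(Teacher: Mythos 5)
You follow what is essentially the paper's own route: everything rests on the gauge-integral representation of $( E_{S \wedge t} )_{t \in \R_+}$ from Theorem~\ref{thm:qstint}, the It\^o product formula of Theorem~\ref{thm:qipf}, and Proposition~\ref{prp:Sdecomp} to reduce the coefficients. Your explicit reduction of the FV integrand to ${}^{\wc{S}} R_s$ is correct and is precisely the step the paper leaves implicit in its closing ``now gives the result'': the needed fact is that $E_S$ commutes with $S\bigl( [ 0, s ] \bigr)$, which follows from Proposition~\ref{prp:Sdecomp} because multiplying that identity by $S\bigl( [ 0, s ] \bigr)$ on the left and on the right (and taking an adjoint) gives $E_S S\bigl( [ 0, s ] \bigr) = S\bigl( [ 0, s ] \bigr) E_S S\bigl( [ 0, s ] \bigr)$, a self-adjoint operator. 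The organizational difference is that the paper does not split $X = M + Y$: it takes the single pure-gauge regular $\Omega$-semimartingale $X'_t = \int_0^t I_\mul \otimes S\bigl( [ 0, s ] \bigr) E_s \std \Lambda_s$ and expands $X - X'X - XX' + X'XX' = ( I - X' ) X ( I - X' )$, which by vacuum adaptedness of the integrands equals $E_{S \wedge \cdot} X E_{S \wedge \cdot}$, so all four stopped integrands are read off in one pass. That complement trick is not mere elegance, and here is the one genuine repair your write-up needs: Theorem~\ref{thm:qipf} applies to processes of the form $\int N \std \Lambda + \cdots$, hence vanishing at $t = 0$, whereas $E_{S \wedge t} = E_0 + \int_0^t I_\mul \otimes S\bigl( ( s, \infty ] \bigr) E_s \std \Lambda_s$ has initial value $E_0 \neq 0$, so ``apply the product formula to the triple product $E_{S \wedge t} M_t E_{S \wedge t}$'' is not licensed as written. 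You must either expand $( E_0 + G_t ) M_t ( E_0 + G_t )$ with $G_t := E_{S \wedge t} - E_0$ and keep the cross terms --- $E_0 M_t G_t$ and $G_t M_t E_0$ do \emph{not} vanish; they contribute exactly the $E_0$ component of $E_S$ in ${}^{\wc{S}} P_s$ and ${}^{\wc{S}} Q_s$, so dropping them yields $E_S - E_0$ in place of $E_S$ --- or pass to the complement as the paper does. With that repair your two-part argument closes, and your discrete-approximation fallback becomes unnecessary: the finite-$t$ identities are exact, so one only needs strong convergence of both sides as $t \to \infty$, which follows from uniform boundedness and $E_{S \wedge t} \to E_S$.
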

\begin{proof}
If $X = \int N \std \Lambda + P \std A + Q \std A^\dagger + R \std t$
and $X' = \int N' \std \Lambda$ are regular $\Omega$-semimartingales
then Theorem~\ref{thm:qipf} implies that
\begin{align*}
X X' & = \int N N' \std \Lambda + P N' \std A + %
Q X' \std A^\dagger + R X' \std t \\[1ex]
X' X & = \int N' N \std \Lambda + X' P \std A + %
N' Q \std A^\dagger + X' R \std t \\[1ex]
\text{and} \quad X' X X' & = %
\int N' N N' \std \Lambda + X' P N' \std A + %
N' Q X' \std A^\dagger + X' R X' \std t,
\end{align*}
so
\begin{align*}
X' & X X' - X' X - X X' + X \\[1ex] 
 & = \int ( N' N N' - N' N - N N' + N ) \std \Lambda + %
( X' P N' - X' P - P N' + P ) \std A \\
 & \qquad + ( N' Q X' - N' Q - Q X' + Q ) \std A^\dagger + %
( X' R X' - X' R - R X' + R ) \std t \\[1ex]
 & = \int ( I_{\mul \otimes \fock} - N' ) N %
( I_{\mul \otimes \fock} - N' ) \std \Lambda + %
( I - X' ) P ( I_{\mul \otimes \fock} - N' ) \std A \\
 & \hspace{7em} + %
( I_{\mul \otimes \fock} - N' ) Q ( I - X' ) \std A^\dagger + %
( I - X' ) R ( I - X' ) \std t.
\end{align*}
Taking
\[
X'_t = I - E_{S \wedge t} = %
\int_0^t I_\mul \otimes S\bigl( [ 0, s ] \bigr) E_s \std \Lambda_s %
\qquad \text{for all } t \in [ 0, \infty ]
\]
now gives the result, since
\begin{align*}
& E_S X_\infty E_S \\[1ex]
 & = ( X' X X' - X' X - X X' + X )_\infty \\[1ex]
 & = \int_0^\infty ( I_\mul \otimes S\bigl( ( s, \infty ] \bigr) ) %
N_s ( I_\mul \otimes S\bigl( ( s, \infty ] \bigr) ) \std \Lambda_s + %
E_S P_s ( I_\mul \otimes S\bigl( ( s, \infty ] \bigr) ) \std A_s \\
 & \hspace{12em} + ( I_\mul \otimes S\bigl( ( s, \infty ] \bigr) ) %
Q_s E_S \std A^\dagger_s + E_S R_s E_S \std s.
\qedhere
\end{align*}
\end{proof}

\section{Stopping the future projection}\label{sec:postS}

\begin{definition}
Given $t \in \R_+$, let the isometric right shift
\[
\theta_t : \elltwo \to \elltwo; \ %
( \theta_t f )( s ) := 1_{[ t, \infty )}( s ) f( s - t ),
\]
and let $\Gamma_t \in \bop{\fock}{}$ be its second
quantisation, so that
$\Gamma_t^* \evec{f} = \evecc\bigl( f( \cdot + t ) \bigr)$ for
all~$f \in \elltwo$. If $F_t := \Gamma_t \Gamma_t^*$ then
\[
F_t \evec{f} = \evec{1_{[ t, \infty )} f} %
\qquad \text{for all } f \in \elltwo
\]
and $F$ is an identity-adapted martingale closed by $F_\infty := E_0$.

Given $s$, $t \in [ 0, \infty ]$ with $s \le t$, note that
\[
E_s \Gamma_t = E_0 = \Gamma_t E_s \quad \text{and} \quad
E_s F_t = E_0 = F_t E_s.
\]
(The operator $F_t$ is denoted by $R_t$ in~\cite[Section~4]{Coq06} and
by~$\Gamma( \chi_{[ t, \infty )} )$ in~\cite{PaS87}.)
\end{definition}

\begin{remark}
If the discrete quantum stopping time $T$ has
$\{ t_1 < \cdots < t_n \}$ and~$T_i := T\bigl( \{ t_i \} \bigr)$
for $i = 1, \ldots, n$ then
\[
\Gamma_{\wh{T}} = \sum_{i, j}^n T_i \wh{\pi}( E_{t_i} %
\Gamma_{t_i \vee t_j} E_{t_j} )_{t_i \vee t_j} T_j = %
\sum_{i, j = 1}^n T_i \wh{\pi}( E_0 )_{t_i \vee t_j} T_j = %
F_T F_T^*,
\]
where $F_T := \int_{[ 0, \infty ]} T( \rd s ) \, F_s$; note that
$F_{s \vee t} = F_s F_t$ for all $s$, $t \in [ 0, \infty ]$.

As $\Gamma$ is not a semimartingale, it is unclear how to extend this
identity to hold for more general stopping times; Parthasarathy and
Sinha introduced a left-stopped form of $\Gamma$ in
\cite[Section~5]{PaS87}; see also \cite[Theorem~3.8]{BeS14}.

However, similar working shows that
\[
F_{\wh{T}} = \sum_{i, j = 1}^n T_i \wh{\pi}( E_{t_i} %
F_{t_i \vee t_j} E_{t_j} )_{t_i \vee t_j} T_j = F_T F_T^*.
\]
The operators $F_T$ and $F_T^*$ are obtained by left and right
stopping the process~$F$ at the quantum stopping time~$T$, so it is
not surprising that~$F_T F_T^*$ corresponds to double
stopping~$F$. The following results shows that this intuition holds
true in general.
\end{remark}

\begin{lemma}\label{lem:Fint}
The identity-adapted martingale $F$ is such that
\[
F_t = E_0 + \int_t^\infty I_\mul \otimes F_s \std \Lambda_s = %
I - \int_0^t I_\mul \otimes F_s \std \Lambda_s %
\qquad \text{for all } t \in [ 0, \infty ].
\]
\end{lemma}
\begin{proof}
Taking $M = E_0$ in Lemma~\ref{lem:Mint} gives the first
identity. From this it follows that
\[
I = F_0 = E_0 + \int_0^\infty I_\mul \otimes F_s \std \Lambda_s = %
E_0 + \int_0^t I_\mul \otimes F_s \std \Lambda_s + %
\int_t^\infty I_\mul \otimes F_s \std \Lambda_s,
\]
and rearranging this gives the second.
\end{proof}

\begin{theorem}\label{thm:FSint}
Given an arbitrary quantum stopping time $S$, there exist a
contraction $F_S \in \bop{\fock}{}$ such that
\[
F_S = S\bigl( \{ 0 \} \bigr) + \stlim_\pi %
\sum_{j = 1}^\infty S\bigl( ( \pi_{j - 1}, \pi_j ] \bigr) F_{\pi_j} %
+ S\bigl( \{ \infty \} \bigr) E_0,
\]
where the limit is taken over partitions of $\R_+$, ordered by
refinement. This operator $F_S$ is such that
\[
\langle F_S x, F_S y \rangle = \int_{[ 0, \infty ]} %
\langle x, F_s y \rangle \, \expn_\Vac\bigl[ S( \rd s ) \bigr] %
\qquad \text{for all } x, y \in \fock
\]
and
\[
F_S = E_0 + \int_0^\infty %
I_\mul \otimes S\bigl( [ 0, s ] \bigr) F_s \std \Lambda_s = %
I - \int_0^\infty %
I_\mul \otimes S\bigl( ( s, \infty ] \bigr) F_s \std \Lambda_s.
\]
\end{theorem}
\begin{proof}
Let
$F_S^\pi := S\bigl( \{ 0 \} \bigr) + %
\sum_{j = 1}^\infty S\bigl( ( \pi_{j - 1}, \pi_j ] \bigr) F_{\pi_j} %
+ S\bigl( \{ \infty \} \bigr) E_0$
and note that
\[
\| ( X \otimes I_{[t} ) F_t x \| = \| X \evec{0} \| \, \| F_t x \|
\]
for all $t \in ( 0, \infty )$, $X \in \bop{\fock_{t)}}{}$ and
$x \in \fock$, so $F_S^\pi$ converges in the strong operator topology
and~$\| F_S^\pi \| \le 1$. Let $\pi'$ be a refinement of $\pi$, let
$k_j$ and $l_j$ have the same meaning as they do in the proof of
Theorem~\ref{thm:expS}. Given~$f \in \elltwo$,
\begin{align*}
\| ( F_S^\pi - F_S^{\pi'} ) & \evec{f} \|^2 \\[1ex]
 & = \sum_{j = 0}^\infty \sum_{l = 1}^{l_j} %
\| S\bigl( ( \pi'_{k_j + l - 1}, \pi'_{k_j + l} ] \bigr) %
( F_{\pi'_{k_j + l_j}} - F_{\pi'_{k_j + l}} ) \evec{f} \|^2 \\[1ex]
 & = \sum_{j = 0}^\infty \sum_{l = 1}^{l_j} %
\| S\bigl( ( \pi'_{k_j + l - 1}, \pi'_{k_j + l} ] \bigr) %
\evec{0} \|^2 \, %
\| ( F_{\pi'_{k_j + l_j}} - F_{\pi'_{k_j + l}} ) \evec{f} \|^2.
\end{align*}
If $s$, $t \in \R_+$ are such that $s < t$ then
\begin{equation}\label{eqn:Fcty}
\| ( F_t - F_s ) \evec{f} \| = %
\| \evec{f|_{[ s, t )}} - \evec{0|_{[ s, t )}} \| \, %
\| \evec{f|_{[ t, \infty )}} \|
\end{equation}
and therefore
\[
\| ( F_S^\pi - F_S^{\pi'} ) \evec{f} \|^2 \le \| \evec{f} \|^2 \, 
\sup\{ \exp\bigl( \| 1_{[ \pi_j, \pi_{j + 1} )} f \|^2 \bigr) - 1 : %
j \ge 0 \},
\]
which tends to zero as $\pi$ is refined. Hence
$F_S \evec{f} := \lim_\pi F_S^\pi \evec{f}$ exists for all
$f \in \elltwo$; as exponential vectors are total in $\fock$ and each
$F_S^\pi$ is a contraction, the operator $F_S$ exists as claimed.

By (\ref{eqn:Fcty}), the map $s \mapsto F_s x$ is continuous for all
$x \in \fock$ and therefore
\begin{align*}
\langle F_S x, F_S y \rangle & = %
\lim_\pi \langle F_S^\pi x, F_S^\pi y \rangle \\[1ex]
 & = \langle x, S\bigl( \{ 0 \} \bigr) y \rangle + %
\lim_\pi \sum_{j = 1}^\infty \langle \evec{0}, %
S\bigl( ( \pi_{j - 1}, \pi_j ] \bigr) \evec{0} \rangle \, 
\langle x, F_{\pi_j} y \rangle \\
 & \hspace{18em} + %
\langle E_0 x, S\bigl( \{ \infty \} \bigr) E_0 y \rangle \\[1ex]
 & = \int_{[ 0, \infty ]} \langle x, F_s y \rangle \, %
\expn_\Vac\bigl[ S( \rd s ) \bigr]
\end{align*}
for all $x$, $y \in \fock$.

For the final claim, without loss of generality
suppose~$S\bigl( \{ 0 \} \big) = 0$, and let $f$, $g \in \elltwo$,
with $g$ having support in $[ 0, \pi_n ]$. Then 
$F_t \evec{g} = E_0 \evec{g}$ whenever~$t \ge \pi_n$, so it follows
that
\begin{align*}
\langle \evec{f}, ( F_S^\pi - E_0 ) \evec{g} \rangle & = %
\sum_{j = 1}^\infty \langle \evec{f}, %
S\bigl( ( \pi_{j - 1}, \pi_j ] \bigr) %
( F_{\pi_j} - E_0 ) \evec{g} \rangle \\[1ex]
 & = \sum_{j = 1}^{n - 1} \sum_{k = j}^{n - 1} \langle \evec{f}, %
S\bigl( ( \pi_{j - 1}, \pi_j ] \bigr) %
( F_{\pi_k} - F_{\pi_{k + 1}} ) \evec{g} \rangle \\[1ex]
 & = \sum_{k = 1}^{n - 1} \langle \evec{f}, %
\int_{\pi_k}^{\pi_{k + 1}} I_\mul \otimes %
S\bigl( [ 0, \pi_k ] \bigr) F_s \std \Lambda_s \, \evec{g} \rangle,
\end{align*}
where the final equality follows from by Lemma~\ref{lem:Fint} and
Lemma~\ref{lem:qsi}. The first identity is now seen to hold, since
\[
R_\pi' := \int_0^\infty \sum_{k = 0}^\infty %
1_{( \pi_k, \pi_{k + 1} ]}( s ) I_\mul \otimes %
S\bigl( ( \pi_k, s ] \bigr) F_s \std \Lambda_s \, \evec{g} \to 0
\]
as $\pi$ is refined, by Lemma~\ref{lem:qsiestimate}, working as in the
proof of Theorem~\ref{thm:qstint}. The second identity is now a
consequence of Lemma~\ref{lem:Fint}.
\end{proof}

\begin{theorem}
Let $S$ be a quantum stopping time. Then
\[
F_S F_S^* = E_0 + \int_0^\infty I_\mul \otimes %
S\bigl( [ 0, s ] \bigr) F_s S\bigl( [ 0, s ] \bigr) \std \Lambda_s %
= F_{\wh{S}}.
\]
\end{theorem}
\begin{proof}
The second identity is immediate from (\ref{eqn:idstopop}) and the
fact that $F$ is an identity-adapted martingale closed by
$E_0 = F_{\wc{S}}$. For the first, note first that~$F_S E_0 = E_0$
and, working as in the proof of Theorem~\ref{thm:FSint},
\[
\int_0^t I_\mul \otimes S\bigl( [ 0, s ] \bigr) F_s \std \Lambda_s = %
S\bigl( [ 0, t ] \bigr) ( F_S - F_t ) %
\qquad \text{for all } t \in [ 0, \infty ].
\]
In particular, $S\bigl( [ 0, t ] \bigr) ( F_S - F_t ) F_t = 0$ and
therefore
\begin{align*}
F_S F_S^* - E_0 & = ( F_S - E_0 ) ( F_S - E_0 )^* \\[1ex]
 & = \int_0^\infty I_\mul \otimes S\bigl( [ 0, s ] \bigr) F_s %
\std \Lambda_s \int_0^\infty I_\mul \otimes F_s %
S\bigl( [ 0, s ] \bigr) \std \Lambda_s \\[1ex]
 & = \int_0^\infty I_\mul \otimes \Bigl( %
S\bigl( [ 0, s ] \bigr) ( F_S - F_s ) F_s S\bigl( [ 0, s ] \bigr) \\
 & \hspace{3em} + %
S\bigl( [ 0, s ] \bigr) F_s ( F_S^* - F_s ) S\bigl( [ 0, s ] \bigr) %
 + S\bigl( [ 0, s ] \bigr) F_s S\bigl( [ 0, s ] \bigr) \Bigr) %
\std \Lambda_s \\
 & = \int_0^\infty I_\mul \otimes %
S\bigl( [ 0, s ] \bigr) F_s S\bigl( [ 0, s ] \bigr) \std \Lambda_s.
\qedhere
\end{align*}
\end{proof}

\appendix

\section{Quantum stochastic calculus}\label{sec:qsc}

\begin{definition}\label{def:vacrsm}
Let $\mul_1$ and $\mul_2$ be non-zero subspaces of $\mul$. A
\emph{$\mul_1-\mul_2$ process} $X$ is a family of bounded operators
$( X_t )_{t \in \R_+} \subseteq %
\bop{\mul_1 \otimes \fock}{\mul_2 \otimes \fock}$
such that $t \mapsto X_t \evec{f}$ is strongly measurable for all
$f \in \elltwo$.

A $\mul_1-\mul_2$ process $X$ is \emph{vacuum adapted} if
$( I_{\mul_2} \otimes E_t ) X_t ( I_{\mul_1} \otimes E_t ) = X_t$ for
all $t \in \R_+$.

Let $N$, $P$, $Q$ and $R$ be vacuum adapted $\mul-\mul$, $\mul-\C$,
$\C-\mul$ and $\C-\C$ processes, respectively, such that
\[
\| N \|_\infty := \esssup\{ \| N_t \| : t \in \R_+ \} < \infty %
\qquad \text{and} \qquad %
\int_0^\infty %
\bigl( \| P_s \|^2 + \| Q_s \|^2 + \| R_s \| \bigr) \std s < \infty.
\]
The \emph{gauge integral}
$\int N \std \Lambda = ( \int_0^t N_s \std \Lambda_s )$,
\emph{annihilation integral}
$\int P \std A = ( \int_0^t P_s \std A_s )$,
\emph{creation integral}
$\int Q \std A^\dagger = ( \int_0^t Q_s \std A^\dagger_s )$
and \emph{time integral}
$\int R \std t = ( \int_0^t R_s \std s )$
are the unique vacuum-adapted $\C-\C$~processes such that, for all
$f$, $g \in \elltwo$ and $t \in [ 0, \infty ]$,
\begin{align}\label{eqn:firstintid}
\langle \evec{f}, %
\int_0^t N_s \std \Lambda_s \, \evec{g} \rangle & = %
\int_0^t \langle f( s ) \otimes \evec{f}, N_s \bigl( %
g( s ) \otimes \evec{g} \bigr) \rangle \std s \\[1ex]
\langle \evec{f}, \int_0^t P_s \std A_s \, \evec{g} \rangle & = %
\int_0^t \langle \evec{f}, %
P_s \bigl( g( s ) \otimes \evec{g} \bigr) \rangle \std s \\[1ex]
\langle \evec{f}, %
\int_0^t Q_s \std A^\dagger_s \, \evec{g} \rangle & = %
\int_0^t \langle f( s ) \otimes \evec{f}, %
Q_s \evec{g} \rangle \std s \\[1ex]
\label{eqn:lastintid} \text{and} \qquad %
\langle \evec{f}, \int_0^t R_s \std s \, \evec{g} \rangle & = %
\int_0^t \langle \evec{f}, R_s \evec{g} \rangle \std s.
\end{align}

A vacuum-adapted $\C-\C$ process $X$ is a
\emph{regular $\Omega$-semimartingale} if there exist vacuum-adapted
processes $N$, $P$, $Q$, and $R$ as above and such that
\[
X_t = \int_0^t N_s \std \Lambda_s + \int_0^t P_s \std A_s + %
\int_0^t Q_s \std A^\dagger_s + \int_0^t R_s \std s %
\qquad \text{for all } t \in [ 0, \infty ];
\]
we write
$X = \int N \std \Lambda + P \std A + Q \std A^\dagger + R \std t$ to
denote this.

The \emph{martingale part} of $X$ is the vacuum-adapted
martingale $M = \int N \std \Lambda + P \std A + Q \std A^\dagger$
which is closed by $M_\infty$, where
\[
M_t = \int_0^t N_s \std \Lambda_s + \int_0^t P_s \std A_s + %
\int_0^t Q_s \std A^\dagger_s \qquad \text{for all } %
t \in [ 0, \infty ],
\]
and the \emph{FV part} of $X$ is the vacuum-adapted FV process
$Y = \int R \std t$ which is closed by
\[
Y_\infty = \int_0^\infty R_s \std s.
\]
\end{definition}

\begin{theorem}\label{thm:qipf}
Given regular $\Omega$-semimartingales
$X = \int N \std \Lambda + P \std A + Q \std A^\dagger + R \std t$
and~$X' = %
\int N' \std \Lambda + P' \std A + Q' \std A^\dagger + R' \std t$,
the process
$X X' = ( X_t X'_t )_{t \in [ 0, \infty ]}$ is a regular
$\Omega$-semimartingale, with
\[
X X' = \int N N' \std \Lambda + ( X P' + P N' ) \std A + %
( Q X' + N Q' ) \std A^\dagger + ( X R'+ R X' + P Q' ) \std t.
\]
\end{theorem}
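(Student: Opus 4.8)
The plan is to verify that the four integrands proposed for $X X'$ are admissible in the sense of Definition~\ref{def:vacrsm}, and then to confirm the claimed identity by testing both sides against exponential vectors and comparing their derivatives in the time parameter. First I would record that $X$ and $X'$ are uniformly bounded processes: each is the sum of a closed martingale and a closed FV process, so $X_t \to X_\infty$ and $X'_t \to X'_\infty$ strongly as $t \to \infty$; since $t \mapsto X_t x$ is then bounded for each $x \in \fock$, the uniform-boundedness principle gives $\| X \|_\infty := \esssup\{ \| X_t \| : t \in \R_+ \} < \infty$ and likewise for $X'$. With this in hand the admissibility of the integrands is routine. Each of $N N'$, $X P' + P N'$, $Q X' + N Q'$ and $X R' + R X' + P Q'$ is vacuum adapted, because $X_s = E_s X_s E_s$ and $X'_s = E_s X'_s E_s$ together with the vacuum-adaptedness of $N$, $P$, $Q$, $R$ (and the primed processes) force the sandwiching projections through each product. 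For the integrability conditions, $\| N N' \|_\infty \le \| N \|_\infty \| N' \|_\infty < \infty$; the $\std A$- and $\std A^\dagger$-integrands are square-integrable since $\| X \|_\infty$ and $\| N \|_\infty$ are finite; and the $\std t$-integrand is integrable, the cross term $P Q'$ being handled by Cauchy--Schwarz applied to the square-integrable families $P$ and $Q'$. Hence the right-hand side defines a regular $\Omega$-semimartingale, say $\tilde X$.

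Next I would reduce the equality $X X' = \tilde X$ to a statement about matrix elements. Both $( X_t X'_t )_{t \in \R_+}$ and $\tilde X$ are vacuum-adapted $\C-\C$ processes, so are determined by the matrix elements $\langle \evec{f}, \cdot\, \evec{g} \rangle$ for $f$, $g \in \elltwo$; it therefore suffices to show that $\langle \evec{f}, X_t X'_t \evec{g} \rangle = \langle \evec{f}, \tilde X_t \evec{g} \rangle$ for all such $f$, $g$ and all $t \ge 0$. Both sides vanish at $t = 0$, since $X_0 = X'_0 = 0$, and the right-hand side is absolutely continuous in $t$ with derivative read off directly from the defining relations (\ref{eqn:firstintid})--(\ref{eqn:lastintid}). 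It thus remains to differentiate $t \mapsto \langle \evec{f}, X_t X'_t \evec{g} \rangle = \langle X_t^* \evec{f}, X'_t \evec{g} \rangle$, which is the inner product of two vector-valued quantum stochastic integrals.

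The heart of the proof is this derivative. It splits into a contribution in which only the left factor is differentiated, one in which only the right factor is differentiated, and an Itô cross term coming from the mutual quadratic variation of the two integrals. By the defining relations and vacuum-adaptedness, the left increment contributes only the creation and time integrands $Q X'$ and $R X'$, the right increment only the annihilation and time integrands $X P'$ and $X R'$, while the cross term, governed by the vacuum-adapted Itô table $\std \Lambda\, \std \Lambda = \std \Lambda$, $\std A\, \std \Lambda = \std A$, $\std \Lambda\, \std A^\dagger = \std A^\dagger$ and $\std A\, \std A^\dagger = \std t$, supplies $N N'$, $P N'$, $N Q'$ and $P Q'$ respectively. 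The first-order gauge terms $N X'$ and $X N'$, together with the wrong-sided first-order annihilation and creation terms, vanish because the relevant vacuum projections annihilate them; this is precisely the simplification afforded by the $\Omega$-formalism. Summing the surviving terms reproduces the integrands $N N'$, $X P' + P N'$, $Q X' + N Q'$ and $X R' + R X' + P Q'$.

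The main obstacle is making the cross term rigorous, since $X_t^* \evec{f}$ and $X'_t \evec{g}$ are not exponential vectors while the defining relations apply only to these. I would handle this by polarising the weak quantum Itô product formula, Theorem~\ref{thm:weakqipf}, which furnishes the $\| \cdot \|^2$-identities for gauge integrals, and combining it with the defining relations for the annihilation, creation and time integrals; the factorisation $\fock = \fock_{t)} \otimes \fock_{[t}$ and the vacuum-adaptedness of all integrands ensure that increments over $( t, \infty ]$ pair correctly against $E_t$-supported vectors, which is what produces both the surviving covariations and the vanishing of the spurious first-order terms. A convenient bookkeeping device for organising the sixteen products is bilinearity: it suffices to treat each pairing of a single-type integral for $X$ with a single-type integral for $X'$, every such pairing being an immediate instance of (\ref{eqn:firstintid})--(\ref{eqn:lastintid}) and Theorem~\ref{thm:weakqipf}.
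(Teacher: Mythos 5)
First, a framing point: the paper never proves Theorem~\ref{thm:qipf}. It is stated in the appendix as imported background --- the product formula for the vacuum-adapted calculus established in \cite{Blt01} and \cite{Blt04} --- so your proposal must stand alone. Its skeleton (admissibility of the four integrands, reduction to matrix elements on $\evecs$, differentiation of $t \mapsto \langle X_t^* \evec{f}, X'_t \evec{g} \rangle$) is the standard route, and your bookkeeping of which covariations survive and which first-order terms drop is exactly right. However, two steps have genuine gaps. The lesser one is uniform boundedness: strong convergence $X_t \to X_\infty$ only bounds $\{ X_t x : t \ge T \}$ for $T$ large, and to bound $t \mapsto \| X_t x \|$ on $[ 0, T ]$ for \emph{every} $x \in \fock$ you would need strong continuity of $t \mapsto X_t x$ for arbitrary $x$, which is itself normally deduced from locally uniform norm bounds --- the very thing being proved; pointwise boundedness on the dense subspace $\evecs$ is not enough for the uniform-boundedness principle. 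The non-circular route is the adapted-gradient identity in the remark after Theorem~\ref{thm:qstint}: $\| x \|^2 = \| E_0 x \|^2 + \int_0^\infty \| D_s x \|^2 \std s$ together with vacuum-adaptedness gives $\langle \evec{f}, \int_0^t N_s \std \Lambda_s \, \evec{g} \rangle = \int_0^t \langle D_s \evec{f}, N_s D_s \evec{g} \rangle \std s$, hence $\| \int_0^t N_s \std \Lambda_s \| \le \| N \|_\infty$, and similarly for the other three integrals, so that $\sup_t \| X_t \| < \infty$ comes for free.

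The serious gap is the It\^o step itself. As you note, the relations (\ref{eqn:firstintid}--\ref{eqn:lastintid}) cannot be applied to $\langle X_t^* \evec{f}, X'_t \evec{g} \rangle$ because these vectors are not exponential; but Theorem~\ref{thm:weakqipf} does not repair this. It treats exactly one of your sixteen pairings (gauge against gauge), and it is stated for \emph{identity-adapted} integrands, while your $N$ and $N'$ are vacuum adapted, so strictly it does not even cover that one. No amount of polarisation of a gauge--gauge identity yields the annihilation--creation covariation (the source of the $P Q' \std t$ term), the annihilation--gauge and gauge--creation covariations ($P N'$ and $N Q'$), or the vanishing of the spurious terms $N ( I_\mul \otimes X' )$, $( I_\mul \otimes X ) N'$, $P ( I_\mul \otimes X' )$ and $( I_\mul \otimes X ) Q'$. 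Those mixed weak It\^o formulas, in vacuum-adapted form, are precisely the substance of the theorem (and of \cite{Blt01}); they appear nowhere in the paper and are not ``immediate instances'' of anything stated in it. If you want a proof using only the paper's toolkit, the more promising route is: transfer to the identity-adapted side by Theorem~\ref{thm:switch}, use that $\wh{\pi}$ is an algebra homomorphism, apply the Hudson--Parthasarathy product formula there, and verify that the gauge corrections $- I_\mul \otimes X$ and $- I_\mul \otimes X'$ cancel precisely the unwanted first-order terms, leaving the stated integrands after switching back --- though the identity-adapted product formula is likewise not in the paper, so some citation to the literature is unavoidable either way.
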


\begin{definition}
A $\mul_1-\mul_2$ process $X$ is \emph{identity adapted} if and only
if
\[
\langle \evec{f}, X_t \evec{g} \rangle = %
\langle \evec{1_{[ 0, t )} f}, X_t \evec{1_{[ 0, t )} g} \rangle \, %
\langle \evec{1_{[ t, \infty )} f}, \evec{1_{[ t, \infty )} g} \rangle
\]
for all $f$, $g \in \elltwo$ and $t \in \R_+$; equivalently,
$X_t = X_{t)} \otimes I_{[t}$, where $X_{t)} \in \bop{\fock_{t)}}{}$,
for all~$t \in \R_+$.

Given a quadruple of processes $N$, $P$, $Q$ and $R$ as in
Definition~\ref{def:vacrsm}, but identity adapted rather than vacuum
adapted, there exist identity-adapted gauge, annihilation, creation
and time integrals which satisfy the same inner-product
identities. However, integration may not preserve boundedness in this
case, other than for the time integral: the integrals will exist as
linear operators with domains containing $\evecs$ such that the
identities (\ref{eqn:firstintid}--\ref{eqn:lastintid}) hold. If
\[
M_t = \int_0^t N_s \std \Lambda_s + \int_0^t P_s \std A_s + %
\int_0^t Q_s \std A^\dagger_s
\]
is a bounded operator for all $t \in [ 0, \infty ]$ then the
identity-adapted $\C-\C$ process $X = M + Y$,
where~$Y = \int H \std t$, is a \emph{regular quantum semimartingale}.
\end{definition}

A weak form of It\^o product formula holds for quantum semimartingales
which are not necessarily regular. We only need the version for gauge
integrals, which is as follows.

\begin{theorem}\label{thm:weakqipf}
Let $N$ and $N'$ be identity-adapted $\mul-\mul$ processes such that
$\| N \|_\infty < \infty$ and~$\| N' \|_\infty < \infty$. If
$X_t = \int_0^t N_s \std \Lambda_s$ and 
$X'_t = \int_0^t N'_s \std \Lambda_s$ then
\[
\langle X_t x, X_t' x' \rangle = \!\int_0^t \bigl( %
\langle ( I_\mul \otimes X_s ) \nabla_s x, %
N'_s \nabla_s x' \rangle + %
\langle N_s \nabla_s x, %
( I_\mul \otimes X'_s ) \nabla_s x' \rangle + %
\langle N_s \nabla_s x, N'_s \nabla_s x' \rangle \bigr) %
\std s
\]
for all $t \in [ 0, \infty ]$ and $x$, $x' \in \fock$, where
\[
\nabla : \evecs \to L^2( \R_+; \mul \otimes \fock ); \ %
\big( \nabla \evec{f} \bigr)( t ) = \nabla_t \evec{f} := %
f( t ) \evec{f}
\]
is the linear gradient operator.
\end{theorem}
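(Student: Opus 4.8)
The plan is to prove the identity first for exponential vectors $x = \evec{f}$ and $x' = \evec{g}$, since $\evecs$ is the natural domain on which the (possibly unbounded, identity-adapted) integrals $X_t$, $X'_t$ and the gradient $\nabla$ are all defined; both sides are sesquilinear in $(x, x')$, so the general statement follows by extension over $\evecs$. I would fix a partition $0 = t_0 < t_1 < \cdots < t_n = t$, write $\Delta_k := \int_{t_{k-1}}^{t_k} N_s \std \Lambda_s$ and $\Delta'_l := \int_{t_{l-1}}^{t_l} N'_s \std \Lambda_s$, so that
\[
\langle X_t \evec{f}, X'_t \evec{g} \rangle = \sum_{k, l = 1}^n \langle \Delta_k \evec{f}, \Delta'_l \evec{g} \rangle,
\]
and split the double sum according to $l < k$, $l > k$ and $l = k$. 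The strategy is to show that the two off-diagonal sums converge, as the mesh tends to zero, to the first two integrals in the statement, while the diagonal sum produces the It\^o correction $\int_0^t \langle N_s \nabla_s \evec{f}, N'_s \nabla_s \evec{g} \rangle \std s$; here the identity \tu{(\ref{eqn:firstintid})}, which also holds for identity-adapted gauge integrals on $\evecs$, is the basic tool.

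For the off-diagonal blocks I would argue by factorisation. When $l < k$ the operator $\Delta'_l$ is identity adapted at $t_l$, so $\Delta'_l \evec{g} = w \otimes \evec{1_{[ t_l, \infty )} g}$ with $w \in \fock_{t_l)}$; approximating $w$ in norm by finite combinations $\sum_i c_i \evec{k_i}$ with each $k_i$ supported in $[ 0, t_l )$ writes $\Delta'_l \evec{g}$ as a limit of combinations of exponential vectors $\evec{h_i}$, where $h_i := k_i + 1_{[ t_l, \infty )} g$ satisfies $\nabla_r \evec{h_i} = g( r ) \otimes \evec{h_i}$ for $r \ge t_l$. Applying \tu{(\ref{eqn:firstintid})} to $\Delta_k$ against each $\evec{h_i}$ and passing to the limit gives
\[
\langle \Delta_k \evec{f}, \Delta'_l \evec{g} \rangle = \int_{t_{k-1}}^{t_k} \langle N_r \nabla_r \evec{f}, ( I_\mul \otimes \Delta'_l ) \nabla_r \evec{g} \rangle \std r.
\]
Summing over $l < k$ replaces $\Delta'_l$ by $X'_{t_{k-1}}$, and summing over $k$ produces a Riemann sum for $\int_0^t \langle N_r \nabla_r \evec{f}, ( I_\mul \otimes X'_r ) \nabla_r \evec{g} \rangle \std r$; its convergence rests on the strong continuity of $r \mapsto X'_r \evec{g}$, which follows from the standard gauge-integral estimate $\| \Delta'_l \evec{g} \|^2 = O( t_l - t_{l-1} )$ together with $\| N' \|_\infty < \infty$. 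The blocks with $l > k$ are treated symmetrically and yield $\int_0^t \langle ( I_\mul \otimes X_r ) \nabla_r \evec{f}, N'_r \nabla_r \evec{g} \rangle \std r$.

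The diagonal is the crux: here $\Delta_k$ and $\Delta'_k$ act on the same window $[ t_{k-1}, t_k ]$, so the factorisation above is unavailable. The key relation I would establish is the commutation identity $\nabla_r ( \Delta_k \evec{f} ) = N_r \nabla_r \evec{f} + ( I_\mul \otimes \Delta_k ) \nabla_r \evec{f}$ for $r \in ( t_{k-1}, t_k )$, that is, that $\Delta_k \evec{f}$ lies in the domain of the closed gradient with this derivative, the first summand being exactly the quantum It\^o term and encoding $\std \Lambda \cdot \std \Lambda = \std \Lambda$. Feeding this into \tu{(\ref{eqn:firstintid})} for $\Delta'_k$ gives
\[
\langle \Delta_k \evec{f}, \Delta'_k \evec{g} \rangle = \int_{t_{k-1}}^{t_k} \langle N_r \nabla_r \evec{f}, N'_r \nabla_r \evec{g} \rangle \std r + \int_{t_{k-1}}^{t_k} \langle ( I_\mul \otimes \Delta_k ) \nabla_r \evec{f}, N'_r \nabla_r \evec{g} \rangle \std r.
\]
The first term sums over the cells to the desired It\^o integral; the second, using $\| \Delta_k \evec{f} \| = O( ( t_k - t_{k-1} )^{1/2} )$ and Cauchy--Schwarz, is bounded by a constant times $( t_k - t_{k-1} )^{3/2}$, so the diagonal remainder is $O( \text{mesh}^{1/2} )$ and vanishes in the limit.

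I expect the main obstacle to be the rigorous justification of this gradient--commutation relation on a single cell, equivalently the extension of the first fundamental formula \tu{(\ref{eqn:firstintid})} to test vectors of the form $\Delta_k \evec{f}$: this is precisely the step that generates the It\^o correction and that must be handled carefully because the integrands are unbounded. Once the diagonal estimate is secured, assembling the three limits yields the stated identity for $x = \evec{f}$ and $x' = \evec{g}$, and extending by sesquilinearity over $\evecs$ completes the argument; continuity in $t$ and the vanishing of both sides at $t = 0$ are immediate from the same norm estimates.
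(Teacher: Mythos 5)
The paper offers no proof of this theorem at all: its ``proof'' is the citation \cite[Theorem~3.15]{Lin05}, so your attempt must be measured against the argument in Lindsay's notes, which is developed within the gradient--divergence (Hitsuda--Skorohod) formulation of quantum stochastic integration. Your overall architecture --- reduction to exponential vectors by sesquilinearity, partition of $[0,t]$, the two off-diagonal triangles, and the It\^o correction from the diagonal --- is sound, and your treatment of the off-diagonal blocks is genuinely rigorous: because $\Delta'_l \evec{g}$ factorises as $w \otimes \evec{1_{[ t_l, \infty )} g}$ with $w \in \fock_{t_l)}$, approximating $w$ by combinations of exponential vectors legitimately reduces everything to \tu{(\ref{eqn:firstintid})}, and the Riemann-sum convergence follows from Lemma~\ref{lem:qsiestimate}. (A minor repair: your $O( t_l - t_{l-1} )$ and $O( \mathrm{mesh}^{1/2} )$ rates implicitly assume $f$ and $g$ bounded; in general one argues instead from the uniform absolute continuity of $\int \| f \|^2$ and $\int \| g \|^2$, and the conclusions survive.)

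The genuine gap is the diagonal, exactly where you flag it, and it is not a routine verification that can be deferred. Within the toolkit the paper provides, the gauge integral of an identity-adapted integrand is \emph{characterised only} by its matrix elements between exponential vectors, namely \tu{(\ref{eqn:firstintid})}; nothing entitles you to pair $\Delta'_k$ against the test vector $\Delta_k \evec{f} \notin \evecs$, and your commutation identity $\nabla_r ( \Delta_k \evec{f} ) = N_r \nabla_r \evec{f} + ( I_\mul \otimes \Delta_k ) \nabla_r \evec{f}$ presupposes both that $\Delta_k \evec{f}$ lies in the domain of the closure of $\nabla$ and that the first fundamental formula extends to such vectors. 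These two facts are precisely the core of the maximal-domain calculus of \cite{AtL04} and \cite{Lin05} --- that is, of the machinery behind the proof being cited --- and once they are available your partition becomes superfluous: one gets directly $\langle X_t \evec{f}, X'_t \evec{g} \rangle = \int_0^t \bigl( \langle ( I_\mul \otimes X_t ) \nabla_s \evec{f}, N'_s \nabla_s \evec{g} \rangle + \langle N_s \nabla_s \evec{f}, N'_s \nabla_s \evec{g} \rangle \bigr) \std s$, and a Fubini exchange over the triangle $\{ r > s \}$ converts this endpoint-frozen form into the stated one. So your proof is not wrong, but its crux is assumed rather than proven. To make the argument self-contained with elementary means, the diagonal should instead be handled in the original Hudson--Parthasarathy manner: establish the cell identity first for simple (piecewise-constant, adapted) integrands, where the gauge increments act explicitly on exponential vectors and the relation encoding $\std \Lambda \, \std \Lambda = \std \Lambda$ can be verified by direct computation, and then pass to general $N$, $N'$ by approximation using Lemma~\ref{lem:qsiestimate}.
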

\begin{proof}
See \cite[Theorem~3.15]{Lin05}.
\end{proof}

\begin{remark}\label{rem:independence}
The decomposition $X = M + Y$ is unique for both regular
$\Omega$-semimartingales and regular quantum semimartingales, by
Lemma~\ref{lem:decomp}. In fact, more may be shown: the quantum
stochastic integrators are independent, in the sense that if
\[
\int_0^\infty N_s \std \Lambda_s + \int_0^\infty P_s \std A_s + %
\int_0^\infty Q_s \std A^\dagger_s + \int_0^\infty R_s \std s = 0
\]
then $N_s = P_s = Q_s = R_s = 0$ for almost all $s \in \R_+$
\cite{Lin91}.
\end{remark}

\begin{lemma}\label{lem:qsi}
Let $t \in \R_+$ and $Z$, $W \in \bop{\fock_{t)}}{} \otimes I_{[t}$.
If $( N_s )_{s \in \R_+}$ is a vacuum or identity-adapted
$\mul-\mul$~process such that $\| N \|_\infty < \infty$ then so is
$\bigl( 1_{[ t, \infty )}( s ) ( I_\mul \otimes Z ) N_s %
( I_\mul \otimes W ) \bigr)_{s \in \R_+}$,
with the same type of adaptedness, and
\[
Z \int_t^\infty N_s \std \Lambda_s \, W = %
\int_t^\infty ( I_\mul \otimes Z ) N_s ( I_\mul \otimes W ) %
\std \Lambda_s.
\]
\end{lemma}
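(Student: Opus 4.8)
The plan is to set $\tilde N_s := 1_{[ t, \infty )}( s )\,( I_\mul \otimes Z )\,N_s\,( I_\mul \otimes W )$, to check that $\tilde N$ is a $\mul$-$\mul$ process of the same adaptedness type as $N$ with $\| \tilde N \|_\infty < \infty$, and then to verify the operator identity $Z \int_t^\infty N_s \std \Lambda_s\,W = \int_t^\infty \tilde N_s \std \Lambda_s$ by comparing matrix elements on the total set $\evecs$. For the first part, $\| \tilde N \|_\infty \le \| Z \|\,\| W \|\,\| N \|_\infty$ is immediate, and strong measurability of $s \mapsto \tilde N_s \evec{f}$ follows from that of $N$ by composing with the fixed bounded operators $I_\mul \otimes Z$ and $I_\mul \otimes W$ and multiplying by the scalar $1_{[ t, \infty )}( s )$. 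The observation used throughout is that, since $Z$, $W \in \bop{\fock_{t)}}{} \otimes I_{[t}$ act trivially after time $t$, they commute with $E_s$ for every $s \ge t$; writing $Z = ( Z_{t)} \otimes I_{[ t, s )} ) \otimes I_{[s}$ shows the same for the factorisation at any such $s$. Hence for $s \ge t$ we have $( I_\mul \otimes E_s ) \tilde N_s ( I_\mul \otimes E_s ) = ( I_\mul \otimes Z )( I_\mul \otimes E_s ) N_s ( I_\mul \otimes E_s )( I_\mul \otimes W ) = \tilde N_s$ in the vacuum-adapted case, while $\tilde N_s$ factorises as $(\,\cdot\,) \otimes I_{[s}$ in the identity-adapted case; for $s < t$ the integrand vanishes.

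For the identity itself, in the vacuum-adapted case both sides are bounded, so it suffices to match $\langle \evec{f}, \cdot\,\evec{g} \rangle$ for all $f$, $g \in \elltwo$. The right-hand side is handled directly by (\ref{eqn:firstintid}): moving $I_\mul \otimes Z$ and $I_\mul \otimes W$ onto the gradient slots gives
\[
\langle \evec{f}, \int_t^\infty \tilde N_s \std \Lambda_s\,\evec{g} \rangle = \int_t^\infty \langle f( s ) \otimes Z^* \evec{f},\, N_s ( g( s ) \otimes W \evec{g} ) \rangle \std s .
\]
The left-hand side I would rewrite as $\langle Z^* \evec{f},\, ( \int_t^\infty N_s \std \Lambda_s ) W \evec{g} \rangle$. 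The hard part is that $Z^* \evec{f}$ and $W \evec{g}$ are \emph{not} exponential vectors, so (\ref{eqn:firstintid}) cannot be applied to the left-hand side as it stands.

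To get around this I would first extend the matrix-element formula for the gauge integral from $\evecs$ to all of $\fock$. Vacuum adaptedness of $N$ lets one replace the gradient $f( s ) \otimes \evec{f}$ in (\ref{eqn:firstintid}) by the adapted gradient $D_s \evec{f} = f( s ) \otimes \evec{1_{[ 0, s )} f}$; since $D$ extends to a bounded map $\fock \to L^2( \R_+; \mul \otimes \fock )$ with $\| D x \| \le \| x \|$ (as in the remark following Theorem~\ref{thm:qstint}), density of $\evecs$ together with the bound $\| N \|_\infty$ yields
\[
\langle x, ( \int_t^\infty N_s \std \Lambda_s ) y \rangle = \int_t^\infty \langle D_s x,\, N_s D_s y \rangle \std s \qquad \text{for all } x,\, y \in \fock .
\]
Applying this with $x = Z^* \evec{f}$ and $y = W \evec{g}$, I would then use the intertwining $D_s( W x ) = ( I_\mul \otimes W ) D_s x$ for $s \ge t$ (and likewise for $Z^*$), which holds because $W$ acts before time $t$ while $D_s$ involves only the structure on $[ 0, s )$ with $s \ge t$; this is checked by approximating $W \evec{g} = ( W_{t)} \evec{1_{[ 0, t )} g} ) \otimes \evec{1_{[ t, \infty )} g}$ by exponential vectors sharing the tail $1_{[ t, \infty )} g$, using continuity of $D$. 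Since $( I_\mul \otimes E_s )( g( s ) \otimes W \evec{g} ) = g( s ) \otimes W E_s \evec{g}$ for $s \ge t$, the two matrix elements coincide, giving the identity.

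The main obstacle is exactly the non-preservation of exponential vectors by $Z$ and $W$: it forces the extension of the gauge-integral matrix-element formula to arbitrary vectors via the bounded adapted gradient $D$, and the verification that $D_s$ intertwines with operators supported before time $t$. The identity-adapted case is dealt with on $\evecs$ by the same computation, replacing the bounded-$D$ argument with the factorisation $N_s = N_{s)} \otimes I_{[s}$ and the corresponding inner-product identities for identity-adapted integrals, the commutation of $Z$, $W$ with $E_s$ for $s \ge t$ being used in the same way.
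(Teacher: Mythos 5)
Your treatment of the vacuum-adapted case is correct, and it takes a genuinely different route from the paper. You extend the matrix-element identity (\ref{eqn:firstintid}) from $\evecs$ to all of $\fock$ by exploiting two pieces of boundedness special to vacuum adaptedness --- the adapted gradient $D$ is a contraction and the vacuum-adapted gauge integral is a bounded operator --- and then conclude via the intertwining $D_s( W x ) = ( I_\mul \otimes W ) D_s x$ for $s \ge t$. The paper instead takes $Z$ to be a Weyl operator associated with a test function $f$ supported in $[ 0, t ]$: such an operator sends each exponential vector to a scalar multiple of an exponential vector, with $Z^* \evec{g}$ proportional to $\evec{g - f}$ and $( g - f )( s ) = g( s )$ for $s > t$, so that (\ref{eqn:firstintid}) applies on the nose and no extension beyond $\evecs$ is ever needed; general $Z$ then follows from strong-operator density of these Weyl operators in $\bop{\fock_{t)}}{} \otimes I_{[t}$, and the right factor $W$ is obtained by taking adjoints. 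In the bounded setting your argument is a perfectly reasonable alternative.

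The gap is the identity-adapted case, which you dispatch in one sentence but which is genuinely needed: the paper applies this lemma with identity-adapted integrands, for instance in Proposition~\ref{prp:Mint} and Lemma~\ref{lem:idclosedmg}. In that case every tool your computation rests on is unavailable. The integral $\int_t^\infty N_s \std \Lambda_s$ need not be bounded and is defined only on $\evecs$; identity adaptedness does not allow you to insert $I_\mul \otimes E_s$ on either side of $N_s$, so the gradient occurring in (\ref{eqn:firstintid}) is the unbounded $\nabla$ rather than the contraction $D$; hence there is no continuity with which to extend the matrix-element formula from $\evecs$ to the vectors $Z^* \evec{f}$ and $W \evec{g}$, and it is not even clear that $W \evec{g}$ lies in the domain of the integral, so the composition $Z \int_t^\infty N_s \std \Lambda_s \, W$ itself requires interpretation. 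The factorisation $N_s = N_{s)} \otimes I_{[s}$ does not address any of this: it describes $N_s$, not how the (possibly unbounded) integral acts on non-exponential vectors, and the per-vector estimate of Lemma~\ref{lem:qsiestimate} (whose constant $C_f$ depends on $f$) does not yield continuity on linear combinations of exponential vectors. This is exactly the obstacle you yourself identified, and overcoming it without boundedness needs a different idea; the paper's Weyl-operator argument, which never leaves $\evecs$ and then invokes density and adjoints, is precisely such an idea and works uniformly in both the vacuum-adapted and identity-adapted cases.
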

\begin{proof}
The first claim is immediate. For the second, suppose first that
$Z = W( f )$, the Weyl operator corresponding to~$f \in \elltwo$, as
in Example~\ref{eg:weyl}. Then $Z^* = W( -f )$ and, if $f$ has support
in~$[ 0, t ]$,
\begin{align*}
\langle \evec{g}, %
Z \int_t^\infty N_s \std & \Lambda_s \, \evec{h} \rangle \\[1ex]
 & = \exp\bigl( -\hlf \| f \|^2 + \langle f, g \rangle \bigr) %
\langle \evec{g - f}, \int_t^\infty N_s \std \Lambda_s \, %
\evec{h} \rangle \\[1ex]
 & = \exp\bigl( -\hlf \| f \|^2 + \langle f, g \rangle \bigr) %
\int_t^\infty \langle ( g - f )( s ) \otimes \evec{g - f}, %
N_s \bigl( h( s ) \otimes \evec{h} \bigr) \rangle \std s \\[1ex]
 & = \int_t^\infty \langle g( s ) \otimes Z^* \evec{g}, %
N_s \bigl( h( s ) \otimes \evec{h} \bigr) \rangle \std s \\[1ex]
 & = \langle \evec{g}, %
\int_t^\infty ( I_\mul \otimes Z ) N_s \std \Lambda_s \, %
\evec{h} \rangle
\end{align*}
for all $g$, $h \in \elltwo$. Since such Weyl operators are dense in
$\bop{\fock_{t)}}{} \otimes I_{[t}$ for the strong operator topology,
the result holds for general $Z$ and $W = I$; the full version may
be obtained by taking adjoints.
\end{proof}

\begin{lemma}\label{lem:qsiestimate}
If $N$ is a vacuum or identity-adapted $\mul-\mul$~process such that
$\| N \|_\infty < \infty$ then
\[
\| \int_0^\infty N_s \std \Lambda_s \, \evec{f} \|^2 \le C_f^2 %
\int_0^\infty \| N_s \bigl( f( s ) \otimes \evec{f} \bigr) \|^2 %
\std s \qquad \text{for all } f \in \elltwo,
\]
where $C_f := 1$ if $N$ is vacuum adapted and
$C_f := \| f \| + \sqrt{1 + \| f \|^2}$ if $N$ is identity adapted.
\end{lemma}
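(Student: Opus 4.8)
The plan is to apply a weak quantum It\^o product formula to the process $X_t := \int_0^t N_s \std \Lambda_s$ with $x = x' = \evec{f}$ and $N = N'$, thereby expressing $\| X_t \evec{f} \|^2$ as a time integral whose ``diagonal'' contribution is exactly the required right-hand side, and then to control the surviving cross terms differently in the two cases. Throughout I set $\psi( t ) := \| X_t \evec{f} \|^2$, $\gamma( s ) := \| N_s ( f( s ) \otimes \evec{f} ) \|$ and $\Gamma( t ) := \int_0^t \gamma( s )^2 \std s$; since $\gamma( s ) \le \| N \|_\infty \, \| f( s ) \| \, \| \evec{f} \|$ we have $\Gamma( \infty ) < \infty$, so $\psi$ is a bounded increasing function and it suffices to prove the bound for each finite $t$ and let $t \to \infty$.

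For the vacuum-adapted case ($C_f = 1$) I would first observe that the increments of $X$ applied to $\evec{f}$ are mutually orthogonal. Indeed, if $t' > t$ and $g \in \elltwo$ then the defining identity for the gauge integral gives
\[
\langle \evec{1_{[ 0, t )} g}, ( X_{t'} - X_t ) \evec{f} \rangle =
\int_t^{t'} \langle ( 1_{[ 0, t )} g )( s ) \otimes \evec{1_{[ 0, t )} g},
N_s ( f( s ) \otimes \evec{f} ) \rangle \std s = 0,
\]
because $( 1_{[ 0, t )} g )( s ) = 0$ for $s \ge t$; as such vectors are total in $\im E_t$ and $X_t \evec{f} \in \im E_t$ by vacuum adaptedness, it follows that $( X_{t'} - X_t ) \evec{f} \perp X_t \evec{f}$. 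Hence $\psi$ has orthogonal increments, and combined with the vacuum-adapted It\^o isometry $\| ( X_{t'} - X_t ) \evec{f} \|^2 = \int_t^{t'} \| N_s D_s \evec{f} \|^2 \std s$ (the $\mul$-valued form of the identity recorded in the remark following Theorem~\ref{thm:qstint}, \ie \cite[Proposition~2.3]{AtL04}) together with the fact that $N_s D_s \evec{f} = N_s ( f( s ) \otimes \evec{f} )$ for vacuum-adapted $N$, this yields the exact isometry $\psi( t ) = \Gamma( t )$ and so $C_f = 1$.

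In the identity-adapted case Theorem~\ref{thm:weakqipf} applies directly, and with $x = x' = \evec{f}$, $N = N'$ and $\nabla_s \evec{f} = f( s ) \otimes \evec{f}$ it gives
\[
\psi( t ) = \int_0^t \bigl( 2 \Re \langle ( I_\mul \otimes X_s ) \nabla_s \evec{f},
N_s \nabla_s \evec{f} \rangle + \gamma( s )^2 \bigr) \std s.
\]
Estimating the cross term by Cauchy--Schwarz, using $\| ( I_\mul \otimes X_s ) \nabla_s \evec{f} \| = \| f( s ) \| \, \psi( s )^{1/2}$ and then the monotonicity $\psi( s ) \le \psi( t )$ followed by Cauchy--Schwarz in $s$, gives
\[
\psi( t ) \le 2 \psi( t )^{1/2} \Bigl( \int_0^t \| f( s ) \|^2 \std s \Bigr)^{1/2}
\Gamma( t )^{1/2} + \Gamma( t ) \le
2 \| f \| \, \psi( t )^{1/2} \Gamma( t )^{1/2} + \Gamma( t ).
\]
Viewing this as a quadratic inequality in $\psi( t )^{1/2}$ and solving yields $\psi( t )^{1/2} \le \bigl( \| f \| + \sqrt{ 1 + \| f \|^2 } \bigr) \Gamma( t )^{1/2}$, which is exactly the claimed bound with $C_f = \| f \| + \sqrt{ 1 + \| f \|^2 }$.

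The analytic heart of the argument is the first-order behaviour of the gauge integral --- Theorem~\ref{thm:weakqipf} in the identity-adapted case and the vacuum-adapted It\^o isometry in the other --- which is where the genuine quantum stochastic work resides. Granting these, the one real subtlety is the identity-adapted bootstrap: because the cross term there does not vanish (in contrast to the vacuum case, where orthogonality of increments removes it) one must feed the a priori bound $\psi( s ) \le \psi( t )$ back into the estimate and solve the resulting quadratic, and it is precisely this non-vanishing cross term that forces the larger constant $C_f = \| f \| + \sqrt{ 1 + \| f \|^2 }$.
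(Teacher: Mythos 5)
The paper proves this lemma by citation alone --- \cite[Proof of Theorem~18]{Blt04} for the vacuum-adapted case and \cite[Theorem~3.13]{Lin05} for the identity-adapted case --- so your argument must stand by itself, and each half contains a genuine gap.

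In the vacuum-adapted half the proof is, in effect, circular. The orthogonality of increments is proved correctly, but it carries no weight: the load-bearing step is the asserted isometry $\| ( X_{t'} - X_t ) \evec{f} \|^2 = \int_t^{t'} \| N_s D_s \evec{f} \|^2 \std s$, and once that is granted for every $t < t'$ the lemma follows, with equality, simply by taking $t = 0$ and $t' = \infty$; that the orthogonality argument becomes redundant is the tell-tale sign that the substance has been assumed. The result you cite in its support, \cite[Proposition~2.3]{AtL04} (the identity recorded in the remark after Theorem~\ref{thm:qstint}), is the statement $\| x \|^2 = \| E_0 x \|^2 + \int_0^\infty \| D_s x \|^2 \std s$ about \emph{vectors} in Fock space; it says nothing about gauge integrals of operator processes. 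To deduce your isometry from it one must first establish that, almost everywhere, $D_r \bigl( ( X_{t'} - X_t ) \evec{f} \bigr) = 1_{[ t, t' )}( r ) \, N_r \bigl( f( r ) \otimes \evec{f} \bigr)$; this is provable --- polarise the gradient identity against $\evec{g}$, compare with the weak characterisation of the gauge integral, use the vacuum adaptedness of $N_r$ and the adaptedness of $D$ to place both sides in $\mul \otimes \im E_r$, then handle the totality and null-set issues --- but this computation is precisely the quantitative content of the lemma, and it is what \cite[Proof of Theorem~18]{Blt04} actually supplies. Your proof omits it.

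In the identity-adapted half, the application of Theorem~\ref{thm:weakqipf}, the Cauchy--Schwarz steps and the quadratic inequality are sound and do yield the constant $\| f \| + \sqrt{1 + \| f \|^2}$; the flaw is the monotonicity claim $\psi( s ) \le \psi( t )$ for $s \le t$ (already asserted, unjustified, in your opening paragraph), which is false for identity-adapted integrands. Take $\mul = \C$, $N_s := c( s ) \, I_{\mul \otimes \fock}$ with $c := 1_{[ 0, 1 ]} - 1_{( 1, 2 ]}$, and $f := \sqrt{M} \, 1_{[ 0, 2 ]}$. Writing $p( t ) := \int_0^t c( s ) | f( s ) |^2 \std s$ and $q( t ) := \int_0^t c( s )^2 | f( s ) |^2 \std s$, Theorem~\ref{thm:weakqipf} gives
\[
\psi( t ) = e^{\| f \|^2} \bigl( q( t ) + p( t )^2 \bigr),
\]
since here the cross term equals
$2 | f( s ) |^2 c( s ) p( s ) e^{\| f \|^2} =
e^{\| f \|^2} \, \frac{\rd}{\rd s} \, p( s )^2$.
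Hence $\psi( 1 ) = ( M + M^2 ) e^{2M} > 2 M e^{2M} = \psi( 2 )$ whenever $M > 1$: the norm genuinely decreases. (Monotonicity does hold in the vacuum-adapted case, by your orthogonality argument; it fails here because $X_t \evec{f} \not\in \im E_t$, so increments are no longer orthogonal to the running value.) The gap is repairable: replace $\psi( s ) \le \psi( t )$ by $\psi( s ) \le \psi^*( t ) := \sup_{r \le t} \psi( r )$, which is finite because $r \mapsto X_r \evec{f}$ is continuous, hence locally bounded --- a fact supplied by the construction of the integral, not by Theorem~\ref{thm:weakqipf} itself. Your estimate then gives $\psi( t' ) \le 2 \| f \| \, \psi^*( t )^{1/2} \Gamma( t )^{1/2} + \Gamma( t )$ for all $t' \le t$; taking the supremum over $t'$ and solving the quadratic in $\psi^*( t )^{1/2}$ yields the bound for finite $t$, and then for $t = \infty$ as you indicate. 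With this repair your identity-adapted argument is essentially that of \cite[Theorem~3.13]{Lin05}; the vacuum-adapted half still requires the missing gradient computation.
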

\begin{proof}
See \cite[Proof of Theorem~18]{Blt04} and \cite[Theorem~3.13]{Lin05},
respectively.
\end{proof}

\begin{theorem}\label{thm:switch}
If $X = \int N \std \Lambda + P \std A + Q \std A^\dagger + R \std t$
is a regular $\Omega$-semimartingale
then~$\wh{\pi}( X )$ is a regular quantum semimartingale such that
\[
\wh{\pi}( X )_t = %
\int_0^t \wh{\pi}( N - I_\mul \otimes X )_s \std \Lambda_s + %
\int_0^t \wh{\pi}( P )_s \std A_s + %
\int_0^t \wh{\pi}( Q )_s \std A^\dagger_s + %
\int_0^t \wh{\pi}( R )_s \std s
\]
for all $t \in [ 0, \infty ]$.
\end{theorem}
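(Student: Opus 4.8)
The plan is to verify the asserted identity by testing both sides against exponential vectors. Both $\wh{\pi}( X )$ and the process defined by the right-hand side are identity-adapted $\C-\C$ processes---the latter because an identity-adapted quantum stochastic integral is identity adapted---and both vanish at $t = 0$, since $X_0 = 0$; both are also continuous in $t$. It therefore suffices to fix $f$, $g \in \elltwo$ and to show that $t \mapsto \langle \evec{f}, \wh{\pi}( X )_t \evec{g} \rangle$ and the corresponding matrix element of the right-hand side have the same derivative for almost every $t \in \R_+$, after which the fundamental theorem of calculus completes the argument. Throughout, set $\phi( t ) := \exp \langle 1_{[ t, \infty )} f, 1_{[ t, \infty )} g \rangle$, so that $\phi'( t ) = -\langle f( t ), g( t ) \rangle \, \phi( t )$.

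First I would treat the left-hand side. Since $\wh{\pi}( X )_t$ is identity adapted and $X_t$ is vacuum adapted, so that $E_t X_t E_t = X_t$, the definition of $\wh{\pi}$ gives
\[
\langle \evec{f}, \wh{\pi}( X )_t \evec{g} \rangle =
\langle \evec{1_{[ 0, t )} f}, X_t \evec{1_{[ 0, t )} g} \rangle \, \phi( t ) =
\langle \evec{f}, X_t \evec{g} \rangle \, \phi( t ).
\]
Differentiating this product and substituting the regular $\Omega$-semimartingale representation of $\langle \evec{f}, X_t \evec{g} \rangle$ from Definition~\ref{def:vacrsm} produces, for almost every $t$, the four integrand matrix elements associated with $N$, $P$, $Q$ and $R$, each multiplied by $\phi( t )$, plus a correction term $-\langle f( t ), g( t ) \rangle \langle \evec{f}, X_t \evec{g} \rangle \, \phi( t )$ arising from $\phi'$.

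Next I would differentiate the right-hand side by means of the inner-product identities~(\ref{eqn:firstintid}--\ref{eqn:lastintid}) for identity-adapted integrals. The key point is that, for each integrand, combining the identity-adaptedness of $\wh{\pi}( \, \cdot \, )_t$ with the vacuum-adaptedness of $N$, $P$, $Q$ and $R$ shows that applying $\wh{\pi}$ reproduces the corresponding un-hatted matrix element times the factor $\phi( t )$; for example $\langle f( t ) \otimes \evec{f}, \wh{\pi}( N )_t ( g( t ) \otimes \evec{g} ) \rangle = \langle f( t ) \otimes \evec{f}, N_t ( g( t ) \otimes \evec{g} ) \rangle \, \phi( t )$. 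The sole additional contribution comes from the gauge integrand: writing $\wh{\pi}( N - I_\mul \otimes X )_t = \wh{\pi}( N )_t - I_\mul \otimes \wh{\pi}( X )_t$ and reusing the left-hand-side computation, the term $-I_\mul \otimes \wh{\pi}( X )_t$ contributes exactly $-\langle f( t ), g( t ) \rangle \langle \evec{f}, X_t \evec{g} \rangle \, \phi( t )$, which matches the $\phi'$ correction found above. The remaining terms then coincide one by one, so the two derivatives agree for almost every $t$.

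I expect the gauge term to be the crux. One must first set up $\wh{\pi}$ on $\mul-\mul$, $\mul-\C$ and $\C-\mul$ processes, check that it respects the tensor factorisation so that $\wh{\pi}( I_\mul \otimes X ) = I_\mul \otimes \wh{\pi}( X )$, and then recognise that the $-I_\mul \otimes X$ correction is exactly what absorbs the time-dependence of the exponential factor $\phi$. The remaining, more routine, task is to confirm that $\wh{\pi}( X )$ really is a regular quantum semimartingale: because the martingale part of $X$ is closed, so that $\| M_t \| \le \| M_\infty \|$ uniformly, and the FV part satisfies $\int_0^\infty \| R_s \| \std s < \infty$, one has $\sup_{t} \| X_t \| < \infty$; hence $\wh{\pi}( N - I_\mul \otimes X )$ has finite $\| \cdot \|_\infty$, while $\wh{\pi}( P )$, $\wh{\pi}( Q )$ and $\wh{\pi}( R )$ inherit the integrability conditions of Definition~\ref{def:vacrsm}, so the integrals define the required bounded martingale part.
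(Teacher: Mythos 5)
Your proof is correct in substance, but it takes a genuinely different route from the paper, which offers no direct argument at all: the paper's ``proof'' is a citation of Corollaries~31 and~40 of \cite{Blt04}, plus the remark that the extension to non-separable $\mul$ is straightforward. Your self-contained verification---test both sides against exponential vectors, differentiate, and match integrands almost everywhere---is the natural direct argument, and its crux is sound: with $\phi( t ) := \langle \evec{1_{[ t, \infty )} f}, \evec{1_{[ t, \infty )} g} \rangle$ one has $\phi'( t ) = -\langle f( t ), g( t ) \rangle \, \phi( t )$, and this correction is precisely the contribution of the $-I_\mul \otimes \wh{\pi}( X )_t$ part of the gauge integrand, since $\langle f( t ) \otimes \evec{f}, ( I_\mul \otimes \wh{\pi}( X )_t ) ( g( t ) \otimes \evec{g} ) \rangle = \langle f( t ), g( t ) \rangle \, \langle \evec{f}, X_t \evec{g} \rangle \, \phi( t )$; vacuum adaptedness of $N$, $P$, $Q$, $R$ and $X$ makes every other hatted matrix element collapse to its unhatted version times $\phi( t )$, exactly as you say. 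What the citation buys the paper is that the technical infrastructure (the meaning of $\wh{\pi}$ on $\mul_1-\mul_2$ processes, measurability of the hatted integrands, boundedness) is handled once and for all in \cite{Blt04}; what your argument buys is independence from that reference.

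One step does need rerouting. You justify the claim that $\wh{\pi}( X )$ is a regular quantum semimartingale by observing that the hatted integrands satisfy the norm and integrability conditions of Definition~\ref{def:vacrsm}, ``so the integrals define the required bounded martingale part.'' That inference is invalid: as the appendix explicitly warns, identity-adapted integration need not preserve boundedness, so integrability of $\wh{\pi}( N - I_\mul \otimes X )$, $\wh{\pi}( P )$ and $\wh{\pi}( Q )$ alone does not make their integrals bounded operators. The fix is immediate from the identity you have just proven: on $\evecs$ the martingale part equals $\wh{\pi}( X )_t - \int_0^t \wh{\pi}( R )_s \std s$, and both of these terms are bounded, since $\| \wh{\pi}( X )_t \| \le \| X_t \|$ and the time integral has norm at most $\int_0^t \| R_s \| \std s$; hence the martingale part extends to an element of $\bop{\fock}{}$, which is what the definition of a regular quantum semimartingale requires. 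A smaller point of the same kind: when you invoke the fundamental theorem of calculus, you should note that both matrix elements are absolutely continuous (each is an integral of a locally integrable function, the left-hand side multiplied by the absolutely continuous factor $\phi$), since continuity together with almost-everywhere differentiability is not sufficient.
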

\begin{proof}
See \cite[Corollaries~31 and~40]{Blt04}; the extension to a
non-separable multiplicity space $\mul$ is straightforward.
\end{proof}

\begin{notation}\label{not:poisson}
Let $\mul = \C$, let $\nu = ( \nu_t )_{t \in \R_+}$ be a standard
Poisson process on the probability space $\probsp$ and let
$U_P : L^2\probsp \to \fock$ be the isometric isomorphism such
that~the closure of
$N_t := \Lambda_t + A_t + A^\dagger_t + t I$ equals $ U_P \nu_t U_P^*$
for all $t \in \R_+$, where $\wh{\nu_t}$ acts by multiplication
by~$\nu_t$, and $U_P 1 = \evec{0}$ \cite[Theorems~6.1--2]{HuP84}.
Recall that
\[
\sexp{f} := U_P^* \evec{f} = %
1 + \int_0^\infty f( t ) \sexp{1_{[ 0, t )} f} \std \chi_t %
\qquad \text{for all } f \in L^2( \R_+ ),
\]
where $\chi$ is the normal martingale such that $\chi_t = \nu_t - t$
for all $t \in \R_+$ \cite[Section~II.1]{Att98}.
\end{notation}

\begin{lemma}\label{lem:poisint}
In the setting of Notation~\ref{not:poisson}, let $\phi$ be a
bounded process on $\probsp$ adapted to the Poisson filtration and let
$F_t = U_P \wh{\phi}_t U_P^* \in \bop{\fock}{}$ for all $t \in \R_+$,
where $\wh{\phi}_t$ acts as multiplication by~$\phi_t$. Then
\begin{equation}\label{eqn:poisson}
U_P \int_0^\infty \phi_t \std \chi_t U_P^* = %
\int_0^\infty F_t \std ( N_t - t ) \quad \text{on } \evecs.
\end{equation}
\end{lemma}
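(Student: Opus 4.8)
The plan is to prove the identity on the total set of exponential vectors by comparing matrix elements. Both sides of \eqref{eqn:poisson} are operators defined on the dense domain $\evecs$, and $\{ \evec{g} : g \in \elltwo \}$ is total, so it suffices to show that
\[
\langle \evec{g}, \int_0^\infty F_t \std ( N_t - t ) \, \evec{f} \rangle =
\langle \evec{g}, U_P \int_0^\infty \phi_t \std \chi_t \, U_P^* \, \evec{f} \rangle
\]
for all $f$, $g \in \elltwo = L^2( \R_+ )$ (recall $\mul = \C$). By the isometry of $U_P$ and the relation $U_P \sexp{h} = \evec{h}$, the right-hand side is $\langle \sexp{g}, ( \int_0^\infty \phi_t \std \chi_t ) \sexp{f} \rangle = \expn[ \overline{\sexp{g}} ( \int_0^\infty \phi_t \std \chi_t ) \sexp{f} ]$.

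First I would treat the left-hand side. Since $N_t - t = \Lambda_t + A_t + A_t^\dagger$ and $F$ is identity adapted, I would split the integral into a gauge, an annihilation and a creation integral and apply the identity-adapted forms of \eqref{eqn:firstintid}--\eqref{eqn:lastintid}. With $\mul = \C$ these three inner-product identities combine to give
\[
\langle \evec{g}, \int_0^\infty F_s \std ( N_s - s ) \, \evec{f} \rangle =
\int_0^\infty \bigl( \overline{g( s )} f( s ) + f( s ) + \overline{g( s )} \bigr) \langle \evec{g}, F_s \evec{f} \rangle \std s,
\]
the three summands being the contributions of $\std \Lambda$, $\std A$ and $\std A^\dagger$ respectively. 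Using $F_s = U_P \wh{\phi}_s U_P^*$ and the isometry of $U_P$ once more, $\langle \evec{g}, F_s \evec{f} \rangle = \expn[ \overline{\sexp{g}} \, \phi_s \, \sexp{f} ]$, so the lemma reduces to the purely classical identity
\[
\expn\Bigl[ \overline{\sexp{g}} \Bigl( \int_0^\infty \phi_s \std \chi_s \Bigr) \sexp{f} \Bigr] =
\int_0^\infty \bigl( \overline{g( s )} f( s ) + f( s ) + \overline{g( s )} \bigr) \expn[ \overline{\sexp{g}} \, \phi_s \, \sexp{f} ] \std s.
\]

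To prove this I would use linearity in $\phi$ together with an approximation argument, reducing to elementary integrands $\phi_s = \eta \, 1_{( a, b ]}( s )$ with $\eta$ bounded and $\salg_a$-measurable; bounded adapted processes are approximated by such, and both sides pass to the limit by dominated convergence. For elementary $\phi$ the integral is $\eta ( \chi_b - \chi_a )$ and $\expn[ \overline{\sexp{g}} \, \phi_s \, \sexp{f} ]$ is constant in $s$ on $( a, b ]$, so the right-hand side is $\expn[ \eta \, \overline{\sexp{g}} \, \sexp{f} ] \int_a^b ( \overline{g} f + f + \overline{g} ) \std s$. Factoring $\overline{\sexp{g}} \, \sexp{f}$ across the disjoint intervals $[ 0, a )$, $( a, b ]$ and $( b, \infty )$ using the independence of the increments of $\nu$ then reduces the claim to the single-interval identity
\[
\expn[ ( \chi_b - \chi_a ) Q_b ] = \expn[ Q_b ] \int_a^b ( \overline{g} f + f + \overline{g} ) \std s, \qquad
Q_b := \overline{\sexp{1_{( a, b ]} g}} \, \sexp{1_{( a, b ]} f}.
\]
This last identity I would derive from the Dol\'eans equation $\std \sexp{1_{[ 0, \cdot )} f} = f \, \sexp{1_{[ 0, \cdot )} f} \std \chi$ of Notation~\ref{not:poisson}, its conjugate for $g$, the It\^o product rule and the structure equation $\std [ \chi ]_s = \std \chi_s + \std s$ for the compensated Poisson process; equivalently, for piecewise-constant $f$ and $g$ it follows at once from the Poisson generating function $\expn[ z^{\nu_t} ] = e^{t ( z - 1 )}$, and the general case then follows by approximating $f$, $g \in L^2( \R_+ )$ by simple functions.

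The main obstacle is this classical identity, and in particular the cross term $\overline{g} f$: in the quantum computation it is exactly the contribution of the gauge integral $\int F \std \Lambda$, whereas on the classical side it arises from the quadratic-variation term $\overline{g} f \, \sexp{1_{[ 0, \cdot )} f} \, \overline{\sexp{1_{[ 0, \cdot )} g}} \std [ \chi ]$, whose $\std s$ part must be reconciled with the martingale $\std \chi$ part. The factorisation over independent increments (or the generating-function computation) is what makes the coefficient $\overline{g} f + f + \overline{g}$ emerge cleanly and match the sum of the three quantum stochastic integrals; verifying that the approximations of $\phi$, $f$ and $g$ are compatible with both sides is routine but is where the measure-theoretic care is needed.
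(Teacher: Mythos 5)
Your proposal is correct, and it takes a genuinely different route to the heart of the lemma. Both you and the paper make the same initial reduction: since $N_t - t = \Lambda_t + A_t + A^\dagger_t$ and $F$ is identity adapted, testing against exponential vectors turns the quantum side into
\[
\int_0^\infty \bigl( \overline{g( s )} f( s ) + f( s ) + \overline{g( s )} \bigr) \,
\expn\bigl[ \overline{\sexp{g}} \, \phi_s \, \sexp{f} \bigr] \std s,
\]
so everything rests on the classical identity you isolate. It is in proving that identity that you diverge. The paper argues by It\^o calculus: it expands $\overline{\sexp{f}}$ and $\sexp{g}$ via their Dol\'eans equations, applies the It\^o product formula together with the structure equation $[ \chi ]_t = \chi_t + t$, arrives at an integral equation for the truncated expectation $\alpha_t$, and solves it with the integrating factor $\exp\bigl( \int_t^\infty \overline{f( s )} g( s ) \std s \bigr)$; no approximation of $\phi$, $f$ or $g$ is required. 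You instead return to the construction of the stochastic integral: elementary predictable integrands $\eta 1_{( a, b ]}$, factorisation of the expectation over independent Poisson increments using the multiplicativity of $\ssexp$ over disjoint intervals, and the generating function $\expn[ z^{\nu_t} ] = \mathrm{e}^{t( z - 1 )}$ for the single-interval identity. Your route is more elementary and makes the coefficient conceptually transparent --- $\overline{g} f + f + \overline{g} = ( 1 + f )( 1 + \overline{g} ) - 1$ is exactly the $z - 1$ of the generating function --- whereas in the paper it emerges from the bookkeeping of It\^o correction terms. The price is that the work migrates into the limiting arguments, where some (routine) care is needed: the factor $\chi_b - \chi_a$ is unbounded, so passing from simple to general $f$, $g$ in your single-interval identity needs convergence of the products $\overline{\sexp{1_{( a, b ]} g}} \, \sexp{1_{( a, b ]} f}$ in better than $L^1\probsp$ (available, since Poisson stochastic exponentials have moments of all orders), and $f + \overline{g}$ is only square-integrable, so the $\std s$-integrals on both sides are best handled on finite time intervals first --- exactly the situation in the paper's application, where $\phi$ is supported in $[ 0, t ]$. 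These are repairs of detail, not of the idea.
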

\begin{proof}
For all $f \in L^2( \R )$ and $t \in \R_+$, let
\[
E_t \sexp{f} := \sexp{1_{[ 0, t )} f} \qquad \text{and} \qquad %
D_t \sexp{f} := f( t ) E_t \sexp{f}.
\]
Recall that the quadratic variation $[ \chi ]_t = \chi_t + t$ for all
$t \in \R_+$.

Thus, if~$f$, $g \in L^2( \R_+ )$ then
\begin{align*}
&\expn\Bigl[ \overline{\sexp{f}} %
\int_0^\infty \phi_t \std \chi_t \, \sexp{g} \Bigr] \\[1ex]
 & = \expn\Bigl[ %
\Bigl( 1 + \int_0^\infty D_t \overline{\sexp{f}} \std \chi_t \Bigr) %
\int_0^\infty \phi_t \std \chi_t \, \sexp{g} \Bigr] \\[1ex]
 & = \expn\Bigl[ \Bigl( \int_0^\infty \phi_t \std \chi_t + %
\int_0^\infty D_t \overline{\sexp{f}} \int_0^t \phi_s \std \chi_s %
\std \chi_t \\
 & \hspace{6em} + \int_0^\infty %
\int_0^t D_s \overline{\sexp{f}} \std \chi_s \, \phi_t \std \chi_t + %
\int_0^\infty D_t \overline{\sexp{f}} \phi_t \std ( \chi_t + t ) %
\Bigr) \Bigl( 1 + %
\int_0^\infty D_t \sexp{g} \std \chi_t \Bigr) \Bigr] \\[1ex]
 & = \int_0^\infty \expn\bigl[ %
D_t \overline{\sexp{f}} \phi_t + \phi_t D_t \sexp{g} + %
D_t \overline{\sexp{f}} \int_0^t \phi_s \std \chi_s \, %
D_t \sexp{g} + \int_0^t D_s \overline{\sexp{f}} \std \chi_s \, %
\phi_t D_t \sexp{g} \\
 & \hspace{19em} + D_t \overline{\sexp{f}} \phi_t D_t \sexp{g} + %
D_t \overline{\sexp{f}} \phi_t %
\int_0^t D_s \sexp{g} \std \chi_s \bigr] \std s \\[1ex]
 & = \int_0^\infty \expn\bigl[ %
D_t \overline{\sexp{f}} \phi_t E_t \sexp{g} + %
E_t \overline{\sexp{f}} \phi_t D_t \sexp{g} + %
+ D_t \overline{\sexp{f}} \phi_t D_t \sexp{g} + %
D_t \overline{\sexp{f}} \smash[t]{\int_0^t} %
\phi_s \std \chi_s D_t \sexp{g} \bigr] \std t \\[1ex]
 & = \int_0^\infty \expn\bigl[ %
E_t \overline{\sexp{f}} \Bigl( \overline{f( t )} \phi_t + %
\phi_t g( t ) + \overline{f( t )} \Bigl( \phi_t + %
\int_0^t \phi_s \std \chi_s \Bigr) g( t ) \Bigr) %
E_t \sexp{g} \bigr] \std t.
\end{align*}
Replacing $f$, $g$ and $\phi$ by $1_{[ 0, t ]} f$, $1_{[ 0, t ]} g$
and $1_{[ 0, t ]} \phi$, respectively, it follows that
\begin{align*}
\alpha_t & := \expn\Bigl[ \overline{\sexp{1_{[ 0, t ]} f}} %
\int_0^t \phi_s \std \chi_s \, \sexp{1_{[ 0, t ]} g} \Bigr] \\[1ex]
 & \hphantom{:}= \int_0^t \bigl( \overline{f( s )} + g( s ) + %
\overline{f( s )} g( s ) \bigr) %
\expn[ E_s \overline{\sexp{f}} \phi_s E_s \sexp{g} ] + %
\overline{f( s )} g( s ) \alpha_s \std s,
\end{align*}
so
\[
\frac{\rd}{\rd t}\Bigl( \alpha_t %
\exp( \int_t^\infty \overline{f( s )} g( s ) \std s ) \Bigr) = %
\Bigl( \overline{f( t )} + g( t ) + \overline{f( s )} g( s ) \Bigr) %
\expn\Bigl[ \overline{\sexp{f}} \phi_t \sexp{f} \Bigr]
\]
and
\[
\expn\Bigl[ \overline{\sexp{f}} %
\int_0^\infty \phi_t \std \chi_t \, \sexp{g} \Bigr] = %
\int_0^\infty \Bigl( \overline{f( t )} + g( t ) + %
\overline{f( t )} g( t ) \Bigr) %
\expn\Bigl[ \overline{\sexp{f}} \phi_t \sexp{g} \Bigr] \std t.
\qedhere
\]
\end{proof}

\section*{References}


\begin{thebibliography}{99}

\bibitem{AcS89}
\textsc{L.~Accardi} \& \textsc{K.~Sinha},
Quantum stop times,
in
\textit{Quantum Probability and Applications~IV}
(L.~Accardi \& W.~von~Waldenfels, eds.), 68--72,
Lecture Notes in Math.~1396, Springer, Berlin, 1989.

\bibitem{App88}
\textsc{D.~Applebaum},
Stopping unitary processes in Fock space,
\textit{Publ.\ RIMS Kyoto Univ.}~24 (1988), 697--705.

\bibitem{Att98}
\textsc{S.~Attal},
Classical and quantum stochastic calculus,
in
\textit{Quantum Probability Communications~X}
(R.L.~Hudson \& J.M.~Lindsay, eds.), 1--52,
World Scientific, Singapore, 1998.

\bibitem{AtC04}
\textsc{S.~Attal} \& \textsc{A.~Coquio},
Quantum stopping times and quasi-left continuity,
\textit{Ann.\ Inst.\ H.\ Poincar\'e Probab.\ Statist.}~40 (2004),
no.4, 497--512.

\bibitem{AtL04}
\textsc{S.~Attal} \& \textsc{J.M.~Lindsay},
Quantum stochastic calculus with maximal operator domains,
\textit{Ann.\ Probab.}~32 (2004), no.1A, 488--529.

\bibitem{AtP95}
\textsc{S.~Attal} \& \textsc{K.R.~Parthasarathy},
Strong Markov processes and the Dirichlet problem in von Neumann
algebras,
in
\textit{Stochastic analysis and applications (Powys, 1995)}
(I.M.~Davies, A.~Truman \& K.D.~Elworthy, eds.), 53--75, World
Scientific, Singapore, 1996.

\bibitem{AtP96}
\textsc{S.~Attal} \& \textsc{K.R.~Parthasarathy},
Strong Markov processes and the Dirichlet problem in $C^*$-algebras,
preprint no.357, Institut Fourier, Universit\'e Grenoble Alpes, 1996.

\bibitem{AtS98}
\textsc{S.~Attal} \& \textsc{K.B.~Sinha},
Stopping semimartingales on Fock space,
in
\textit{Quantum Probability Communications~X}
(R.L.~Hudson \& J.M.~Lindsay, eds.), 171--185,
World Scientific, Singapore, 1998.

\bibitem{BaL86}
\textsc{C.~Barnett} \& \textsc{T.~Lyons},
Stopping noncommutative processes,
\textit{Math.\ Proc.\ Cambridge Philos.\ Soc.}~99 (1986), no.1,
151--161.

\bibitem{BaT87}
\textsc{C.~Barnett} \& \textsc{B.~Thakrar}
Time projections in a von~Neumann algebra,
\textit{J.\ Operator Theory}~18 (1987), no.1, 19--31.

\bibitem{BaT90}
\textsc{C.~Barnett} \& \textsc{B.~Thakrar},
A noncommutative random stopping theorem,
\textit{J.\ Funct.\ Anal.}~88 (1990), no.2, 342--350.

\bibitem{BaW90}
\textsc{C.~Barnett} \& \textsc{I.F.~Wilde},
Random times and time projections,
\textit{Proc.\ Amer.\ Math.\ Soc.}~110 (1990), no.2, 425--440.

\bibitem{BaW93}
\textsc{C.~Barnett} \& \textsc{I.~Wilde},
Random times, predictable processes and stochastic integration in
finite von~Neumann algebras,
\textit{Proc.\ London Math.\ Soc.}~(\textit{3})~67 (1993), no.2,
355--383.

\bibitem{Blt01}
\textsc{A.C.R.~Belton},
Quantum $\Omega$-semimartingales and stochastic evolutions,
\textit{J.\ Funct.\ Anal.}~187 (2001), no.1, 94--109.

\bibitem{Blt04}
\textsc{A.C.R.~Belton},
An isomorphism of quantum semimartingale algebras,
\textit{Q.\ J.\ Math.}~55 (2004), no.2, 135--165.

\bibitem{BeS14}
\textsc{A.C.R.~Belton} \& \textsc{K.B.~Sinha},
Stopping the CCR flow and its isometric cocycles,
\textit{Q.\ J.\ Math.}~65 (2014), no.4, 1145--1164.

\bibitem{BeS16}
\textsc{A.C.R.~Belton} \& \textsc{K.B.~Sinha},
The cocycle identity holds under stopping,
\textit{Q.\ J.\ Math.}~68 (2017), no.3, 817--830.

\bibitem{BhP95}
\textsc{B.V.R.~Bhat} \& \textsc{K.R.~Parthasarathy},
Markov dilations of nonconservative dynamical semigroups and a quantum
boundary theory,
\textit{Ann.\ Inst.\ H.\ Poincar\'e Probab.\ Statist.}~31 (1995),
no.4, 601--651.

\bibitem{Coq06}
\textsc{A.~Coquio},
The optional stopping theorem for quantum martingales,
\textit{J.\ Funct.\ Anal.}~238 (2006), no.1, 149--180.

\bibitem{Hud79}
\textsc{R.L.~Hudson},
The strong Markov property for canonical Wiener processes,
\textit{J.\ Funct.\ Anal.}~34 (1979), no.2, 266--281.

\bibitem{Hud07}
\textsc{R.L.~Hudson},
Stop times in Fock space quantum probability,
\textit{Stochastics}~79 (2007), no.3--4, 383--391.

\bibitem{Hud10}
\textsc{R.L.~Hudson},
Stopping Weyl processes,
in
\textit{Recent development in stochastic dynamics and stochastic
analysis}
(J.~Duan, S.L.~Luo and C.~Wang, eds.), 185--193,
World Scientific, Singapore, 2010.

\bibitem{HuP84}
\textsc{R.L.~Hudson} \& \textsc{K.R.~Parthasarathy},
Quantum Ito's formula and stochastic evolutions,
\textit{Comm.\ Math.\ Phys.}~93 (1984), no.3, 301--323.

\bibitem{Lin91}
\textsc{J.M.~Lindsay},
Independence for quantum stochastic integrators,
in
\textit{Quantum Probability \& Related Topics~VI}
(L.~Accardi, ed.), 325--332,
World Scientific, Singapore, 1991.

\bibitem{Lin05}
\textsc{J.M.~Lindsay},
Quantum stochastic analysis --- an introduction,
in
\textit{Quantum Independent Increment Processes I}
(M.~Sch\"urmann \& U.~Franz, eds.), 181--271,
Lecture Notes in Math.~1865, Springer, Berlin, 2005.

\bibitem{Luc05}
\textsc{A.~{\L}uczak},
Structure of the time projection for stopping times in von Neumann
algebras,
\textit{Internat.\ J.\ Theoret.\ Phys.}~44 (2005), no.7, 909--917.

\bibitem{Mey86}
\textsc{P.-A.~Meyer},
\'El\'ements de probabilit\'es quantiques~IV. Probabilit\'es sur
l'espace de Fock,
in
\textit{S\'eminaire de Probabilit\'es~XX}
(J.~Az\'ema \& M.~Yor, eds.), 249--285,
Lecture Notes in Math.~1204, Springer, Berlin, 1986.

\bibitem{Mey87}
\textsc{P.-A.~Meyer},
\'El\'ements de probabilit\'es quantiques~VIII. Temps d'arr\^et sur
l'espace de Fock,
in
\textit{S\'eminaire de Probabilit\'es~XXI}
(J~Az\'ema, P.-A.~Meyer \& M.~Yor, eds.), 63--78,
Lecture Notes in Math.~1247, Springer, Berlin, 1987.

\bibitem{Par03}
\textsc{K.R.~Parthasarathy},
Quantum probability and strong quantum Markov processes,
in
\textit{Quantum Probability Communications~XII}
(S.~Attal \& J.M.~Lindsay, eds.), 59--138,
World Scientific, Singapore, 2003

\bibitem{PaS87}
\textsc{K.R.~Parthasarathy} \& \textsc{K.B.~Sinha},
Stop times in Fock space stochastic calculus,
\textit{Probab.\ Theory Related Fields} 75 (1987), no.3, 317--349.

\bibitem{Sav88}
\textsc{J.-L.~Sauvageot},
First exit time: a theory of stopping times in quantum processes,
in
\textit{Quantum Probability and Applications~III}
(L.~Accardi \& W.~von~Waldenfels, eds.), 285--299,
Lecture Notes in Math.~1303, Springer, Berlin, 1988.

\bibitem{Sin03}
\textsc{K.B.~Sinha},
Quantum stop times,
in
\textit{Quantum Probability Communications~XII}
(S.~Attal \& J.M.~Lindsay, eds.), 195--207,
World Scientific, Singapore, 2003.

\end{thebibliography}
\end{document}